\numberwithin{equation}{section}
\newcommand{\abs}[1]{|#1|}
\newcommand{\BMO}[0]{\operatorname{BMO}}
\theoremstyle{plain}
\newtheorem{thm}{Theorem}[section]
\newtheorem{lem}[thm]{Lemma}
\newtheorem{prop}[thm]{Proposition}
\theoremstyle{definition}
\newtheorem{defn}[thm]{Definition}
\theoremstyle{remark}
\newtheorem{rem}[thm]{Remark}
\title{Off-diagonal bloom weighted estimates for bilinear commutators }
\author{Yunan Zeng}
\address{Center for Applied Mathematics, Tianjin University, Weijin Road 92, 300072 Tianjin, China}
\email{zynn@tju.edu.cn}
\subjclass[2020]{42B20, 42B25}
\keywords{Bloom weight bounds, iterated commutators, sparse operators}
\begin{document}
	
	\allowdisplaybreaks
	
	\begin{abstract}
		We prove the off-diagonal estimates of the bilinear iterated commutators in the two-weight setting. The upper bound is established via sparse domination, and the lower bound is proved by  the median method. Our methods are so flexible so that it can be easily extended to the multilinear scenario.
	\end{abstract}

	\maketitle
	
	\section{Introduction}
	In recent years the topic of commutators has attracted a lot of attention, due to its nice applications such as elliptic PDEs and Hardy spaces (see e.g. \cite{MR1088476, MR412721}). Given a linear operator $T$ and a locally integrable  function  $b$, recall that the commutator of $T$ and $b$ is defined as
	\begin{equation}
		[b,T]f=bT(f)-T(bf).\nonumber
	\end{equation}
	The problem that we are concerned with is the characterization of the boundedness of $[b, T]$ in terms of certain membership of the function $b$. When $T$ is a Calder\'on-Zygmund operator, the candidate of $b$ is the so-called bounded mean oscillation function space, which is usually denoted by $\BMO$. Recall that we say $b\in \BMO$ if 
	\[
	\|b\|_{\BMO}:= \sup_Q \frac 1{|Q|}\int_Q \big|b-\langle b\rangle_Q\big|<\infty,
	\]where $\langle b\rangle_Q= \frac 1{|Q|}\int_Q b$. 
	Initially, Nehari \cite{MR82945} proposed a complex analysis method to attack this problem for the Hilbert transform. In 1976, Coifman, Rochberg and Weiss \cite{MR412721} studied this problem for the Riesz transform using real analysis method. They provided the following commutator lower and upper bound
	\begin{equation}\label{key 11}
		\|b\|_{\BMO(\mathbb{R}^{n})}\lesssim\sum_{j=1}^{n} \|[b,\mathcal{R}_j]\|_{L^{p}(\mathbb{R}^n)\to L^{p}(\mathbb{R}^n)}\lesssim \|b\|_{\BMO(\mathbb{R}^n)}.
	\end{equation}
	The lower bound was also addressed by Janson \cite{MR524754} and Uchiyama \cite{MR467384} independently. They extended the formula (\ref{key 11}) to a wider class of singular integrals that includes any single Riesz transform. Janson's proof also gave the  off-diagonal characterization of the boundedness of the commutator. When $1<p<q<\infty$, there holds that
	\begin{equation}
		\|[b,T]\|_{L^p\to L^q}\simeq \|b\|_{\dot{C}^{0,\alpha}}:=\sup_{x\neq y}\frac{\abs{b(x)-b(y)}}{\abs{x-y}^{\alpha}},\quad \frac{\alpha}{n}=\frac{1}{p}-\frac{1}{q}.\nonumber
	\end{equation}
	The case $p>q$ was left open there. It was until 2021, Hyt\"onen finally completed the picture in \cite{MR4338459}, providing a full characterization of $\|[b,T]\|_{L^p\to L^q}$, where $T$ is a non-degenerate Calder\'on-Zygmund operator. In \cite{MR4338459}, it is defined that a Calder\'on-Zygmund operator $T$ is called non-degenerate if its kernel $K$ satisfies that, for every $y\in \mathbb{R}^n$ and $r>0$, there exists $x\in B(y,r)^{c}$ with
	$$\abs{K(x,y)}\geqslant \frac{1}{c_0r^d}.$$
	The main contribution of this paper is that it establishes a lower bound estimate by median and approximate weak factorization methods.

	Now an interesting problem is to study the two weight counterpart of the above question. That is, how to characterize the boundedness of $[b, T]$ from $L^p(\mu)$ to $L^p(\lambda)$, where $\mu$, $\lambda$ are non-negative locally integrable functions, and the candidate for the function $b$ is $\BMO_\nu$ with suitable $\nu$. 
	Here, the weighted $\BMO$ space,  $\BMO_\nu$ with $\nu\in A_{\infty}$ was introduced by Muckenhoupt and Wheeden \cite{MR399741}, recall that $b\in \BMO_\nu$ if 
	\[
	\|b\|_{\BMO_\nu}:= \sup_Q \frac 1{\nu(Q)}\int_Q \big|b-\langle b\rangle_Q\big|<\infty. 
	\]
	This topic was initiated by Bloom \cite{MR805955} in 1985, who considered this question for the Hilbert transform. He proved that
	$$\big\|[b,H]\big\|_{L^p(\mu)\to L^p(\lambda)}\simeq \|b\|_{\BMO_\nu},$$
	where $1<p<\infty, \mu,\lambda\in A_p, \nu=(\mu/\lambda)^{\frac{1}{p}}$. The weight $\nu$ is referred as the Bloom weight. Segovia and Torrea \cite{MR1074151} extended Bloom's upper bound to general Calderón-Zygmund operators. This issue regained attention because  Holmes, Lacey and Wick
	 \cite{MR3451366, MR1029684} revisited the result using the modern technology of dyadic representation theorems. They also extended Bloom's lower bound
	 result to the Riesz transform.
     Only shortly after, Lerner, Ombrosi, and Rivera-R\'ios \cite{MR3695871} gave a quantitative upper bound estimate using sparse domination, which is a simpler proof. In particular, they provided the following pointwise sparse domination principle 
	$$\abs{[b,T]f}\lesssim \sum_{Q\in \mathcal{S}}\abs{b-\langle b \rangle_Q}\big\langle \abs{f} \big\rangle_Q \chi_Q+\sum_{Q\in \mathcal{S}}\big\langle\abs{b-\langle b\rangle_Q}\abs{f}\big\rangle_Q\chi_Q,$$
	where $\mathcal{S}$ is a sparse family (see Section \ref{sec:pre} for the definition). The Bloom's lower bound for general case, i.e. non-degenerate singular integrals operators, was proved by Hyt\"onen \cite{MR4338459}.
	Meanwhile, the off-diagonal boundedness of the commutator were also developed. The weighted analogies of the off-diagonal estimates were given by \cite{MR4587286} for exponents  $1<p< q<\infty$ and  \cite{Hanninen2023WeightedL} for exponents  $1<q<p<\infty$ with $T$ was a non-degenerate Calder\'on-Zygmund operator.
	Recently, Lerner, Lorist and Ombrosi \cite{MR4718669} further explored the $L^p$ to $L^q$ quantitative Bloom weighted estimates for iterated commutators of a class of operators which are more general than the Calder\'on-Zygmund setting, where $1<p,q<\infty$. Here the ($k$th-order) iterated commutators are defined inductively as
	$$\mathcal{C}_{b}^k(T):=[b,\mathcal{C}_b^{k-1}(T)],\quad \mathcal{C}_b^1(T):=[b,T],\qquad k\in \mathbb{N}_+. $$
	They also developed a sparse domination principle in form sense, that is 
	$$\big\langle \mathcal{C}_b^{k}(T)f,g\big\rangle \lesssim  \sum_{Q\in \mathcal{S}}\big\langle \abs{b-\langle b\rangle_Q}^k\abs{f}\big\rangle_{r,Q}\big\langle \abs{g}\big\rangle_{s^\prime,Q}\abs{Q}+\sum_{Q\in \mathcal{S}} \big\langle\abs{f}\big\rangle_{r,Q}\big\langle \abs{b-\langle b\rangle_Q}^k\abs{g}\big\rangle_{s^{\prime},Q} \abs{Q},$$
	where $\mathcal{S}$ is a sparse family. For more about the linear theory, we refer the readers to \cite{MR3926046, MR4167272} and the references therein.
	
	It is also natural to extend the definition of iterated commutators to the multilinear scenario. Given an $n$-linear operator $T$ and $b \in \ L^{1}_{\rm{loc}}(\mathbb{R}^n)$, we define 
	$$[b,T]_1(f_1,f_2,\ldots, f_n):=bT(f_1,f_2,\ldots, f_n)-T(bf_1,f_2,\ldots, f_n),$$
	one my define $[b, T]_j$ for other values of $j\in \{1,\ldots, n\}$ in a similar way.
	It is easy to see that $[b,[b,T]_i]_j=[b,[b,T]_j]_i$. As a result we only  consider the  iterated commutator 
	$$  \mathcal{C}^{k_1}_{b}(T)=[b,\mathcal{C}_b^{k_1-1}(T)]_1,\quad  \mathcal{C}^{1}_{b}(T)=[b, T]_1. $$
	In 2009,
	Lerner et al. \cite{MR2483720}  proved the commutators estimate in the one-weight situation, namely 
	$$[b,T]_1:L^{p_1}(\omega_1^{p_1})\times \dots \times L^{p_n}(\omega_n^{p_n})\to L^{p}(\prod_{i=1}^{n}\omega_i^{p}),$$
	where $(\omega_1,\dots,\omega_n)\in A_{\vec{p}}$. In \cite{MR3874959}, Kunwar and Ou investigated the boundedness of multilinear commutators in two-weight setting. However, they did not use the genuinely multilinear weights.
	Li \cite{MR4453692} improved this result to genuinely multilinear weights. For further reading on multilinear theory, we refer the readers to \cite{MR3573728,MR4167272, MR4647944}.
	
	Motivated by \cite{MR4718669} and \cite{MR4453692}, in this paper we extend the off-diagonal two weight estimates for the linear non-iterated commutators to the case of bilinear iterated commutators.
	Our main result states as the following: 
	\begin{thm}\label{thm 10}
		Let  $ 1\leqslant r_1 <p_1,q_1< \infty, 1\leqslant r_2 <p_2 <\infty, 1<p,q<s\leqslant \infty$, satisfy 
		$ {1}/{p}={1}/{p_1}+{1}/{p_2}, {1}/{q}={1}/{q_1}+{1}/{p_2}, $
		and $k_1\in \mathbb{N}$. Let $T$ be a bilinear operator and $b\in L_{\rm{loc}}^1(\mathbb{R}^n)$. Suppose that
		$T$ and $\mathcal{M}^{\#}_{T,s}$ are bilinear locally weak $L^{(r_1,r_2)}$-bounded and $(\lambda_1, \omega_2 )\in A_{\vec{q},\vec{r}}, (\omega_1,\omega_2)\in A_{\vec{p},\vec{r}}$, where $\vec{q}=(q_1,p_2),\vec{p}=(p_1, p_2 ),\vec{r}=(r_1,r_2,s^\prime)$. Set
		\begin{equation}\label{eq:defnualpha}
			\nu_1^{1+\alpha_1}:=\omega_1^{\frac{1}{k_1}}\lambda_1^{-\frac{1}{k_1}}, \quad \alpha_1:=\frac{1}{p_1k_1}-\frac{1}{q_1k_1}
		\end{equation}
		and $\nu_1\in A_{\infty}$. Then:
		\begin{enumerate}
			\item  If $p_1\leqslant q_1$, we have
			$$\big\|C_{b}^{k_1}(T)\big\|_{L^{p_1}(\omega_1^{p_1})\times L^{p_2}(\omega_2^{p_2})\rightarrow L^q((\lambda_1 \omega_2)^{q})} \lesssim_{\omega_1,\omega_2,\lambda_1}	\|b\|_{BMO_{\nu_1}^{\alpha_1}}^{k_1}.$$
			\item If $q_1< p_1$, we have
			$$\big\|C_{b}^{k_1}(T)\big\|_{L^{p_1}(\omega_1^{p_1})\times L^{p_2}(\omega_2^{p_2})\rightarrow L^q((\lambda_1 \omega_2)^{q})} \lesssim_{\omega_1,\omega_2,\lambda_1}\|M_\nu ^\#(b)\|_{L^{t_1}(\nu_1)}^{k_1},$$
			where $t_1=-1/\alpha_1$.
		\end{enumerate}
	\end{thm}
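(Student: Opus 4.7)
The plan is to follow the now-standard two-step strategy for commutator estimates: first establish a bilinear form-sparse domination for $\mathcal{C}_b^{k_1}(T)$, then convert the resulting sparse form into the off-diagonal two-weight estimate using the hypotheses $(\omega_1,\omega_2)\in A_{\vec p,\vec r}$ and $(\lambda_1,\omega_2)\in A_{\vec q,\vec r}$.

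\textbf{Step 1 (Bilinear sparse domination).} Starting from the identity
\begin{equation*}
\mathcal{C}_b^{k_1}(T)(f_1,f_2)=\sum_{j=0}^{k_1}(-1)^{j}\binom{k_1}{j}(b-c)^{k_1-j}T\big((b-c)^{j}f_1,f_2\big),
\end{equation*}
valid for any constant $c$ (taken as $\langle b\rangle_Q$ at each stopping cube), the iterated Calder\'on--Zygmund stopping-time construction of \cite{MR3695871}, adapted to the bilinear locally weak $L^{(r_1,r_2)}$ setting as in \cite{MR4453692,MR4718669}, yields a sparse family $\mathcal{S}=\mathcal{S}(f_1,f_2,g)$ with
\begin{equation*}
\big|\big\langle\mathcal{C}_b^{k_1}(T)(f_1,f_2),g\big\rangle\big|\lesssim \sum_{j=0}^{k_1}\binom{k_1}{j}\sum_{Q\in\mathcal{S}}\big\langle |b-\langle b\rangle_Q|^{k_1-j}|f_1|\big\rangle_{r_1,Q}\big\langle |f_2|\big\rangle_{r_2,Q}\big\langle |b-\langle b\rangle_Q|^{j}|g|\big\rangle_{s',Q}|Q|.
\end{equation*}
The bilinear weak boundedness of both $T$ and $\mathcal{M}^{\#}_{T,s}$ is invoked at every level of the $k_1$-fold iteration, the sharp maximal truncation absorbing the oscillation of $b$ at each step.

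\textbf{Step 2 (Case $p_1\le q_1$).} Fix $j\in\{0,\dots,k_1\}$ and denote by $\mathcal{I}_j$ the corresponding sparse sum, tested against $g$ in the dual space $L^{q'}((\lambda_1\omega_2)^{-q'})$. Since $\alpha_1\ge 0$, the John--Nirenberg-type inequality in $\BMO_{\nu_1}^{\alpha_1}$ yields, for $m\in\{j,k_1-j\}$ and sufficiently small $\eta>0$,
\begin{equation*}
\big\langle |b-\langle b\rangle_Q|^{m}|h|\big\rangle_{u,Q}\lesssim \|b\|_{\BMO_{\nu_1}^{\alpha_1}}^{m}\big\langle \nu_1^{m(1+\alpha_1)}|h|\big\rangle_{u+\eta,Q},
\end{equation*}
where the strict inequalities $r_i<p_i$, $r_1<q_1$ and $s>\max(p,q)$ ensure the enlarged exponents remain admissible. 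Substituting back and using $\nu_1^{k_1(1+\alpha_1)}=\omega_1/\lambda_1$ together with the bilinear testing inequality under $A_{\vec p,\vec r}$ and $A_{\vec q,\vec r}$, one obtains the bound by $\|b\|_{\BMO_{\nu_1}^{\alpha_1}}^{k_1}\|f_1\|_{L^{p_1}(\omega_1^{p_1})}\|f_2\|_{L^{p_2}(\omega_2^{p_2})}\|g\|_{L^{q'}((\lambda_1\omega_2)^{-q'})}$, and duality finishes case (1).

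\textbf{Step 3 (Case $q_1<p_1$ and the main obstacle).} When $\alpha_1<0$ the $\BMO_{\nu_1}^{\alpha_1}$-norm no longer controls the oscillation at a rate summable over $\mathcal{S}$, and one instead uses the pointwise substitute $|b(x)-\langle b\rangle_Q|\lesssim \langle M_{\nu_1}^{\#}(b)\rangle_{Q}\cdot \nu_1(x)$ (inserted at each level of the stopping iteration) so that the $b$-dependent sparse averages become expressible through $M_{\nu_1}^{\#}(b)$, which lies in $L^{t_1}(\nu_1)$ with $t_1=-1/\alpha_1>0$. The sum over $\mathcal{S}$ is then handled by a Carleson-embedding/sparse-square-function estimate against $\nu_1$, combined with bilinear $A_{\vec p,\vec r}$-testing as in Step~2. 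The main obstacle is precisely the bookkeeping here: the three-weight, three-exponent structure of $A_{\vec p,\vec r}$ and $A_{\vec q,\vec r}$ must be coordinated with H\"older exponents that absorb the factors of $\nu_1$ coming from the sharp-maximal control on one side, and the factors $\omega_i,\lambda_1$ from the two-weight hypothesis on the other, while respecting the strict upper bound $s>\max(p,q)$ on the integrability of $g$; the narrow margin between these constraints is what makes the genuinely bilinear case noticeably more delicate than its linear counterpart in \cite{Hanninen2023WeightedL,MR4718669}.
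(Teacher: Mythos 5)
Your Step~1 matches the paper's organization: Theorem~\ref{main thm} (via Lemmas~\ref{main lem} and~\ref{lem3}) does produce the two endpoint sparse forms you write, and your binomial identity is the right starting point. The difficulty is entirely in Steps~2 and~3, and there the proposal contains two genuine gaps.

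First, the displayed ``John--Nirenberg-type inequality''
$\big\langle |b-\langle b\rangle_Q|^{m}|h|\big\rangle_{u,Q}\lesssim \|b\|_{\BMO_{\nu_1}^{\alpha_1}}^{m}\big\langle \nu_1^{m(1+\alpha_1)}|h|\big\rangle_{u+\eta,Q}$
is neither proved nor correct as stated. What the hypothesis $b\in\BMO_{\nu_1}^{\alpha_1}$ gives is a bound on $\tfrac{1}{|Q|}\int_Q|b-\langle b\rangle_Q|$ by $\|b\|_{\BMO_{\nu_1}^{\alpha_1}}\,\nu_1(Q)^{1+\alpha_1}/|Q|$, i.e.\ a control in terms of an \emph{average} of $\nu_1$ over $Q$ (and powers of $|Q|$), not a pointwise power $\nu_1(x)^{m(1+\alpha_1)}$ under the integral. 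Converting $\nu_1(Q)$ into a pointwise factor and raising the whole expression to the power $r_1 k_1$ is precisely what the paper's argument is engineered to do, by first invoking the sparse domination of $b-\langle b\rangle_Q$ (Lemma~\ref{key lem3}), then applying the Cascante--Ortega--Verbitsky-type inequalities (Lemmas~\ref{lem 11} and~\ref{lem 12}) to unfold $\big(\sum_{P\subseteq Q}\nu_1(P)^{1+\alpha_1}|P|^{-1}\chi_P\big)^{r_1k_1}$, and then iterating with carefully chosen auxiliary $A_\infty$ weights $\zeta_i,\eta_i$ together with Lemma~\ref{lem 13}. None of that is replaced by the single inequality you posit. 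The second, and perhaps more serious, gap is that the proposal does not actually carry out what you call the ``bilinear testing inequality under $A_{\vec p,\vec r}$ and $A_{\vec q,\vec r}$.'' In the paper this is Theorem~\ref{mmmain thm}, a fractional three-function sparse form estimate; its proof has to balance the $\vec r=(r_1,r_2,s')$ exponents against $\vec p$ and $\vec q$ via the $\mu_i$-substitution, and this is where most of the technical work lies. Your outline acknowledges this constraint structure but does not address it.

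In Step~3 the pointwise bound $|b(x)-\langle b\rangle_Q|\lesssim \langle M_{\nu_1}^{\#}(b)\rangle_Q\,\nu_1(x)$ is false: $M_{\nu_1}^{\#}(b)$ only controls the $L^1$-average of $|b-\langle b\rangle_Q|$ over $Q$ relative to $\nu_1(Q)$, not the pointwise value. The paper again uses Lemma~\ref{key lem3} to pass to a sparse sum of averaged oscillations, substitutes $\nu_1(P)^{1+\alpha_1}\mapsto \nu_1(P)\,\Omega_{\nu_1}(b,P)$, threads the factor $M_{\nu_1}^{\#}(b)$ through the iteration via the operators $\mathcal{A}^{\zeta}_{\mathcal{S}',b}$, and then uses H\"older with $t_1=-1/\alpha_1$ to peel off $\|M_{\nu_1}^{\#}(b)\|_{L^{t_1}(\nu_1)}$; Proposition~\ref{prop} (sparse domination of $M_{\nu_1,d}^{\#}$) is also needed in the companion lower-bound argument. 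So while the high-level skeleton (sparse domination, then weight-condition testing) is correct and matches the paper, the two mechanisms you invoke to transfer oscillation of $b$ into $\nu_1$-weighted quantities do not hold in the form stated, and the fractional sparse form estimate for the three-exponent $A_{\vec p,\vec r}$ class is left entirely unaddressed.
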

	\begin{rem}
		If $p=q$ and $\omega_1=\lambda_1$, with the help of \cite[Theorem 1.1]{MR4129475} we can extend the result to $p=q<1$. Similarly, if $\vec{r}=(1,1,1)$  and $p=q$, by invoking \cite[Theorem 1.3]{MR4684375} one can extend the result to $p=q<1$ as well. In general if there is an extrapolation theorem that holds for off-diagonal two weight estimates, we can reach the quasi-Banach range. 
	\end{rem}
	\begin{thm}\label{thm 2}
		Let $1<p_1, p_2,q_1<\infty, 1/{p}=1/{p_1}+1/{p_2}, 1/{q}=1/{q_1}+1/{q_2}$ and  $k_1\in \mathbb{N}$. Let $T$ be a bilinear non-degenerate Calder\'on-Zygmund operator and $b$ be a real-valued $L_{\rm{loc}}^{k_1}(\mathbb{R}^n)$ function. Assume that $(\omega_1, \omega_2)\in A_{\vec{p}}, (\lambda_1, \omega_2)\in A_{\vec{q}}$, where $\vec{q}=(q_1,p_2),\vec{p}=(p_1, p_2 )$, and let $\nu_1, \alpha_1$ be defined as in \eqref{eq:defnualpha}. Then
		\begin{enumerate}
			\item  If $p_1\leqslant q_1$, we have
			\begin{equation}
				\|b\|^{k_1}_{BMO_{\nu_1}^{\alpha_1}}\lesssim \Big\|C_{b}^{k_1}(T):L^{p_1}(\omega_1^{p_1})\times L^{p_2}(\omega_2^{p_2})\to L^{q,\infty}((\lambda_1\omega_2)^{q})\Big\|\nonumber.
			\end{equation}
			\item  If $q_1< p_1$ and set $t_1=-1/{\alpha_1}$, there holds 
			\begin{equation}
				\|M_{\nu_1} ^\#(b)\|^{k_1}_{L^{t_1}(\nu_1)}\lesssim \Big\|C_{b}^{k_1}(T):L^{p_1}(\omega_1^{p_1})\times L^{p_2}(\omega_2^{p_2})\to L^{q}((\lambda_1\omega_2)^{q})\Big\|\nonumber.
			\end{equation}
		\end{enumerate}
		\end{thm}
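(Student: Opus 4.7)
The plan is to follow the median plus approximate weak factorization strategy that Hytönen introduced for the unweighted/linear non-degenerate case, adapted to the bilinear iterated commutator and the two-weight off-diagonal regime, and then to upgrade to a sparse-family argument for part~(2).

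Step 1 (reduction to a weighted oscillation on a fixed cube). Fix a cube $Q$ and let $m=m_b(Q)$ be a median of $b$ on $Q$. Using the defining property of the median, one has
\[
\Big(\frac{1}{|Q|}\int_Q |b-\langle b\rangle_Q|\Big)^{k_1}\lesssim \frac{1}{|Q|}\int_Q |b-m|^{k_1},
\]
so it suffices to control $\int_Q |b-m|^{k_1}$ against a suitable weighted norm of $C_b^{k_1}(T)$. Split $Q=Q^+\cup Q^-$ according to $\mathrm{sign}(b-m)$; by the median property both pieces have measure comparable to $|Q|$.

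Step 2 (non-degeneracy and test functions). By bilinear non-degeneracy of the kernel $K$ of $T$, I can find cubes $\tilde Q_1,\tilde Q_2$ with $\ell(\tilde Q_j)=\ell(Q)$ and $\mathrm{dist}(Q,\tilde Q_j)\simeq \ell(Q)$ such that $|K(x,y_1,y_2)|\gtrsim \ell(Q)^{-2n}$ and $K$ has a constant sign on $Q\times \tilde Q_1\times \tilde Q_2$ (the sign constancy costs only a constant via pigeonhole after a dyadic subdivision). Split $\tilde Q_1$ into $\tilde Q_1^\pm$ according to $\mathrm{sign}(b-m)$ again, and choose
\[
f_1=\chi_{\tilde Q_1^-},\qquad f_2=\chi_{\tilde Q_2},\qquad g=\chi_{Q^+}\cdot (b-m)^{k_1-1}\cdot \psi,
\]
with $\psi$ a weight-adapted normalizer. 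Because $C_b^{k_1}(T)(f_1,f_2)(x)=\int\!\!\int(b(x)-b(y_1))^{k_1}K(x,y_1,y_2)f_1(y_1)f_2(y_2)\,dy_1dy_2$, the integrand has constant sign on $Q^+\times \tilde Q_1^-\times \tilde Q_2$ and $(b(x)-b(y_1))^{k_1}\geqslant |b(x)-m|^{k_1}$ there. Combining with the kernel lower bound yields
\[
|\langle C_b^{k_1}(T)(f_1,f_2),g\rangle|\gtrsim \int_{Q^+}|b-m|^{k_1}\,\psi\,dx,
\]
and the symmetric choice handles $Q^-$.

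Step 3 (part (1): passing to the weak-type bound and the Bloom weight). On the other side, for the weak $L^q$ target I use the standard comparison $\|h\|_{L^{q,\infty}(w)}\gtrsim w(F)^{-1/q'}\int_F h\,w\,dx$ after restricting to a level set $F\subset Q^+$ where $|C_b^{k_1}(T)(f_1,f_2)|$ exceeds its weighted average, yielding
\[
\int_{Q^+}|b-m|^{k_1}\psi\,dx\lesssim \big\|C_b^{k_1}(T)\big\|\cdot \|f_1\|_{L^{p_1}(\omega_1^{p_1})}\|f_2\|_{L^{p_2}(\omega_2^{p_2})}\cdot (\text{dual weight factor}).
\]
Choosing $\psi$ optimally and using the two-weight conditions $(\omega_1,\omega_2)\in A_{\vec p}$, $(\lambda_1,\omega_2)\in A_{\vec q}$ together with the definition $\nu_1^{1+\alpha_1}=\omega_1^{1/k_1}\lambda_1^{-1/k_1}$, the factors involving $\omega_2^{p_2}(\tilde Q_2)$ and the paired $\omega_2$-factors in the target weight cancel, and what remains collapses to the desired $|Q|^{\alpha_1 k_1}\nu_1(Q)^{k_1}$ normalization on the right.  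Taking the supremum over $Q$ produces the $\BMO_{\nu_1}^{\alpha_1}$ bound.

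Step 4 (part (2): the sparse/linearization upgrade for $q_1<p_1$). For this case one cannot work one cube at a time: linearize $M_{\nu_1}^\#$ by choosing, for each $x$, a near-optimal cube $Q(x)$, then extract via a stopping-time argument a sparse subfamily $\{Q_j\}$ with pairwise disjoint major subsets $E_j\subset Q_j$ of $\nu_1$-measure comparable to $\nu_1(Q_j)$. On each $Q_j$, run Steps 1--3 with test functions $f_1^{(j)},f_2^{(j)},g^{(j)}$ supported in essentially disjoint regions (the sparseness ensures the supports for different $j$ can be kept almost disjoint). Using the $\ell^{t_1}$-valued vector-valued setup from \cite{MR4718669} and \cite{MR4453692} this sums to
\[
\|M_{\nu_1}^\#(b)\|_{L^{t_1}(\nu_1)}^{k_1}\lesssim \big\|C_b^{k_1}(T)\big\|_{L^{p_1}(\omega_1^{p_1})\times L^{p_2}(\omega_2^{p_2})\to L^q((\lambda_1\omega_2)^q)}.
\]

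The main obstacle I foresee is Step 3: keeping track of the bilinear $A_{\vec p}$ and $A_{\vec q}$ constants so that the weighted volumes $\omega_1^{p_1}(\tilde Q_1^-)$, $\omega_2^{p_2}(\tilde Q_2)$ and $(\lambda_1\omega_2)^q(Q^+)$ combine with the $\omega_2$ in the pre-dual exactly into the Bloom weight $\nu_1^{(1+\alpha_1)k_1}(Q)$. The bilinear non-degeneracy in Step 2 is standard but the sign-constancy subdivision has to be arranged so that the good subcube still has measure comparable to $|Q|$ for \emph{both} the Lebesgue and the weighted measures that appear in Step 3. In Step 4, the delicate point is guaranteeing that the sparse family can be chosen so that the twin-cube construction for different $Q_j$ does not collide, which is why the sparse $E_j$'s are used and $\nu_1\in A_\infty$ is needed.
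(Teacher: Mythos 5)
Your proposal is in the right spirit --- median trick plus non-degenerate approximate weak factorization is indeed what the paper uses --- but several of its load-bearing steps are either misplaced or absent, and as written the argument does not go through.

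\textbf{Median on the wrong cube.} You take $m$ to be a median of $b$ on $Q$ and then split \emph{both} $Q$ and the twin cube $\tilde Q_1$ according to $\mathrm{sign}(b-m)$, choosing $f_1 = \chi_{\tilde Q_1^-}$. But $m$ is a median on $Q$, not on $\tilde Q_1$, so there is no reason why $|\tilde Q_1^-|\gtrsim|\tilde Q_1|$; the set $\tilde Q_1^-$ can have arbitrarily small measure (even be empty), in which case $f_1 \equiv 0$ and the claimed lower bound $|\langle C_b^{k_1}(T)(f_1,f_2),g\rangle|\gtrsim\int_{Q^+}|b-m|^{k_1}\psi$ is false. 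In the paper the roles are reversed and the median $\beta$ is taken on the \emph{target} cube $\tilde Q$ (so that $|\tilde Q\cap\{b\gtrless\beta\}|\gtrsim|\tilde Q|$ is automatic), while the sources $f_1,f_2$ live on $Q$ and are cut down to $Q\cap\{b\leqslant\beta\}$ --- here smallness of the set is harmless because $(\beta-b)_+$ vanishes off it anyway. Also, the factor $(b-m)^{k_1-1}$ you insert into $g$ is unnecessary and in fact would give $\int_{Q^+}(b-m)^{2k_1-1}\psi$ rather than $\int_{Q^+}(b-m)^{k_1}\psi$; the kernel already contributes $(b(x)-b(y_1))^{k_1}$.

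\textbf{Missing dual-weight test functions and the $k_1>1$ machinery.} Step~3 is a black box: ``choosing $\psi$ optimally\ldots the factors cancel\ldots collapses to the desired $\nu_1^{(1+\alpha_1)k_1}(Q)$'' is precisely where the proof lives and cannot be waved away. The paper does not use bare indicators; the essential cancellation comes from taking $f_i = \sigma_i\chi$ with $\sigma_1=\omega_1^{-p_1'}$, $\sigma_2=\omega_2^{-p_2'}$, so that $\|f_i\|_{L^{p_i}(\omega_i^{p_i})}^{p_i}=\sigma_i(\cdot)$. After this one still has $\int_Q(\beta-b)_+^{k_1}\sigma_1$, and to handle the iterated case one needs the Jensen-type step $\frac{1}{|Q|}\int_Q(\beta-b)_+\sigma_1^{1/k_1}\leqslant\big(\frac{1}{|Q|}\int_Q(\beta-b)_+^{k_1}\sigma_1\big)^{1/k_1}$, the sparse domination of the weighted oscillation of $b$ (Lemma~\ref{key lem3}), and the reverse-H\"older-type Lemmas~\ref{key lem1}--\ref{key lem2} for $A_\infty$ weights to trade $\langle\eta_1\rangle_Q^{1/(q_1'k_1)}\langle\sigma_1\rangle_Q^{1/(p_1k_1)}/\langle\sigma_1^{1/k_1}\rangle_Q$ for $\langle\nu_1\rangle_Q^{1+\alpha_1}$, followed by a Carleson-type embedding over the sparse family. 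None of these components is identified in the proposal, and they are not optional: without them there is simply no way to pass from the median-based estimate to the $\nu_1$-normalized oscillation $\frac{1}{\nu_1(Q)^{1+\alpha_1}}\int_Q|b-\langle b\rangle_Q|$.

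\textbf{Part (2): the randomization step is the missing idea.} For $q_1<p_1$ one cannot do one cube at a time, as you note, but the crucial ingredient that makes a simultaneous estimate over a whole sparse family possible is the Khinchine/random-sign argument: one first upgrades the operator-norm bound to a ``vectorial'' bound for finite sums $\sum_i|\langle C_b^{k_1}(T)(f_{1,i},f_{2,i}),g_i\rangle|$ with essentially disjointly supported test functions, by inserting independent random signs and taking expectations. Only then is the sparse domination of $M_{\nu_1,d}^\#$ (Proposition~\ref{prop}) combined with a dualization against a sequence $\lambda_{Q_j}$ with $\sum\lambda_{Q_j}^{r'}\leqslant 1$ and a careful weight computation. ``Using the $\ell^{t_1}$-valued vector-valued setup'' is not specific enough to stand in for this; the randomization step is the one genuinely new ingredient beyond the $p_1\leqslant q_1$ case and needs to appear explicitly.
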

	The exact definitions of these notations will be provided in Section \ref{sec:pre} and Section \ref{sec:pro}. 
	The upper bound is similar to \cite{MR4718669}. The main difficulty encountered is that the linear method in \cite{MR4718669} is no longer applicable in bilinear situation. Here, we establish a new sparse domination to overcome this difficulty. The lower bound is solved by using median method and some weight conditions.

	This paper is organized as the following: In Section \ref{sec:pre}, we give some notations and definitions. A sparse domination of certain bilinear operator is provided in Section \ref{sec:spr}. In Section \ref{sec:we}, we extend the results of Li  \cite{MR3591468} to obtain weighted estimates for sparse the form. Section \ref{sec:pro} is denoted to proving Theorem \ref{thm 10} and Theorem \ref{thm 2}.
	
	\section{Preliminaries}\label{sec:pre}
	\subsection{Dyadic cube system}
	We denote by $\mathcal{Q}$ the set of all cubes $Q\subset\mathbb{R}^n$ with sides parallel to the axes. Given a cube $Q\subset \mathcal{Q}$, we use $\mathcal{D}(Q)$ to denote the set of all dyadic cubes with respect to $Q$, that is, the cubes obtained by repeated subdivision of $Q$ and each of its descendants into $2^n$ congruent subcubes.   
	A dyadic lattice $\mathscr{D}$ is a collection of cubes of $\mathbb{R}^n$ such that:
	\begin{enumerate}
		\item If $Q\in \mathscr{D}$, then $\mathcal{D}(Q)\subset \mathscr{D}$.
		\item Any $Q^\prime, Q^{\prime\prime} \in \mathscr{D}$ have a common ancestor, namely, we can find $Q\in \mathscr{D}$ such that $Q^\prime, Q^{\prime\prime} \in \mathcal{D}(Q)$.
		\item For any compact set $K\subset \mathbb{R}^n$, there exists a cube $Q\in \mathscr{D}$ containing $K$.
	\end{enumerate}
	\subsection{Basic notations}Given a cube $Q\in \mathcal{Q}$, a measure $\mu$ and $f\in{L^{r}_{\rm{loc}}(\mathbb{R}^n)}$, we denote the average  $$\langle f \rangle_{r,Q}^{\mu}:=\Big(\frac{1}{\mu (Q)} \int_{Q}f^{r}d\mu\Big)^{\frac{1}{r}}.$$
	 When $\mu$ is Lebesgue measure we omit $\mu$ in the supscript and simply write it as $\langle f \rangle_{r,Q}$. In particular, when $r=1$, we set  $\langle f \rangle_{Q}:= \langle f \rangle_{1,Q}=\frac{1}{|Q|} \int_{Q}f.$ 
	We denote the maximal operator and bi-maximal operator as follows 
	\begin{align}
	&M_{r,\mu}(f):=\sup \limits _{Q\in \mathcal{Q}} \langle |f| \rangle ^{\mu}_{r,Q} \chi_{Q},\nonumber\\
	&M_{\vec{r}}(f):=\sup \limits _{Q\in \mathcal{Q}} \prod_{i=1}^{2}\langle |f_i| \rangle_{r_i,Q} \chi_{Q},\nonumber
	\end{align}
    where $\vec{r}=(r_1,r_2)$. We also denote "$A\lesssim B$", when $A\leqslant CB$, where $C$ is some irrelevant constant. If these $C$ depends on the weight $\mu$, this will be denoted by "$A\lesssim_{\mu}B$". We use $\ell(Q)$ to denote the side length and use $CQ$ to denote a cube with the same center as $Q$ and the side length is $C$ times of $Q$. 
	Let $\eta\in(0,1)$. A collection $\mathcal{S}$ of cubes is called $\eta$-sparse if for each $Q \in \mathcal{S}$, there exists a subset $E_Q \subset Q$ such that $|E_Q| \geqslant \eta|Q|$ and the sets $\{E_Q\}_{Q \in \mathcal{S}}$ are pairwise disjoint.
		\subsection{Weight}
		We say a weight $\omega\in A_p$ if
		$$[\omega]_{A_p}:=\sup\limits_{Q\in \mathcal{Q}}\langle \omega \rangle_{Q}\langle \omega^{1-p^\prime} \rangle_{Q}^{p-1}<\infty, \quad  1<p<\infty.$$
		Furthermore, we say that $\omega \in A_{\infty}$ if
		$$[\omega]_{A_\infty}:= \sup\limits_{Q\in \mathcal{Q}}\frac{1}{\omega(Q)}\int_{Q}M(\omega\chi_{Q})<\infty.$$
		The following properties of $A_\infty$ weights are crucial for us: 
		\begin{lem}\emph{(}\cite[Lemma 2.5]{MR4453692}\emph{)}\label{key lem1}
			Let $t_1,\dots,t_n\in(0,\infty)$ and $\omega_1,\dots,\omega_n\in A_{\infty}$. Then there exists a constant $RH_{\vec{\omega},\vec{t}}$ such that for every cube $Q$,
			\begin{equation}
				\prod_{i=1}^{n}\big(\langle \omega_i \rangle_{Q} \big) ^{t_i} \leq RH_{\vec{\omega},\vec{t}}\, \Big\langle \prod_{i=1}^{n}\omega_i ^{t_i}\Big\rangle_Q.\nonumber
			\end{equation}
		\end{lem}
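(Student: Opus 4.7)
The plan is to exploit the classical equivalence, for every $\omega\in A_\infty$, between the arithmetic and the exponential--logarithmic averages:
$$\langle \omega\rangle_Q \lesssim_{[\omega]_{A_\infty}} \exp\langle \log \omega\rangle_Q.$$
This characterization is a known consequence of the Fujii--Wilson formulation of $A_\infty$ used in the paper (see e.g.\ Hyt\"onen--P\'erez), and it is the key ingredient that lets us ``linearize'' the product via logarithms.

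Given this, the argument is short. Applying the bound above to each $\omega_i$ and raising to the power $t_i$ gives
$$\langle \omega_i\rangle_Q^{t_i} \leq C_i^{t_i}\,\exp\langle \log \omega_i^{t_i}\rangle_Q,$$
and multiplying over $i$, using linearity of the average inside the exponential, turns the product of exponentials into the exponential of a single average:
$$\prod_{i=1}^{n} \langle \omega_i\rangle_Q^{t_i} \leq \Big(\prod_{i=1}^{n} C_i^{t_i}\Big)\exp\Big\langle \log\prod_{i=1}^{n} \omega_i^{t_i}\Big\rangle_Q.$$
Finally, Jensen's inequality for the concave function $\log$ yields $\exp\langle \log g\rangle_Q \leq \langle g\rangle_Q$, applied with $g=\prod_i \omega_i^{t_i}$, and the claim follows with $RH_{\vec\omega,\vec t}=\prod_i C_i^{t_i}$.

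The only nontrivial step is the first one: passing from the Fujii--Wilson $A_\infty$ condition to the comparison $\langle \omega\rangle_Q\lesssim \exp\langle \log\omega\rangle_Q$. One route is the sharp reverse H\"older inequality of Hyt\"onen--P\'erez, which produces an $r>1$ depending on $[\omega]_{A_\infty}$ with $\langle \omega^r\rangle_Q^{1/r}\leq 2\langle \omega\rangle_Q$; letting $r\downarrow 1$ then delivers the log--exp bound. Alternatively, one uses $A_\infty=\bigcup_p A_p$ and passes to the limit $p\to\infty$ in the $A_p$ constant, noting that $\langle \omega^{1-p'}\rangle_Q^{p-1}\to \exp(-\langle \log\omega\rangle_Q)$. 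Either route is textbook material, so essentially all the work is absorbed into this known equivalence, and no new ideas beyond the logarithmic trick are required.
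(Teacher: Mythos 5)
Your proof is correct and is essentially the standard argument behind this lemma (which the paper cites from Li's work rather than proving): pass to the exponential--logarithmic average, use linearity of the average inside the exponential to absorb the product, then apply Jensen with $g=\prod_i\omega_i^{t_i}$. The only step that deserves a slightly sharper justification is the passage from the Fujii--Wilson $A_\infty$ condition used in the paper to $\langle\omega\rangle_Q\lesssim\exp\langle\log\omega\rangle_Q$; the cleanest route is simply $\omega\in A_\infty\Rightarrow\omega\in A_{p_0}$ for some finite $p_0$, and then Jensen applied to $e^x$ gives $\langle\omega^{-1/(p_0-1)}\rangle_Q^{p_0-1}\geq\exp(-\langle\log\omega\rangle_Q)$, so $\langle\omega\rangle_Q\leq[\omega]_{A_{p_0}}\exp\langle\log\omega\rangle_Q$ without any limiting argument, whereas your ``letting $r\downarrow1$ in the reverse H\"older inequality'' phrasing is a bit loose since the sharp reverse H\"older fixes a single exponent $r>1$ and does not by itself produce the exponential bound by a limit.
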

		\begin{lem}\emph{(}\cite[Lemma 2.9]{MR4453692}\emph{)}\label{key lem2}
			Let $t_1,\dots,t_n\in(0,\infty)$ and $\omega_1,\dots,\omega_n$ be weights such that $\prod_{i=1}^{n}\omega_i^{t_i} \in A_{\infty}$. Then there exists a constant K depending on $[\prod_{i=1}^{n}\omega_i^{t_i}]_{A_{\infty}}$ such that for every cube $Q$,
			\begin{equation}
				\Big\langle \prod_{i=1}^{n}\omega_i ^{t_i}\Big\rangle_Q \leq K\prod_{i=1}^{n}\big(\langle \omega_i \rangle_{Q} \big) ^{t_i} .\nonumber
			\end{equation}
		\end{lem}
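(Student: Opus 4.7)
Write $W := \prod_{i=1}^n \omega_i^{t_i}$, which lies in $A_\infty$ by hypothesis. The plan is to reduce the claimed ``reverse Jensen''-type inequality to a single application of Jensen's inequality, after first replacing the arithmetic mean $\langle W \rangle_Q$ by its geometric counterpart. Concretely, the first step is to invoke the fact that the Fujii--Wilson $A_\infty$ condition defined in Section \ref{sec:pre} is equivalent to the Hru\v{s}\v{c}ev/logarithmic condition
$$\sup_Q \frac{\langle W \rangle_Q}{\exp\bigl(\langle \log W \rangle_Q\bigr)} < \infty,$$
with the supremum dominated by a constant $C([W]_{A_\infty})$ depending only on the Fujii--Wilson constant of $W$. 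This yields, for every cube $Q$,
$$\langle W \rangle_Q \leq C\bigl([W]_{A_\infty}\bigr)\, \exp\bigl(\langle \log W \rangle_Q\bigr).$$

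The rest of the argument is essentially algebraic. Since $\log W = \sum_{i=1}^n t_i \log \omega_i$, linearity of the integral converts the exponential of the log-average into a product,
$$\exp\bigl(\langle \log W \rangle_Q\bigr) = \prod_{i=1}^n \exp\bigl(\langle \log \omega_i \rangle_Q\bigr)^{t_i},$$
and Jensen's inequality applied to the concave function $\log$ on the probability space $(Q, dx/|Q|)$ gives $\exp(\langle \log \omega_i \rangle_Q) \leq \langle \omega_i \rangle_Q$ for each $i$. Taking the product with the exponents $t_i > 0$ and chaining with the previous display delivers the claimed inequality with $K = C([W]_{A_\infty})$.

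The main obstacle, and the only nontrivial ingredient, is the equivalence invoked in the first step: namely, that the Fujii--Wilson definition of $[W]_{A_\infty}$ quantitatively implies the Hru\v{s}\v{c}ev logarithmic bound. This is a classical fact from weight theory, provable via the reverse H\"older inequality for $A_\infty$ weights together with a John--Nirenberg-type argument; once cited as a black-box input, the remainder of the proof collapses to two lines. If one wished to avoid invoking the logarithmic characterization, an alternative route would be to apply the reverse H\"older inequality for $W$ directly and then H\"older-pair the exponents $(t_1,\dots,t_n)$ against a suitable partition of unity on the exponents, but this is noticeably more cumbersome than the geometric-mean approach and produces a less transparent dependence of $K$ on $[W]_{A_\infty}$.
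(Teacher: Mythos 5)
Your proof is correct, and it is the standard argument for this ``reverse Jensen'' inequality: dominate $\langle W\rangle_Q$ by a constant times $\exp\bigl(\langle \log W\rangle_Q\bigr)$ using the logarithmic (Hru\v{s}\v{c}ev) characterization of $A_\infty$, factor the exponential via $\log W=\sum_i t_i\log\omega_i$, and bound each $\exp\bigl(\langle\log\omega_i\rangle_Q\bigr)\le\langle\omega_i\rangle_Q$ by Jensen before raising to the positive power $t_i$. The paper cites this lemma from \cite{MR4453692} without reproducing a proof, and the argument there proceeds along exactly these lines, so there is nothing substantive to contrast.
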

		
		In the bilinear setting, for $1< p_1,p_2<\infty, 1/{p}=1/{p_1}+1/{p_2}$ and $\vec{p}=(p_1,p_2)$, we say that $\vec{\omega} := (\omega_1,\omega_2)\in A_{\vec{p}}$ if
		$$[\vec{\omega}]_{A_{\vec{p}}}  :=\sup\limits_{Q\in \mathcal{Q}} \langle \omega \rangle _{p,Q}\langle \omega_1^{-1} \rangle _{p_1^\prime,Q} \langle \omega_2^{-1} \rangle _{p_2^\prime,Q} <\infty, \quad \omega:=\omega_1\omega_2. $$
		It is proved in \cite{MR2483720} that $\vec{\omega} \in A_{\vec{p}}$
		if and only if
		\begin{align}
			\omega^{p}\in A_{2p},\quad   \omega_1^{-p_1^\prime}\in A_{2p_1^\prime},\quad \omega_2^{-p_2^\prime}\in A_{2p_2^\prime}.\nonumber
		\end{align}
		Li, Martell and Ombrosi generalized the $A_{\vec{p}}$ weights in \cite{MR4129475} to the more general   $A_{\vec{p},\vec{r}}$ weights. Recall that for $1\leqslant r_1\leqslant p_1< \infty, 1\leqslant r_2\leqslant p_2< \infty,  p<r^\prime_3\leqslant \infty$ and $1/{p}=1/{p_1}+1/{p_2}$, we say that $\vec{\omega} := (\omega_1,\omega_2)\in A_{\vec{p},\vec{r}}$ if
		$$[\vec{\omega}]_{A_{\vec{p},\vec{r}}} :=\sup\limits_{Q}\big\langle \omega \big\rangle _{\frac{r_3^\prime p}{r_3^\prime-p},Q}\big\langle \omega_1^{-1}\big\rangle_{\frac{r_1p_1}{p_1-r_1},Q}\big\langle \omega_2^{-1}\big\rangle_{\frac{r_2p_2}{p_2-r_2},Q} <\infty, \quad \omega:=\omega_1\omega_2.$$
		In the case of $r_3=1$, the first term  $\langle \omega \rangle _{{r_3^\prime p}/{r_3^\prime-p},Q}$ is understood as $\langle \omega \rangle_{p,Q}$. Analogously, when $p_i=r_i$, the terms corresponding to $\omega_i$ needs to be replaced by $\rm{ess}\sup_{Q}\omega_{i}^{-1}$. It should be mentioned that $A_{\vec{p},(1,1,1)} = A_{\vec{p}}$. It is proved in \cite{MR4129475} that $\vec{\omega}\in A_{\vec{p},\vec{r}}$ if and only if
		$$\omega^{\delta_3}\in A_{\frac{1-r}{r}\delta_3},\quad \omega_1^{\theta_1}\in A_{\frac{1-r}{r}\theta_1}, \quad \omega_2^{\theta_2}\in A_{\frac{1-r}{r}\theta_2},$$
		where
		\begin{align}
			\frac{1}{r}:=\sum_{i=1}^{3}\frac{1}{r_i},\quad \frac{1}{p_3}:=1-\frac{1}{p},\quad\frac{1}{\delta_i}:=\frac{1}{r_i}-\frac{1}{p_i},\quad i=1,2,3,\nonumber
		\end{align}
		and
		\begin{align}
			\frac{1}{\theta_i}:=\frac{1-r}{r}-\frac{1}{\delta_i},\quad i=1,2,3.\nonumber
		\end{align}

		\subsection{Bilinear Calder\'on-Zygmund operators}
		Let $\Delta:=\big\{(x,y_1,y_2)\in (\mathbb{R}^n)^3: x=y_1=y_2\big\}$. We say that $K:(\mathbb{R}^n)^3\setminus \Delta \to \mathbb{C}$ is bilinear Calder\'on-Zygmund kernel if
		\begin{equation}
			|K(x,y_1,y_2)|\leqslant \frac{c_K}{(|x-y_1|+|x-y_2|)^{2n}},
		\end{equation}
		and whenever $h\leqslant\frac{1}{2}\operatorname{max}(|x-y_1|,|x-y_2|)$, 
		\begin{equation}\label{key formular9}
			\begin{split}
			|K(x+h,y_1,y_2)&-K(x,y_1,y_2)|+|K(x,y_1+h,y_2)-K(x,y_1,y_2)|\\
			&+|K(x,y_1,y_2+h)-K(x,y_1,y_2)|\\
			\leqslant&\frac{1}{(|x-y_1|+|x-y_2|)^{2n}}\omega\Big(\frac{h}{|x-y_1|+|x-y_2|}\Big),
			\end{split}
		\end{equation}
		where $\omega$ is a continuous modulus, that is an increasing subadditive function with $\omega(0)=0$ and
		$$\|\omega\|_{\rm{Dini}} = \int_{0}^{1}\omega(t)\frac{dt}{t}< \infty.$$
		We say that $T$ is a bilinear Calder\'on-Zygmund operator, if there exists a bilinear Calder\'on-Zygmund kernel $K$ such that
		\begin{align}
			T(f_1,f_2)(x)=\int_{\mathbb{R}^{2d}}K(x,y_1,y_2)f_1(y_1)f_2(y_2)dy, \quad x\notin \rm{spt}f_1 \cap \rm{spt}f_2\nonumber,
		\end{align}
		and for some $1<p_1,p_2<\infty$ with $ {1}/{p}={1}/{p_1}+{1}/{p_2}$, $T$ is $L^{p_1}(\mathbb{R}^n)\times L^{p_2}(\mathbb{R}^n) \to L^p(\mathbb{R}^n)$ bounded.
		
		We also call a bilinear Calder\'on-Zygmund kernel $K$ is non-degenerate, if for every $y\in \mathbb{R}^d$ and $r>0$, there exists $x \notin B(y,r)$ such that
		\begin{equation}
			|K(x,y,y)|\geq \frac{1}{c_0r^{2d}}.\label{formula 5}
		\end{equation}
	
		\section{A sparse domination principle}\label{sec:spr}
		For $1\leqslant {s}\leqslant{\infty}$, we define the bilinear sharp  grand truncation maximal operator, which is a modification of definition in $\mathcal{M}^{\#}_{T,s}(\vec{f} )(x)$ \cite{MR4058547},
		\begin{align}
			\mathcal{M}^{\#}_{T,s}(f_1,f_2)(x)&:=\sup\limits_ {Q\ni x}\Big(\dfrac{1}{|Q|^{2}}\int_{Q\times Q} \big|\big(T(f_1,f_2)-T(f_1\chi_{3Q},f_2\chi_{3 Q})\big)(x^{\prime})\nonumber\\
			&-\big(T(f_1,f_2)-T(f_1\chi_{3 Q},f_2\chi_{3 Q})\big)(x^{\prime \prime})\big|^{s}dx^{\prime}dx^{\prime \prime}\Big)^{\frac{1}{s}},\nonumber
		\end{align}
		where the supremum is taken over all $Q\in \mathcal{Q}$ containing $x$.
		
		\subsection{Bilinear locally weak $L^{\vec{r}}$-bounded }
		Given a bilinear operator $T$ and $1\leqslant r_1, r_2<\infty, \vec{r}=(r_1,r_2), 1/r=1/{r_1}+1/{r_2}$, we say that $T$ is bilinear locally weak $L^{ \vec{r}}$-bounded if there exists a non-increasing function $\varphi_{T,r_1,r_2}:(0,1)\to[0,\infty)$ such that for any $Q, Q^\prime\in\mathcal{Q}$ and $f_1\in L^{r_1}(Q), f_2\in L^{r_2}(Q) $  one has
		$$\Big|\Big\{x\in Q: \big|T(f_1\chi_{Q},f_2\chi_{Q})(x)\big|>\varphi_{T,r_1,r_2}(\lambda)\langle|f_1|\rangle_{r_1,Q}\langle|f_2|\rangle_{r_2,Q}\Big\}\Big|\leqslant\lambda|Q| ,\quad\lambda\in(0,1).$$
		Note that weak $L^{\vec{r}}$-boundedness of $T$ implies the local weak $L^{\vec{r}}$-boundedness of $T$ with
		$$\varphi_{T,r_1,r_2}(\lambda) =\lambda^{-\frac{1}{r}}\|{T}\|_{L^{r_1}(\mathbb{R}^n) \times L^{r_2}(\mathbb{R}^n)\to L^{r,\infty}(\mathbb{R}^n) },\quad\lambda\in(0,1).$$
		\begin{thm}\label{main thm}
			Let $1<s\leqslant \infty,1\leqslant r_1,r_2<\infty, {1}/{r}={1}/{r_1}+{1}/{r_2}$ satisfy $r<s$. Let $k_1 \in \mathbb {N}$ and  $T$ be a bilinear operator. Assume that $T$ and $\mathcal{M}^{\#}_{T,s}$ are locally weak $L^{\vec{r}}$-bounded. Then there exist $C_{k_1,n}>1$ and $\lambda_{k_1,n} <1$ such that for any $f_1,f_2,g\in L^{\infty}_{\rm{c}}(\mathbb{R}^n)$ and $b\in L^{1}_{\rm{loc}}(\mathbb{R}^n)$, there is a ${1}/{(2\cdot3^n)}$-sparse collection of cubes $\mathcal{S}$ such that 
			\begin{align}\label{key formular6}
				\begin{split}
				\int_{\mathbb{R}^n}\big|\mathcal{C}^{k_1}_{b}(T)(f_1,f_2)\big|\cdot |g| \leqslant &\, C_1\Big(\sum_{Q\in \mathcal{S}} \big\langle {|b-\langle{b}\rangle_Q|^{k_1}|f_1|}\big\rangle_{r_1,Q}\big\langle |f_2|\big\rangle_{r_2,Q}\big\langle |g|\big\rangle_{s^{'},Q}|Q|\\
				&\quad +\sum_{Q\in \mathcal{S}} \langle |f_1|\big\rangle_{r_1,Q}\big\langle |f_2|\big\rangle_{r_2,Q}\big\langle {|b-\langle{b}\rangle_Q|^{k_1}|g|}\big\rangle_{s^{\prime},Q}\big|Q|\Big), 
				\end{split}
			\end{align}
			where
			\begin{equation}
				C_1:= C_{k_1,n}\big( \varphi _{T,r_1,r_2}(\lambda_{k_1,n})+\varphi _{\mathcal{M}^{\#}_{T,s},r_1,r_2}(\lambda_{k_1,n})\big)\label{key formular5}.
			\end{equation}
		\end{thm}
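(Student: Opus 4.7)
The plan is to run a Lerner-type stopping-time recursion on a fixed dyadic lattice, adapted to the iterated bilinear commutator. By the standard $3^n$-shifted dyadic lattices reduction, I fix a single lattice $\mathscr{D}$ and a cube $Q_0\in\mathscr{D}$ with $\supp f_i\cup\supp g\subset\tfrac{1}{3}Q_0$, and aim to produce a sparse family $\mathcal{S}\subset\mathcal{D}(Q_0)$ realizing \eqref{key formular6}. The engine is the single-scale inequality
\begin{equation*}
\int_{Q_0}\bigl|\mathcal{C}^{k_1}_{b}(T)(f_1,f_2)\bigr|\,|g|\;\leqslant\;C_1\bigl(\Lambda_1(Q_0)+\Lambda_2(Q_0)\bigr)+\sum_j\int_{P_j}\bigl|\mathcal{C}^{k_1}_{b}(T)(f_1,f_2)\bigr|\,|g|,
\end{equation*}
in which $\Lambda_1(Q_0),\Lambda_2(Q_0)$ are the two local forms at $Q_0$ appearing on the right-hand side of \eqref{key formular6} and $\{P_j\}$ are pairwise disjoint dyadic subcubes of $Q_0$ with $\sum_j|P_j|\leqslant\tfrac{1}{2}|Q_0|$. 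Iterating this inequality on every $P_j$ assembles $\mathcal{S}$ with sparsity constant $1/(2\cdot 3^n)$.

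To establish the single-scale estimate I expand
\begin{equation*}
\mathcal{C}^{k_1}_{b}(T)(f_1,f_2)(x)=\sum_{j=0}^{k_1}\binom{k_1}{j}(-1)^j\bigl(b(x)-\langle b\rangle_{3Q_0}\bigr)^{k_1-j}T\bigl((b-\langle b\rangle_{3Q_0})^{j}f_1,f_2\bigr)(x)
\end{equation*}
and split $T(\cdot,\cdot)=T(\chi_{3Q_0}\cdot,\chi_{3Q_0}\cdot)+\bigl(T(\cdot,\cdot)-T(\chi_{3Q_0}\cdot,\chi_{3Q_0}\cdot)\bigr)$; the operator $\mathcal{M}^{\#}_{T,s}$ is designed precisely to estimate the $L^s(Q_0)$-oscillation of the tail piece. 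The exceptional set $E\subset Q_0$ is then the union, over $j=0,\dots,k_1$, of the level sets where one of the following quantities exceeds an appropriate multiple of its $3Q_0$-average: $M_{r_1}\bigl((b-\langle b\rangle_{3Q_0})^{j}f_1\chi_{3Q_0}\bigr)$, $M_{r_2}(f_2\chi_{3Q_0})$, $\bigl|T\bigl((b-\langle b\rangle_{3Q_0})^{j}f_1\chi_{3Q_0},f_2\chi_{3Q_0}\bigr)\bigr|$, and $\mathcal{M}^{\#}_{T,s}\bigl((b-\langle b\rangle_{3Q_0})^{j}f_1\chi_{3Q_0},f_2\chi_{3Q_0}\bigr)$. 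Since only $O(k_1)$ such level sets appear, choosing the thresholds large and the parameter $\lambda=\lambda_{k_1,n}<1$ small, and invoking the weak $(r_i,r_i)$ bound for $M_{r_i}$ together with the local weak $L^{\vec r}$ hypotheses on $T$ and on $\mathcal{M}^{\#}_{T,s}$, forces $|E|\leqslant\tfrac{1}{2\cdot 3^n}|Q_0|$. The stopping children $\{P_j\}$ are then the maximal dyadic subcubes of $Q_0$ contained in $E$.

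On $Q_0\setminus\bigcup_jP_j$ all conditions defining $E$ fail, so for each $j$ the ``inside'' piece $T\bigl((b-\langle b\rangle_{3Q_0})^{j}f_1\chi_{3Q_0},f_2\chi_{3Q_0}\bigr)(x)$ is majorized pointwise by $\varphi_{T,r_1,r_2}(\lambda_{k_1,n})\langle(b-\langle b\rangle_{3Q_0})^{j}f_1\rangle_{r_1,3Q_0}\langle f_2\rangle_{r_2,3Q_0}$, while the tail piece admits the same bound after comparing it on $Q_0$ to its $L^s(Q_0)$-mean through $\mathcal{M}^{\#}_{T,s}$ and passing from $L^s$ to $L^1$ by H\"older using $r<s$. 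Multiplying by $(b(x)-\langle b\rangle_{3Q_0})^{k_1-j}|g(x)|$ and integrating over $Q_0\setminus\bigcup_jP_j$, the extreme $j=k_1$ and $j=0$ terms directly yield $\Lambda_1(Q_0)$ and $\Lambda_2(Q_0)$ respectively, each after one final $s'$-H\"older step, while every intermediate $0<j<k_1$ term is rewritten via the identity $(b-\langle b\rangle_{3Q_0})^{k_1-j}(b-\langle b\rangle_{3Q_0})^{j}=(b-\langle b\rangle_{3Q_0})^{k_1}$ and redistributed between the $r_1$- and the $s'$-averages so as to be absorbed into $\Lambda_1(Q_0)+\Lambda_2(Q_0)$. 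This bookkeeping for the $k_1-1$ middle binomial terms is the main technical obstacle; it is what dictates the common centering constant $\langle b\rangle_{3Q_0}$ across all $j$ and what forces the exceptional set to track $\mathcal{M}^{\#}_{T,s}$ applied to the twisted argument $(b-\langle b\rangle_{3Q_0})^{j}f_1$ rather than merely to $f_1$. Once this is carried out, the single-scale inequality is proved, and iterating produces the sparse family $\mathcal{S}$ claimed in \eqref{key formular6}.
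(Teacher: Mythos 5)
Your overall architecture matches the paper's route exactly: the paper reduces Theorem \ref{main thm} to (a) Lemma \ref{main lem}, a sparse domination in which all $k_1+1$ mixed terms $\big\langle|b-\langle b\rangle_Q|^{k_1-k}|f_1|\big\rangle_{r_1,Q}\big\langle|f_2|\big\rangle_{r_2,Q}\big\langle|b-\langle b\rangle_Q|^{k}|g|\big\rangle_{s',Q}|Q|$ appear, obtained by the same Lerner-type stopping-time argument with the binomial expansion of the iterated commutator that you describe (following the linear scheme of Lerner--Lorist--Ombrosi), and (b) Lemma \ref{lem3}, which absorbs each middle term into the two extreme ones, $c_k\leqslant c_0+c_{k_1}$.

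The one genuine gap is in your treatment of step (b). You propose absorbing the intermediate $0<j<k_1$ contributions ``via the identity $(b-\langle b\rangle_{3Q_0})^{k_1-j}(b-\langle b\rangle_{3Q_0})^{j}=(b-\langle b\rangle_{3Q_0})^{k_1}$,'' but that identity cannot act here: after you replace the inside piece by $\varphi_{T}(\lambda)\,\langle(b-\langle b\rangle_{3Q_0})^j f_1\rangle_{r_1,3Q_0}\langle f_2\rangle_{r_2,3Q_0}$ on $Q_0\setminus\bigcup_j P_j$, the factor $|b(x)-\langle b\rangle_{3Q_0}|^{k_1-j}$ lives at the outer point $x$, while the factor $|b-\langle b\rangle_{3Q_0}|^{j}$ is locked inside the $r_1$-average at a different (integration) variable; one cannot recombine them into a single $k_1$th power, nor pull the outer factor through $T$. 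What actually closes the step is the log-convexity inequality underlying Lemma \ref{lem3}: with $\phi=|b-\langle b\rangle_Q|$, H\"older in the $r_1$- and $s'$-averages gives $\langle\phi^{k_1-k}|f|\rangle_{r_1,Q}\leqslant\langle\phi^{k_1}|f|\rangle_{r_1,Q}^{(k_1-k)/k_1}\langle|f|\rangle_{r_1,Q}^{k/k_1}$ and $\langle\phi^{k}|g|\rangle_{s',Q}\leqslant\langle\phi^{k_1}|g|\rangle_{s',Q}^{k/k_1}\langle|g|\rangle_{s',Q}^{(k_1-k)/k_1}$, hence $c_k\leqslant c_0^{(k_1-k)/k_1}c_{k_1}^{k/k_1}\leqslant c_0+c_{k_1}$ by the AM--GM inequality. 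Your phrase ``redistributed between the $r_1$- and the $s'$-averages'' gestures at this H\"older step, but the inequality you need is never stated, and the identity you do cite is not the mechanism. Replacing that sentence with the convexity bound above makes the argument complete and coincides with the paper's Lemma \ref{lem3}.
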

		Theorem \ref{main thm} is an immediate consequence of following Lemma \ref{main lem} and Lemma \ref{lem3}, whose proof are essentially the same as in \cite[Theorem 3.3]{MR4718669} and \cite[Lemma 3.4]{MR4718669}, respectively.
		\begin{lem}\label{main lem}
			Under the assumptions of Theorem \ref{main thm}  we have $$\int_{\mathbb{R}^n}\big|\mathcal{C}^{k_1}_{b}(T)(f_1,f_2)\big|\cdot |g|\leqslant C\sum_{k=0}^{k_1}	\Big(\sum_{Q\in \mathcal{S}} \big\langle {|b-\langle{b}\rangle_Q|^{k_1-k}|f_1|}\big\rangle_{r_1,Q}\big\langle |f_2|\big\rangle_{r_2,Q}\big\langle |b-\langle{b}\rangle|^{k}|g|\big\rangle_{s^{'},Q}|Q|\Big),$$
			where $C$ is given by (\ref{key formular5}).
		\end{lem}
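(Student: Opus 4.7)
The plan is to adapt the principal-cubes / stopping-time argument of Lerner--Lorist--Ombrosi \cite[Theorem 3.3]{MR4718669} to the genuinely bilinear setting. By the usual reduction, it suffices to establish a local inequality: for every large dyadic cube $Q_{0}$ containing the supports of $f_{1},f_{2},g$, one can find pairwise disjoint stopping children $\{P_{j}\}\subset \mathcal{D}(Q_{0})$ with $\sum_{j}|P_{j}|\leqslant \tfrac12|Q_{0}|$ and
\begin{equation*}
\int_{Q_{0}}\big|\mathcal{C}^{k_{1}}_{b}(T)(f_{1},f_{2})\big||g|
\leqslant C\sum_{k=0}^{k_{1}}\Phi_{k}(Q_{0})+\sum_{j}\int_{P_{j}}\big|\mathcal{C}^{k_{1}}_{b}(T)(f_{1},f_{2})\big||g|,
\end{equation*}
where $\Phi_{k}(Q_{0})=\big\langle|b-\langle b\rangle_{Q_{0}}|^{k_{1}-k}|f_{1}|\big\rangle_{r_{1},Q_{0}}\big\langle|f_{2}|\big\rangle_{r_{2},Q_{0}}\big\langle|b-\langle b\rangle_{Q_{0}}|^{k}|g|\big\rangle_{s^{\prime},Q_{0}}|Q_{0}|$. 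Iterating this bound down the tree of principal cubes assembles the $1/(2\cdot 3^{n})$-sparse family $\mathcal{S}$ and the claimed inequality.

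To set up the local estimate, I would first apply, with the pivot $\lambda=\langle b\rangle_{Q_{0}}$, the binomial identity
\begin{equation*}
\mathcal{C}^{k_{1}}_{b}(T)(f_{1},f_{2})=\sum_{k=0}^{k_{1}}\binom{k_{1}}{k}(-1)^{k}(b-\lambda)^{k_{1}-k}T\!\big((b-\lambda)^{k}f_{1},\,f_{2}\big),
\end{equation*}
so that the outer factor $(b-\lambda)^{k_{1}-k}$ can be paired directly with $g$. For each summand I would split
\begin{equation*}
T\!\big((b-\lambda)^{k}f_{1},f_{2}\big)=T\!\big((b-\lambda)^{k}f_{1}\chi_{3Q_{0}},\,f_{2}\chi_{3Q_{0}}\big)+R_{k}.
\end{equation*}
The local piece is handled directly by the bilinear locally weak $L^{\vec r}$-boundedness of $T$, producing a pointwise estimate outside a bad set of measure at most $\lambda_{k_{1},n}|Q_{0}|$. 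The tail $R_{k}$ is almost constant on $Q_{0}$; its oscillation is controlled on average by $\mathcal{M}^{\#}_{T,s}((b-\lambda)^{k}f_{1},f_{2})$, and the assumed locally weak $L^{\vec r}$-bound on $\mathcal{M}^{\#}_{T,s}$ gives the analogous pointwise control of $R_{k}$ outside another small exceptional set.

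Next, I would take $\{P_{j}\}$ to be the maximal dyadic subcubes of $Q_{0}$ on which at least one of the averages $\langle|b-\langle b\rangle_{Q_{0}}|^{k_{1}-k}|f_{1}|\rangle_{r_{1},\cdot}$ ($0\leqslant k\leqslant k_{1}$), $\langle|f_{2}|\rangle_{r_{2},\cdot}$ or $\langle|b-\langle b\rangle_{Q_{0}}|^{k}|g|\rangle_{s^{\prime},\cdot}$ exceeds a large multiple $A$ of its $Q_{0}$-average, together with the analogous conditions coming from the weak bounds on $T$ and $\mathcal{M}^{\#}_{T,s}$. The weak $(1,1)$ property of the dyadic maximal operator forces $\sum_{j}|P_{j}|\leqslant |Q_{0}|/2$ once $A$ is sufficiently large (depending only on $n,k_{1}$). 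On the good set $E_{Q_{0}}=Q_{0}\setminus \bigcup_{j}P_{j}$, the stopping bounds convert both the local part and the tail of $T$ into $\sum_{k=0}^{k_{1}}\Phi_{k}(Q_{0})$, after pairing with $(b-\lambda)^{k_{1}-k}g$ via Hölder's inequality at the exponents $s,s^{\prime}$; shrinking $\lambda_{k_{1},n}$ if necessary absorbs the contribution of the bad sets of the two weak estimates into a fraction of $|Q_{0}|$.

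The main obstacle will be the simultaneous bookkeeping in the stopping step. Because $k_{1}+1$ different weighted averages of $b$-oscillations, together with the bilinear locally weak $L^{\vec r}$-bounds applied to each pair $((b-\lambda)^{k}f_{1},f_{2})$, $0\leqslant k\leqslant k_{1}$, must all be controlled at once, the good set $E_{Q_{0}}$ is the intersection of uniformly many sets, and this is what forces the single threshold $\lambda_{k_{1},n}<1$ in \eqref{key formular5} rather than a purely dimensional one. Unlike in the linear case of \cite{MR4718669}, the weak inequality for $T$ acts on two inputs, so the union bound over $k$ has to accommodate two functions simultaneously; once $\lambda_{k_{1},n}$ is chosen accordingly, the remaining bilinear features cause no further trouble, and the principal-cube iteration proceeds verbatim as in \cite[Theorem 3.3]{MR4718669}, producing the sparse family $\mathcal{S}$ and the sum over $k$ in the claim.
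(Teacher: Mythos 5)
Your proposal is correct and follows the same Lerner--Lorist--Ombrosi principal-cubes strategy that the paper defers to for this lemma (the paper simply cites \cite[Theorem~3.3]{MR4718669} and does not write out the argument); the binomial expansion at the pivot $\langle b\rangle_{Q_0}$, the local/tail split using the bilinear locally weak $L^{\vec r}$-bounds of $T$ and $\mathcal{M}^{\#}_{T,s}$, and the stopping-time bookkeeping with a $k_1$-dependent threshold $\lambda_{k_1,n}$ are exactly the right ingredients in the bilinear setting. One small imprecision worth noting: the weak bound on $\mathcal{M}^{\#}_{T,s}$ controls the \emph{oscillation} of $R_k$ over $Q_0$, not $R_k$ itself pointwise; the nearly-constant value of $R_k$ is not controlled by local averages but is instead absorbed by the telescoping across the principal-cube levels in the recursion, and your write-up should make that distinction explicit when filling in the details.
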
 
		\begin{lem} \label{lem3}
			Let $1\leqslant r_1, s^\prime < \infty $ and $k_1\in  \mathbb{N}$ . Let $f,g\in  L^{\infty}_{\rm{c}}(\mathbb{R}^n)$ and $b\in L^{1}_{\rm{loc}}(\mathbb{R}^n)$. Fix a cube $Q\in \mathcal{Q}$ and for $0\leqslant k \leqslant k_1$ define
			$$c_k:=\big\langle {|b-\langle{b}\rangle_Q|^{k_1-k}|f|}\big\rangle_{r_1,Q}\big\langle |b-\langle{b}\rangle|^{k}|g|\big\rangle_{s^\prime,Q},$$
			then we have $c_k\leqslant c_0+c_{k_1}$.
		\end{lem}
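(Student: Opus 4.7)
The cases $k=0$ and $k=k_1$ are trivial (the conclusion holds with equality), so fix $0<k<k_1$ and abbreviate $B := |b-\langle b\rangle_Q|$. The plan is to prove the stronger geometric-mean bound
\begin{equation}
c_k \;\leqslant\; c_0^{\,(k_1-k)/k_1}\, c_{k_1}^{\,k/k_1},\nonumber
\end{equation}
and then absorb the exponents via Young's inequality $x^{1-\theta}y^{\theta}\leqslant (1-\theta)x+\theta y\leqslant x+y$ with $\theta = k/k_1$.

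To obtain the geometric-mean bound, the key observation is the pointwise factorization
\begin{equation}
B^{k_1-k}|f| \;=\; \bigl(B^{k_1}|f|\bigr)^{(k_1-k)/k_1}\!\cdot |f|^{k/k_1},\qquad B^{k}|g| \;=\; \bigl(B^{k_1}|g|\bigr)^{k/k_1}\!\cdot |g|^{(k_1-k)/k_1}.\nonumber
\end{equation}
Applying H\"older's inequality on $Q$ with conjugate exponents $k_1/(k_1-k)$ and $k_1/k$ to each of $\int_Q B^{(k_1-k)r_1}|f|^{r_1}$ and $\int_Q B^{ks'}|g|^{s'}$ (splitting the factor $|Q|^{-1}$ accordingly), then taking $r_1$-th and $s'$-th roots, gives
\begin{equation}
\langle B^{k_1-k}|f|\rangle_{r_1,Q} \leqslant \langle B^{k_1}|f|\rangle_{r_1,Q}^{(k_1-k)/k_1}\langle |f|\rangle_{r_1,Q}^{k/k_1},\quad \langle B^{k}|g|\rangle_{s',Q} \leqslant \langle B^{k_1}|g|\rangle_{s',Q}^{k/k_1}\langle |g|\rangle_{s',Q}^{(k_1-k)/k_1}.\nonumber
\end{equation}
Multiplying the two estimates and regrouping the four resulting averages into $c_0$ raised to the $(k_1-k)/k_1$ power and $c_{k_1}$ raised to the $k/k_1$ power yields exactly the geometric-mean bound. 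Young's inequality then delivers $c_k\leqslant c_0+c_{k_1}$.

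There is no serious obstacle here: everything reduces to a clean two-factor H\"older interpolation, and the appearance of $c_0$ and $c_{k_1}$ on the right-hand side is forced by the fact that all of the mass $B^{k_1}$ must sit on either $f$ or $g$ at the endpoints of the interpolation. The only minor point worth verifying is that the convex-combination exponents $(k_1-k)/k_1$ and $k/k_1$ match across the two H\"older applications so that, after multiplication, they can be grouped to produce $c_0^{1-\theta}c_{k_1}^{\theta}$ rather than some mixed product; this is what the symmetric factorization above is designed to ensure.
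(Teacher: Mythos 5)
Your proof is correct and follows essentially the same route as the paper (which simply defers to Lemma 3.4 of Lerner--Lorist--Ombrosi): the multiplicative H\"older interpolation $\langle B^{k_1-k}|f|\rangle_{r_1,Q}\leqslant \langle B^{k_1}|f|\rangle_{r_1,Q}^{(k_1-k)/k_1}\langle|f|\rangle_{r_1,Q}^{k/k_1}$ (and its mirror for $g$), followed by Young's inequality to pass from $c_0^{1-\theta}c_{k_1}^{\theta}$ to $c_0+c_{k_1}$. The pointwise factorizations and the matching conjugate exponents $k_1/(k_1-k),\ k_1/k$ check out.
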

	
	\section{Weighted estimates of the sparse form}\label{sec:we}   
	 This section is devoted to establishing a weighted estimate for fractional sparse forms, which is the key to proving Theorem \ref{thm 10}.  
	\begin{thm}\label{mmmain thm}
		Let $1\leqslant r_1 <p_1\leqslant q_1< \infty, 1\leqslant r_2 <p_2 <\infty, 1< p\leqslant q<s\leqslant \infty$ with  ${1}/{p}= {1}/{p_1}+{1}/{p_2}, {1}/{q}={1}/{q_1}+{1}/{p_2}$ and $\beta_1:=r_1(1/{p_1}-1/{q_1})$. Assume that  $ (\lambda_1, \omega_2 )\in A_{\vec{q},\vec{r}}$, where $\vec{q}=(q_1,p_2) , \vec{r}=(r_1, r_2 ,s^\prime)$. For any sparse family cubes $\mathcal{S} \subset \mathscr{D}, f_1\in L^{p_1}(\lambda_1^{p_1})$, $f_2\in L^{p_2}(\omega_2^{p_2})$ and $g\in L^{q^\prime}\big((\lambda_1\omega_2)^{-q^\prime}\big)$ we have
		\begin{equation}\label{key formula2}
			\begin{split}
			\sum_{Q\in \mathcal{S}}\big\langle |f_1| \big\rangle _{\frac{r_1}{1+\beta_1},Q}\big\langle |f_2| \big\rangle_{r_2,Q}\big\langle |g|\big\rangle_{s^\prime,Q}|Q|^{1+\frac{\beta_1}{r_1}}
		\lesssim \big\| f_1\lambda_1\big\|_{L^{p_1}}\big\|f_2\omega_2\big\|_{L^{p_2}}\big\|g(\lambda_1\omega_2)^{-1}\big\|_{L^{q^\prime}},
			\end{split}
		\end{equation}
	\end{thm}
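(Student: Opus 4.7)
The plan is to follow the strategy of Li \cite{MR3591468}: apply H\"older's inequality to peel off the weights from each average in the sparse sum, invoke the $A_{\vec{q},\vec{r}}$ hypothesis to absorb the resulting product of weight averages, and finally handle an unweighted fractional sparse sum. The algebraic identities that drive the H\"older decomposition are
\[
\frac{1+\beta_1}{r_1} = \frac{1}{p_1} + \frac{q_1-r_1}{r_1 q_1},\qquad \frac{1}{r_2} = \frac{1}{p_2} + \frac{p_2-r_2}{r_2 p_2},\qquad \frac{1}{s'} = \frac{1}{q'} + \frac{s-q}{qs}.
\]
Writing $F_1:=|f_1|\lambda_1$, $F_2:=|f_2|\omega_2$, $G:=|g|(\lambda_1\omega_2)^{-1}$, H\"older's inequality then yields
\begin{align*}
\langle |f_1| \rangle_{r_1/(1+\beta_1), Q} &\leq \langle F_1 \rangle_{p_1, Q}\, \langle \lambda_1^{-1} \rangle_{r_1 q_1/(q_1-r_1), Q},\\
\langle |f_2| \rangle_{r_2, Q} &\leq \langle F_2 \rangle_{p_2, Q}\, \langle \omega_2^{-1} \rangle_{r_2 p_2/(p_2-r_2), Q},\\
\langle |g| \rangle_{s', Q} &\leq \langle G \rangle_{q', Q}\, \langle \lambda_1\omega_2 \rangle_{qs/(s-q), Q}.
\end{align*}
The product of the three weight averages is exactly the expression appearing in the definition of $A_{\vec{q},\vec{r}}$ with $\vec{q}=(q_1,p_2)$ and $\vec{r}=(r_1,r_2,s')$, hence uniformly bounded by $[(\lambda_1,\omega_2)]_{A_{\vec{q},\vec{r}}}$.

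This reduces \eqref{key formula2} to the unweighted fractional sparse bound
\[
\sum_{Q\in\mathcal{S}} \langle F_1 \rangle_{p_1, Q} \langle F_2 \rangle_{p_2, Q} \langle G \rangle_{q', Q} |Q|^{1+\beta_1/r_1} \lesssim \|F_1\|_{L^{p_1}} \|F_2\|_{L^{p_2}} \|G\|_{L^{q'}}.
\]
The key numerical identity $1+\beta_1/r_1 = 1/p_1+1/p_2+1/q'$ (equivalently $\beta_1/r_1 = 1/p - 1/q$) controls the scaling. I will use the sparseness $|Q|\lesssim |E_Q|$ together with the pairwise disjointness of the $\{E_Q\}$ to rewrite the sum as $\sum_Q \int_{E_Q}\cdots\,dx$, and dominate it pointwise by a trilinear fractional maximal operator $\mathcal{M}^{\beta_1/r_1}_{(p_1,p_2,q')}(F_1,F_2,G)$. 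A trilinear version of Moen's fractional maximal estimate then gives the desired $L^{p_1}\times L^{p_2}\times L^{q'}\to L^1$ bound.

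The main obstacle is the endpoint nature of this last step: the averaging exponents $(p_1,p_2,q')$ coincide with the norm exponents, so that $M_{p_i}:L^{p_i}\to L^{p_i}$ is only of weak type and the naive maximal estimate fails. I will overcome this by a reverse H\"older perturbation, just as in Li \cite{MR3591468}. Since $(\lambda_1,\omega_2)\in A_{\vec{q},\vec{r}}$ forces each of $\lambda_1^{-1}$, $\omega_2^{-1}$, and $\lambda_1\omega_2$ into some $A_\infty$ class (via the characterizations recalled in Section \ref{sec:pre}), each of them satisfies a reverse H\"older inequality. This allows me to slightly enlarge the three weight exponents in the H\"older decomposition above while keeping the product uniformly bounded by $[(\lambda_1,\omega_2)]_{A_{\vec{q},\vec{r}}}$; correspondingly the function averages shrink to $\langle F_i\rangle_{\tilde p_i,Q}$ with $\tilde p_i<p_i$. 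The resulting strict inequality provides the room required for the multilinear Carleson embedding and the fractional maximal bound, and completes the proof.
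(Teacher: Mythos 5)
Your proposal is correct in outline and does establish the theorem, but it follows a genuinely different route from the paper. Your first step (H\"older to split each average into a ``good'' unweighted piece and a weight average, then absorb the weight product into $[(\lambda_1,\omega_2)]_{A_{\vec q,\vec r}}$) is the same reduction as the paper's, in disguise; both produce the algebraic identity $1+\beta_1/r_1 = 1/p_1+1/p_2+1/q'$. You correctly diagnose that the resulting unweighted sparse form with averaging exponents $(p_1,p_2,q')$ is an endpoint statement where $M_{p_i}\colon L^{p_i}\to L^{p_i}$ fails, and you repair it with a reverse H\"older perturbation: increase each weight exponent slightly (using that $(\lambda_1,\omega_2)\in A_{\vec q,\vec r}$ forces the relevant \emph{powers} $\lambda_1^{-r_1q_1/(q_1-r_1)}$, $\omega_2^{-r_2p_2/(p_2-r_2)}$, $(\lambda_1\omega_2)^{sq/(s-q)}$ into $A_\infty$), thereby shrinking the function exponents to $\tilde p_i<p_i$, and then control the perturbed sparse form by a fractional maximal operator. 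That works, though be careful: what needs reverse H\"older is each of the above weight \emph{powers}, not $\lambda_1^{-1}$, $\omega_2^{-1}$, $\lambda_1\omega_2$ themselves, and the fractional room $\delta-1=1/p_1-1/q_1$ must be distributed among the three factors in a way compatible with $M^{\theta_in}_{\tilde p_i}\colon L^{p_i}\to L^{u_i}$, e.g. by putting all the fractional power on the first factor, for which one should verify $\tilde p_1(\delta-1)<1$.

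The paper instead performs a change of measure $\mu_1=\lambda_1^{r_1q_1/(r_1-q_1)}$, $\mu_2=\omega_2^{r_2p_2/(r_2-p_2)}$, $\mu_3=(\lambda_1\omega_2)^{sq/(s-q)}$, rewrites the sparse form using $\mu_i$-averages, and reduces to averages of exponent $1$ using the boundedness of the \emph{weighted} dyadic maximal operator $M^{\mathcal S}_{r,\mu}\colon L^p(\mu)\to L^p(\mu)$ for $r<p$ --- a fact that is true for an arbitrary weight $\mu$, with no $A_\infty$ hypothesis. The endpoint is then avoided because after the reduction the remaining step is a straightforward Carleson embedding for the weighted dyadic maximal function, combined with an $\ell^{q'}\hookrightarrow\ell^{p'}$ embedding. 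The two proofs use the $A_{\vec q,\vec r}$ condition identically (to bound the same product of weight averages), but the paper's route is more robust (the weighted dyadic maximal bound needs no $A_\infty$ information about the individual pieces $\mu_i$, whereas your reverse H\"older step genuinely does), while yours is arguably more self-contained since it never leaves the Lebesgue-measure framework and the endpoint obstruction and its resolution are stated explicitly.
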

	\begin{proof}
		We follow the idea  in \cite{MR3591468}.
	Denote
		$$\mu_1=\lambda_1^{\frac{r_1q_1}{r_1-q_1}},\quad \mu_2=\omega_2^{\frac{r_2p_2}{r_2-p_2}},\quad 
		\mu_3=(\lambda_1\omega_2)^\frac{sq}{s-q},$$
		then (\ref{key formula2}) can be rewritten as
		\begin{align}
		\sum_{Q\in \mathcal{S}}	&\big\langle |f_1| \big\rangle _{\frac{r_1}{1+\beta_1},Q}\big\langle |f_2| \big\rangle_{r_2,Q}\big\langle |g|\big\rangle_{s^\prime,Q}|Q|^{1+\frac{\beta_1}{r_1}}\nonumber\\
		& \quad\quad\quad\lesssim\big\| |f_1| ^{\frac{r_1}{1+\beta_1}}\mu_1^{-1}\big\|^{\frac{1+\beta_1}{r_1}}_{L^{\frac{p_1(1+\beta_1)}{r_1}}(\mu_1)}\big\||f_2|^{r_2}\mu_2^{-1}\big\|^{\frac{1}{r_2}}_{L^{\frac{p_2}{r_2}}(\mu_2)}\big\||g|^{s^\prime}\mu_3^{-1}\big\|^{\frac{1}{s^\prime}}_{L^{\frac{q^\prime}{s^\prime}}(\mu_3)}.\nonumber
		\end{align}
		Then a simple change of variable reduces the problem to
		\begin{align}\label{key formula4}
			\begin{split}
			\sum_{Q\in \mathcal{S}}\big\langle |f_1| \big\rangle^{\mu_1}_{\frac{r_1}{{1+\beta_1}},Q}&\big\langle |f_2|\big\rangle^{\mu_2}_{r_2,Q}\big\langle|g|\big\rangle^{\mu_3}_{s^\prime,Q}|Q|^{1+\frac{\beta_1}{r_1}}\langle\mu_1\rangle_Q^{\frac{1+\beta_1}{r_1}}\langle \mu_2 \rangle_Q^{\frac{1}{r_2}}\langle \mu_3\rangle_Q^{\frac{1}{s^\prime}}\\
			& \lesssim \big\| |f_1| ^{\frac{r_1}{1+\beta_1}}\big\|^{\frac{1+\beta_1}{r_1}}_{L^{\frac{p_1(1+\beta_1)}{r_1}}(\mu_1)}\big\||f_2|^{r_2}\big\|^{\frac{1}{r_2}}_{L^{\frac{p_2}{r_2}}(\mu_2)}\big\||g|^{s^\prime}\big\|^{\frac{1}{s^\prime}}_{L^{\frac{q^\prime}{s^\prime}}(\mu_3)}\\
			& = \big\| f_1 \big\|_{L^{p_1}(\mu_1)}\big\|f_2\big\|_{L^{p_2}(\mu_2)}\big\|g\big\|_{L^{q^\prime}(\mu_3)}.
		 \end{split}
		\end{align}
		Then we claim that (\ref{key formula4}) is equivalent to 
		\begin{align}\label{key formula5}
			\begin{split}
			\sum_{Q\in \mathcal{S}}&\big\langle |f_1| \big\rangle^{\mu_1} _{Q}\big\langle |f_2| \big\rangle^{\mu_2}_{Q}\big\langle |g|\big\rangle^{\mu_3}_{Q}|Q|^{1+\frac{\beta_1}{r_1}}\langle\mu_1\rangle_Q^{\frac{1+\beta_1}{r_1}}\langle \mu_2 \rangle_Q^{\frac{1}{r_2}}\langle \mu_3\rangle_Q^{\frac{1}{s^\prime}}\\
			&\quad\quad\quad\lesssim  \| f_1 \|_{L^{p_1}(\mu_1)}\|f_2\|_{L^{p_2}(\mu_2)}\|g\|_{L^{q^\prime}(\mu_3)}.
			\end{split}
		\end{align}
		We prove this equivalence via the following two cases. 
		
		\textbf{The case $ {r_1}\geq 1+\beta_1$.}
		 If (\ref{key formula4}) holds,  (\ref{key formula5}) holds by H\"older's inequality.
		Conversely, when (\ref{key formula5}) holds, then
		\begin{align}
			\sum_{Q\in \mathcal{S}}&\big\langle |f_1| \big\rangle^{\mu_1}_{\frac{r_1}{{1+\beta_1}},Q}\big\langle |f_2|\big\rangle^{\mu_2}_{r_2,Q}\big\langle|g|\big\rangle^{\mu_3}_{s^\prime,Q}|Q|^{1+\frac{\beta_1}{r_1}}\langle\mu_1\rangle_Q^{\frac{1+\beta_1}{r_1}}\langle \mu_2 \rangle_Q^{\frac{1}{r_2}}\langle \mu_3\rangle_Q^{\frac{1}{s^\prime}}\nonumber\\
			&\lesssim\sum_{Q\in \mathcal{S}}\big\langle M^{\mathcal{S}}_{\frac{r_1}{1+\beta_1},\mu_1}(f_1)\big\rangle_Q^{\mu_1}\big\langle M^{\mathcal{S}}_{r_2,\mu_2}(f_2)\big\rangle_Q^{\mu_2}\big\langle M^{\mathcal{S}}_{s^\prime,\mu_3}(g)\big\rangle_Q^{\mu_3}|Q|^{1+\frac{\beta_1}{r_1}}\langle\mu_1\rangle_Q^{\frac{1+\beta_1}{r_1}}\langle \mu_2 \rangle_Q^{\frac{1}{r_2}}\langle \mu_3\rangle_Q^{\frac{1}{s^\prime}}\nonumber\\
		    &\lesssim \big\|M^{\mathcal{S}}_{\frac{r_1}{1+\beta_1},\mu_1}(f_1)\big\|_{L^{p_1}(\mu_1)}\big\|M^{\mathcal{S}}_{r_2,\mu_2}(f_2)\big\|_{L^{p_2}(\mu_2)}\big\|M^{\mathcal{S}}_{s^\prime,\mu_3}(g)\big\|_{L^{q^\prime}(\mu_3)}\nonumber\\
			&\lesssim \|f_1\|_{L^{p_1}(\mu_1)}\|f_2\|_{L^{p_2}(\mu_2)}\|g\|_{L^{q^\prime}(\mu_3)},\nonumber
		\end{align}
		where
		$$M^S_{p,\mu}(f):=\sup \limits_{Q\in \mathcal{S}}\big(\langle|f|^p\rangle^\mu_Q \big)^{\frac{1}{p}}.$$
		The last inequality holds since $$\frac{(1+\beta_1)p_1}{r_1}=\frac{p_1}{r_1}+1-\frac{p_1}{q_1}>1.$$

		\textbf{The case  $ {r_1}<{1+\beta_1} $.} For necessity, we use H\"older's inequality in the first step
		\begin{align}
			\sum_{Q\in \mathcal{S}}&\big\langle |f_1| \big\rangle^{\mu_1}_{\frac{r_1}{{1+\beta_1}},Q}\big\langle |f_2|\big\rangle^{\mu_2}_{r_2,Q}\big\langle|g|\big\rangle^{\mu_3}_{s^\prime,Q}|Q|^{1+\frac{\beta_1}{r_1}}\langle\mu_1\rangle_Q^{\frac{1+\beta_1}{r_1}}\langle \mu_2 \rangle_Q^{\frac{1}{r_2}}\langle \mu_3\rangle_Q^{\frac{1}{s^\prime}}\nonumber\\
		    &\lesssim\sum_{Q\in \mathcal{S}}\big\langle |f_1|\big\rangle_Q^{\mu_1}\big\langle M^{\mathcal{S}}_{r_2,\mu_2}(f_2)\big\rangle_Q^{\mu_2}\big\langle M^{\mathcal{S}}_{s^\prime,\mu_3}(g)\big\rangle_Q^{\mu_3}|Q|^{1+\frac{\beta_1}{r_1}}\langle\mu_1\rangle_Q^{\frac{1+\beta_1}{r_1}}\langle \mu_2 \rangle_Q^{\frac{1}{r_2}}\langle \mu_3\rangle_Q^{\frac{1}{s^\prime}}\nonumber\\
			&\lesssim \|f_1\|_{L^{p_1}(\mu_1)}\big\|M^{\mathcal{S}}_{r_2,\mu_2}(f_2)\big\|_{L^{p_2}(\mu_2)}\big\|M^{\mathcal{S}}_{s^\prime,\mu_3}(g)\big\|_{L^{q^\prime}(\mu_3)}\nonumber\\
			&\lesssim \|f_1\|_{L^{p_1}(\mu_1)}\|f_2\|_{L^{p_2}(\mu_2)}\|g\|_{L^{q^\prime}(\mu_3)}.\nonumber
		\end{align}
		Conversely, for sufficiency
		\begin{align}
			\sum_{Q\in \mathcal{S}}&\big\langle |f_1| \big\rangle^{\mu_1} _{Q}\big\langle |f_2| \big\rangle^{\mu_2}_{Q}\big\langle |g|\big\rangle^{\mu_3}_{Q}|Q|^{1+\frac{\beta_1}{r_1}}\langle\mu_1\rangle_Q^{\frac{1+\beta_1}{r_1}}\langle \mu_2 \rangle_Q^{\frac{1}{r_2}}\langle \mu_3\rangle_Q^{\frac{1}{s^\prime}}\nonumber\\
			&\lesssim\sum_{Q\in \mathcal{S}}\big\langle M^{\mathcal{S}}_{1,\mu_1}(f_1) \big\rangle^{\mu_1}_{\frac{r_1}{{1+\beta_1}},Q}\big\langle |f_2|\big\rangle^{\mu_2}_{r_2,Q}\big\langle|g|\big\rangle^{\mu_3}_{s^\prime,Q}|Q|^{1+\frac{\beta_1}{r_1}}\langle\mu_1\rangle_Q^{\frac{1+\beta_1}{r_1}}\langle \mu_2 \rangle_Q^{\frac{1}{r_2}}\langle \mu_3\rangle_Q^{\frac{1}{s^\prime}}\nonumber\\
			&\lesssim  \big\|M^{\mathcal{S}}_{1,\mu_1}(f_1)\big\|_{L^{p_1}(\mu_1)}\|f_2\|_{L^{p_2}(\mu_2)}\|g\|_{L^{q^\prime}(\mu_3)}\nonumber\\
			&\lesssim \|f_1\|_{L^{p_1}(\mu_1)}\|f_2\|_{L^{p_2}(\mu_2)}\|g\|_{L^{q^\prime}(\mu_3)}.\nonumber
		\end{align}
	    Since $ (\lambda_1, \omega_2 )\in A_{\vec{q},\vec{r}}$ and  $\beta_1={r_1}/{p_1}-{r_1}/{q_1}$, we have
     	\begin{align}
    	\sum_{Q\in \mathcal{S}}&\big\langle |f_1| \big\rangle^{\mu_1} _{Q}\big\langle |f_2| \big\rangle^{\mu_2}_{Q}\big\langle |g|\big\rangle^{\mu_3}_{Q}|Q|^{1+\frac{\beta_1}{r_1}}\langle\mu_1\rangle_Q^{\frac{1+\beta_1}{r_1}}\langle \mu_2 \rangle_Q^{\frac{1}{r_2}}\langle \mu_3\rangle_Q^{\frac{1}{s^\prime}}\nonumber\\
    	&\lesssim \sum_{Q\in \mathcal{S}}\big\langle |f_1| \big\rangle^{\mu_1} _{Q}\big\langle |f_2| \big\rangle^{\mu_2}_{Q}\big\langle |g|\big\rangle^{\mu_3}_{Q}\mu_1(Q)^{\frac{1}{p_1}}\mu_2(Q)^{\frac{1}{p_2}}\mu_3(Q)^{1-\frac{1}{q}}\nonumber\\
    	&\leq  \Big(\sum_{Q\in \mathcal{S}}\big(\langle |f_1| \rangle^{\mu_1}_{Q}\big)^{p_1}\mu_1(Q)\Big)^{\frac{1}{p_1}}\Big(\sum_{Q\in \mathcal{S}}\big(\langle |f_2| \rangle^{\mu_2}_{Q}\big)^{p_2}\mu_2(Q)\Big)^{\frac{1}{p_2}}\nonumber\\
    	&\quad\quad\quad\quad\times\Big(\sum_{Q\in \mathcal{S}}\big(\langle |g| \rangle^{\mu_1}_{Q}\big)^{p^\prime}\mu_3(Q)^{\frac{p^\prime}{q^\prime}}\Big)^{\frac{1}{p^\prime}}\nonumber,
	\end{align}
	Since ${1}/{p_1}+{1}/{p_2}+{1}/{p^\prime}=1$, we have used H\"older's inequality in the last step. For ${p^\prime}/{q^\prime}\geq1$, we obtain
	$$\Big(\sum_{Q\in \mathcal{S}}\big(\langle |g| \rangle^{\mu_1}_{Q}\big)^{p^\prime}\mu_3(Q)^{\frac{p^\prime}{q^\prime}}\Big)^{\frac{1}{p^\prime}}\leq \Big(\sum_{Q\in \mathcal{S}}\big(\langle |g| \rangle^{\mu_1}_{Q}\big)^{q^\prime}\mu_3(Q)\Big)^{\frac{1}{q^\prime}}\lesssim \Big(\sum_{Q\in \mathcal{S}}\int_{E_{Q}}\big(\mathcal{M}_{1,\mu_3}(g)\big)^{q^\prime}\mu_3\Big)^{\frac{1}{q^\prime}}.$$
     Now, we arrive at
	\begin{align}
    	\sum_{Q\in \mathcal{S}}	&\big\langle |f_1| \big\rangle^{\mu_1} _{Q}\big\langle |f_2| \big\rangle^{\mu_2}_{Q}\big\langle |g|\big\rangle^{\mu_3}_{Q}|Q|^{1+\frac{\beta_1}{r_1}}\langle\mu_1\rangle_Q^{\frac{1+\beta_1}{r_1}}\langle \mu_2 \rangle_Q^{\frac{1}{r_2}}\langle \mu_3\rangle_Q^{\frac{1}{s^\prime}}\nonumber\\
	    &\lesssim \|\mathcal{M}_{1,\mu_1}(f_1)\|_{L^{p_1}(\mu_1)}\|\mathcal{M}_{1,\mu_2}(f_2)\|_{L^{p_2}(\mu_2)}\|\mathcal{M}_{1,\mu_3}(g)\|_{L^{q^\prime}(\mu_3)}\nonumber\\
		&\leq \|f_1\|_{L^{p_1}(\mu_1)}\|f_2\|_{L^{p_2}(\mu_2)}\|g\|_{L^{q^\prime}(\mu_3)}\nonumber.
	\end{align}
	\end{proof}
	\section{proof of the main theorem}\label{sec:pro}
	\begin{defn}
		Given a weight $\nu$ and $\alpha \geq 0$ , we say
		$b\in BMO_\nu^\alpha$ if
		$$\|b\|_{BMO_\nu^\alpha}:=\sup\limits_{Q\in\mathcal{Q}}\frac{1}{\nu(Q)^{1+\alpha}}\int_{Q}\big|b-\langle b \rangle_{Q}\big| < \infty.$$
		We omit $\alpha $ when $\alpha=0$. Moreover , we define the weighted sharp maximal function as following
		$$M_\nu ^\#(b)=\sup\limits_{Q\in \mathcal{Q}}\frac{\chi_{Q}}{\nu(Q)}\int_{Q}\big|b-\langle b \rangle _Q\big|=:\sup\limits_{Q\in \mathcal{Q}}\Omega_{\nu}(b,Q)\chi_Q.$$
		Note that $\|b\|_{BMO_\nu}=\|M_\nu ^\#(b)\|_{L^{\infty}(\mathbb{R}^n)}.$
	\end{defn}
	\begin{defn} 
		Let  $1< p_1,p_2,q_1< \infty, \vec{p}=(p_1,p_2), \vec{q}=(q_1,p_2)$ with ${1}/{p}= {1}/{p_1}+{1}/{p_2}, {1}/{q}={1}/{q_1}+{1}/{p_2}$.  Assume that $(\omega_1,\omega_2)\in A_{\vec{p}}, (\lambda_1,\omega_2) \in A_{\vec{q}}$ and $\alpha_1:={1}/{(p_1k_1)}-{1}/{(q_1k_1)}$. For $k_1\in \mathbb{N} $, we define the Bloom weight
		$\nu_1$ via
		\begin{equation}
			\nu_1^{1+\alpha_1}:=\omega_1^{\frac{1}{k_1}}\lambda_1^{-\frac{1}{k_1}}.\label{ fomulater1}
		\end{equation}
	\end{defn}
	To our knowledge, this is the first time to define the  off-diagonal bilinear Bloom weight. Therefore, we first show the necessity. Specially, we will prove Theorem \ref{thm 2}. Before we start the proof, we record the following proposition, which is very useful in the proof.
	\begin{lem}\emph{(}\cite[Proposition 2.14]{MR4453692}\emph{)}\label{key lem3}
		Let $\sigma\in A_{\infty}$. Then there holds
		\begin{equation}
			|b-b^{\sigma}_{Q_0}|\chi_{Q_0} \lesssim \sum_{Q\in \mathcal{S}(Q_0)} \big\langle |b-b_{Q}^{\sigma}|\big\rangle_{Q}^{\sigma}\chi_Q,\nonumber
		\end{equation}
		where $\mathcal{S}(Q_0) $ is a $\frac{1}{2}$-sparse collection with all respect to $\sigma$ and with all elements contained in $Q_0$.
	\end{lem}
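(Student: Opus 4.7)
The plan is to prove this via a weighted Calder\'on-Zygmund stopping time argument, adapted to the measure $d\sigma$ in place of Lebesgue measure; this is essentially the weighted analogue of Lerner's formula. The argument hinges on three standard ingredients: the weak-$(1,1)$ bound for the dyadic maximal operator $M^{\sigma,d}$ taken with respect to $\sigma$, the fact that any $\sigma\in A_{\infty}$ is dyadically doubling, and the Lebesgue differentiation theorem for the measure $\sigma$.

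I would first construct $\mathcal{S}(Q_0)$ by iteration. Set $\mathcal{G}_0 := \{Q_0\}$; given $Q\in \mathcal{G}_j$, let $\mathcal{G}_{j+1}(Q)$ consist of the maximal dyadic subcubes $Q'\subsetneq Q$ for which
\[
\big\langle |b-b_Q^{\sigma}|\big\rangle_{Q'}^{\sigma} > 2\kappa\,\big\langle |b-b_Q^{\sigma}|\big\rangle_{Q}^{\sigma}
\]
for a sufficiently large absolute constant $\kappa$, and put $\mathcal{S}(Q_0):=\bigcup_{j\geqslant 0}\mathcal{G}_j$. Since $\bigcup_{Q'\in \mathcal{G}_{j+1}(Q)} Q'$ is contained in the level set where $M^{\sigma,d}_{Q}(b-b_Q^{\sigma})$ exceeds $2\kappa \langle |b-b_Q^{\sigma}|\rangle_Q^{\sigma}$, the weak-$(1,1)$ bound for $M^{\sigma,d}$ gives
\[
\sigma\Big(\bigcup_{Q'\in \mathcal{G}_{j+1}(Q)} Q'\Big)\leqslant \tfrac{1}{2}\,\sigma(Q)
\]
once $\kappa$ is taken large enough, so the sets $E_Q := Q\setminus \bigcup_{Q'\in \mathcal{G}_{j+1}(Q)} Q'$ realise the $\tfrac{1}{2}$-sparseness with respect to $\sigma$.

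To establish the pointwise inequality, fix $x\in Q_0$ and consider the (possibly finite) chain $Q_0 \supsetneq Q_1 \supsetneq Q_2 \supsetneq \cdots$ of cubes in $\mathcal{S}(Q_0)$ containing $x$. By Lebesgue differentiation for $\sigma$, $b_{Q_j}^{\sigma} \to b(x)$ for $\sigma$-a.e.\ $x$ if the chain is infinite; if it terminates at $Q_k$, then $x\in E_{Q_k}$ and by construction $|b(x)-b_{Q_k}^{\sigma}|$ is already controlled by $\langle|b-b_{Q_k}^{\sigma}|\rangle_{Q_k}^{\sigma}$ via the same differentiation argument. Telescoping along the chain then reduces the problem to bounding each difference $|b_{Q_{j+1}}^{\sigma} - b_{Q_j}^{\sigma}|$. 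The maximality of $Q_{j+1}$ forces its dyadic parent $\widehat{Q}_{j+1}$ to fail the stopping condition, so $\langle |b-b_{Q_j}^{\sigma}| \rangle_{\widehat{Q}_{j+1}}^{\sigma} \leqslant 2\kappa\, \langle |b-b_{Q_j}^{\sigma}| \rangle_{Q_j}^{\sigma}$, and combining this with the dyadic doubling $\sigma(\widehat{Q}_{j+1})\lesssim \sigma(Q_{j+1})$ inherited from $\sigma\in A_{\infty}$ yields
\[
|b_{Q_{j+1}}^{\sigma} - b_{Q_j}^{\sigma}| \leqslant \big\langle |b-b_{Q_j}^{\sigma}| \big\rangle_{Q_{j+1}}^{\sigma} \lesssim \big\langle |b-b_{Q_j}^{\sigma}| \big\rangle_{Q_j}^{\sigma}.
\]
Summing over the chain of $Q_j\in \mathcal{S}(Q_0)$ with $Q_j\ni x$ gives exactly the desired domination.

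The main obstacle, as usual in weighted Lerner-type arguments, is to pin down the interaction between the maximality of stopping cubes and the dyadic doubling of $\sigma$: one needs the proportionality $\sigma(\widehat{Q}_{j+1})\lesssim \sigma(Q_{j+1})$ to hold with a constant depending only on $[\sigma]_{A_\infty}$. Once this is secured and the threshold $\kappa$ is calibrated against the constant in the weak-$(1,1)$ inequality for $M^{\sigma,d}$, the rest of the argument reduces to routine telescoping, with the $\sigma$-null exceptional set being absorbed by Lebesgue differentiation for the measure $\sigma$.
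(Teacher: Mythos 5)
The paper does not prove this lemma; it quotes it directly from \cite{MR4453692} (Proposition 2.14 there), so there is no internal argument to compare against. Your weighted Lerner stopping-time construction is correct and is the standard route to this result: the weak-$(1,1)$ bound for the local dyadic maximal operator in the measure $\sigma$ yields $\tfrac12$-sparseness with respect to $\sigma$, dyadic doubling of $\sigma\in A_\infty$ transfers the failed stopping inequality at the parent $\widehat{Q}_{j+1}$ down to $Q_{j+1}$ so that each telescoping jump $|b_{Q_{j+1}}^{\sigma}-b_{Q_j}^{\sigma}|$ is dominated by $\langle|b-b_{Q_j}^{\sigma}|\rangle_{Q_j}^{\sigma}$, and $\sigma$-a.e.\ dyadic differentiation closes the chain, exactly as one expects in the cited reference.
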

	\begin{proof}[Proof of Theorem \ref{thm 2}, (1)] 
     We follow the strategy in \cite{MR4453692}. By (\ref{formula 5}), for any $A\geq3$ and cube $Q$, we can find $\tilde{Q}$ satisfying $\ell(Q)= \ell(\tilde{Q})$ and ${\rm{dist}}(Q,\tilde{Q})\simeq A\ell(Q)$, and there exists some $\sigma\in \mathbb{C}$ with $|\sigma|=1$ such that for all $x\in \tilde{Q}$ and $y_1, y_2 \in Q$, we have
	\begin{equation}
		\operatorname{Re} \big(\sigma K(x, y_1, y_2)\big) \gtrsim_{A} \frac{1}{|Q|^{2}}.\label{key fomular5}
	\end{equation}
	Indeed, for a fixed cube $Q$ and any point $y_0\in Q$, we can find a point $x_0\in (AQ)^{c}$, such that 
	\begin{equation}
		\frac{1}{c_0(Al(Q))^{2d}}\lesssim|K(x_0,y_{0},y_{0})|\lesssim \frac{c_K}{(|x_0-y_{0}|+|x_0-y_{0}|)^{2d}}.\nonumber
	\end{equation}
	Hence, $Al(Q)\leq|x_0-y_0|\leq (c_0c_K)^{1/{(2d)}}Al(Q)$.
	Now, we use $\tilde{Q}$ to denote the cube with center  $x_0$ and side length $l(Q)$ and let $\sigma={\overline{K(x_0,y_0,y_0)}}/{|K(x_0,y_0,y_0)|}$, for any $x\in \tilde{Q}$ and $y_1, y_2 \in Q$
	\begin{align}
		\operatorname{Re} \big(\sigma K(x, y_1, y_2)\big)\geq |K(x_0,y_{0},y_{0})|-|K(x,y_1,y_2)-K(x_0,y_{0},y_{0})|,\nonumber
	\end{align}
	then
	\begin{align}
		|K(x,y_1,&y_2)-K(x_0,y_{0},y_{0})|\nonumber\\
		\leq&|K(x,y_1,y_2)-K(x_0,y_1,y_2)|+|K(x_0,y_1,y_2)-K(x_0,y_{0},y_2)|\nonumber\\
		&+|K(x_0,y_{0},y_2)-K(x_0,y_{0},y_{0})|\nonumber\\
		\lesssim& \frac{1}{(|x-y_1|+|x-y_2|)^{2d}}\omega\Big(\frac{|x-x_0|}{|x-y_1|+|x-y_2|}\Big)\nonumber\\
		&+\frac{1}{(|x_0-y_1|+|x_0-y_2|)^{2d}}\omega\Big(\frac{|y_1-y_{0}|}{|x_0-y_1|+|x_0-y_2|}\Big)\nonumber\\
		&+\frac{1}{(|x_0-y_{0}|+|x_0-y_2|)^{2d}}\omega\Big(\frac{|y_2-y_{0}|}{|x_0-y_{0}|+|x_0-y_2|}\Big)\nonumber\\
		\lesssim& \frac{2}{(Al(Q)-l(Q))^{2d}}\omega\Big(\frac{1}{A-1}\Big)+\frac{1}{(Al(Q))^{2d}}\omega\Big(\frac{1}{A}\Big)\nonumber\\
		=& \frac{1}{(Al(Q))^{2d}}\Big(\frac{2}{(1-A^{-1})^{2d}}\omega\Big(\frac{1}{A-1}\Big)+\omega\Big(\frac{1}{A}\Big)\Big)\nonumber\\
		\leq&\frac{\varepsilon_A}{(Al(Q))^{2d}},\nonumber
	\end{align}
	where $\varepsilon_A\to 0 $ as $A\to \infty $ by the condition that $\omega(t)\to0$ as $t\to0$. Hence, we obtain (\ref{key fomular5}).

		Set $v=(\lambda_1\omega_2)^q,  \sigma_1=\omega_1^{-p_1^\prime} ,\sigma_2=\omega_2^{-p_2^\prime}$. 
		For arbitrary $\beta \in \mathbb{R}$ and $x\in \tilde{Q}\cap \{b\geq\beta\}$, we have
		\begin{align}
			\frac{\sigma_2(Q)}{|Q|^2}\int_{Q}(\beta-b)_{+}^{k_1}\sigma_1\leq&\frac{1}{|Q|^2}\int_{Q\cap \{b\leqslant \beta\}}\int_{Q} \big(b(x)-b(y_1)\big)^{k_1} \sigma_1(y_1) \sigma_2(y_2) dy_{2}dy_{1}\nonumber\\
			\lesssim& \operatorname{Re}\Big(\sigma\int_{Q\cap \{b\leqslant \beta\}}\int_{Q}\big(b(x)-b(y_1)\big)^{k_1}K(x,y_1, y_2)\sigma_1(y_1)\sigma_2(y_2)dy_{2}dy_{1}\Big)\nonumber.
		\end{align}
		Let $\beta$ be a median of $b$ in $\tilde{Q}$, namely
		$$\min\big(|\tilde{Q}\cap\{b\geq \beta\}|,|\tilde{Q}\cap\{b\leq \beta\}|\big)\geq\frac{1}{2}|\tilde{Q}|=\frac{1}{2}|Q|,$$
		since $v\in A_{\infty}$ is doubling, we have
		$$\big|\tilde{Q}\cap\{b\geq \beta\}\big| \thicksim |\tilde{Q}| \Rightarrow v\big(\tilde{Q}\cap\{b\geq \beta\}\big) \thicksim v(\tilde{Q}) \thicksim_{A} v(Q),$$
		As a result,
		\begin{align}
			&v(Q)^{\frac{1}{q}}\frac{\sigma_2(Q)}{|Q|^2}\int_{Q}(\beta-b)_{+}^{k_1}\sigma_1\nonumber\\
			&\lesssim \Big\| \operatorname{Re}\sigma\int_{Q\cap \{b\leqslant \beta\}}\int_{Q}(b(x)-b(y_1))^{k_1}K(x,y_1, y_2)\sigma_1(y_1)\sigma_2(y_2)dy_{2}dy_{1} \Big\| _{L^{q,\infty}(\tilde{Q} \cap \{b\geq \beta \};v)}\nonumber\\
			&\leq\|\chi_{\tilde{Q}}\mathcal{C}_{b}^{k_1}(T)(\chi_{Q\cap \{b\leqslant \beta\}}\sigma_1,\chi_{Q}\sigma_2)\|_{L^{q,\infty}(\nu)}\nonumber\\
			&\lesssim \mathcal{N} \sigma_1(Q)^{\frac{1}{p_1}}\sigma_2(Q)^{\frac{1}{p_2}},\nonumber
		\end{align}
		where $\mathcal{N}= \|C_{b}^{k_1}(T)\|_{L^{p_1}(\omega_1^{p_1})\times L^{p_2}(\omega_2^{p_2})\to L^{q,\infty}((\lambda_1\omega_2)^{q})}$.
		In conclusion, we obtain 
		\begin{equation}\label{eq:511}
		\frac{\langle v \rangle_Q^{\frac{1}{q}} \langle \sigma_2 \rangle_Q^{\frac{1}{p_2^\prime}}}{\sigma_1(Q)^{\frac{1}{p_1}}}\Big({\frac{1}{|Q|}}\Big)^{\frac{1}{q_1^\prime}}\int_{Q}(\beta-b)_{+}^{k_1}\sigma_1\lesssim \mathcal{N}.
		\end{equation}
		Let $\eta_1=\lambda_1^{-q_1^\prime}$,  using H\"older's inequality , 
		$$\langle v\rangle_Q^{\frac{1}{q}}\langle \eta_1\rangle_Q^{\frac{1}{q_1^\prime}}\langle \sigma_2\rangle_Q^{\frac{1}{p_2^\prime}} \geq 1,$$
	plug this into \eqref{eq:511} we have 
		$$\int_{Q}(\beta-b)_{+}^{k_1}\sigma_1\lesssim \mathcal{N} \eta_1(Q)^{\frac{1}{q_1^\prime}}\sigma_1(Q)^{\frac{1}{p_1}}.$$
	On the other hand, since 
		$$\frac{1}{|Q|}\int_{Q}(\beta-b)_{+}\sigma_1^{\frac{1}{k_1}}\leq \Big(\frac{1}{|Q|}\int_{Q}(\beta-b)_{+}^{k_1}\sigma_1\Big)^{\frac{1}{k_1}},$$
		We finally  arrive at
		$$|Q|^{\frac{1}{k_1}-{\frac{1}{q_1^\prime k_1}}-\frac{1}{p_1k_1}-1}\int_{Q}(\beta-b)_{+}\sigma_1^{\frac{1}{k_1}}\lesssim \mathcal{N}^{\frac{1}{k_1}}\langle \eta_1 \rangle _Q^{\frac{1}{q_1^\prime k_1}} \langle \sigma_1 \rangle_Q^{\frac{1}{p_1k_1}}.$$
		Following similar arguments as above, we also have
		$$|Q|^{\frac{1}{k_1}-{\frac{1}{q_1^\prime k_1}}-\frac{1}{p_1k_1}-1}\int_{Q}(b-\beta)_{+}\sigma_1^{\frac{1}{k_1}}\lesssim \mathcal{N}^{\frac{1}{k_1}}\langle \eta_1 \rangle _Q^{\frac{1}{q_1^\prime k_1}} \langle \sigma_1 \rangle_Q^{\frac{1}{p_1k_1}},$$
		hence
		$$\int_{Q}\big|b-\beta\big|\sigma_1^{\frac{1}{k_1}}\lesssim \mathcal{N}^{\frac{1}{k_1}}|Q|^{1+\alpha_1}\langle \eta_1 \rangle _Q^{\frac{1}{q_1^\prime k_1}} \langle \sigma_1 \rangle_Q^{\frac{1}{p_1k_1}} .$$
	Observe that 
		\begin{align}\label{formula 11}
			\begin{split}
			\frac{\int_{Q}|b-\langle b\rangle_{Q}^{\sigma_1^{1/{k_1}}}|\sigma_1^{\frac{1}{k_1}}}{|Q|^{1+\alpha_1}\langle \eta_1 \rangle_Q^{\frac{1}{q_1^\prime k_1}} \langle \sigma_1 \rangle_Q^{\frac{1}{p_1k_1}}}
			\leq \frac{\Big(\int_{Q}|b-\beta|\sigma_1^{\frac{1}{k_1}}+\sigma_1^{\frac{1}{k_1}}(Q)|\beta-\langle b \rangle_Q^{\sigma_1^{1/{k_1}}}|\Big)}{|Q|^{1+\alpha_1}\langle \eta_1 \rangle _Q^{\frac{1}{q_1^\prime k_1}} \langle \sigma_1 \rangle_Q^{\frac{1}{p_1k_1}}} 
			\lesssim\mathcal{N}^{\frac{1}{k_1}}.
		\end{split}
		\end{align}  
		Fix a cube $Q_0$, similar to the triangle inequality above, we have
		$$	\frac{1}{\nu_1(Q_0)^{1+\alpha_1}}\int_{Q_0}\big|b-\langle b \rangle_{Q_0}\big|\leq \frac{2}{{\nu_1}(Q_0)^{1+\alpha_1}}\int_{Q_0}\big|b-\langle b\rangle_{Q_0}^{\sigma_1^{1/{k_1}}}\big|.$$
		Since $\sigma_1\in A_{2p_1^\prime}$, it is easy to see  $\sigma_1^{\frac{1}{k_1}}\in A_{2p_1^\prime}$ by H\"older's inequality. Then using Lemma \ref{key lem3}, we get
		\begin{equation}
			\frac{2}{{\nu_1}(Q_0)^{1+\alpha_1}}\int_{Q_0}\abs{b-\langle b\rangle_{Q_0}^{\sigma_1^{1/{k_1}}}}\lesssim\frac{2}{{\nu_1}(Q_0)^{1+\alpha_1}}\sum_{Q\in \mathcal{S}(Q_0)} \frac{|Q|}{\sigma^{\frac{1}{k_1}}(Q)}\int_{Q}\big|b-\langle b \rangle_Q^{\sigma_1^{1/{k_1}}}\big|\sigma_1^{\frac{1}{k_1}}.\nonumber
		\end{equation}
		According to the inequality (\ref{formula 11}), we arrive at
		\begin{equation}
			\frac{2}{{\nu_1}(Q_0)^{1+\alpha_1}}\int_{Q_0}\abs{b-\langle b\rangle_{Q_0}^{\sigma_1^{1/{k_1}}}}\lesssim \mathcal{N} \frac{2}{{\nu_1}(Q_0)^{1+\alpha_1}}\sum_{Q\in \mathcal{S}(Q_0)}\frac{|Q|^{1+\alpha_1}\big\langle \eta_1 \rangle _Q^{\frac{1}{q_1^\prime k_1}} \langle \sigma_1 \rangle_Q^{\frac{1}{p_1k_1}}}{\langle \sigma_1^{\frac{1}{k_1}} \rangle_Q}.\nonumber
		\end{equation}
		Since $\sigma_1\in A_{\infty}$, by  Lemma \ref{key lem2} we have 
		\[
	\langle \sigma_1^{\frac 1{k_1}}\rangle_Q^{-1}	\langle \sigma_1\rangle_Q^{\frac 1{p_1k_1}}\lesssim  \langle \sigma_1 \rangle_Q^{-1/{k_1}}\langle \sigma_1 \rangle_Q^{\frac 1{p_1k_1}}= \langle \sigma_1 \rangle_Q^{-1/{p_1'k_1}}.
		\]
		Then since $\eta_1\in A_\infty$ and 
		\[
		\nu_1 ^{1+\alpha_1}\sigma_1^{\frac{1}{p_1' k_1}}= \eta_1^{\frac{1}{q_1'k_1}},
		\]
		by  Lemma \ref{key lem2} we have
		\[
		\langle \eta_1\rangle_Q^{\frac 1{q_1'k_1}}\lesssim  \langle \eta_1^{\frac{1}{(1+\alpha_1)q_1'k_1}}\rangle_Q^{1+\alpha_1} \lesssim \langle \nu_1\rangle_Q^{1+\alpha_1} \langle \sigma_1^{\frac 1{(1+\alpha_1)p_1'k_1}}\rangle_Q^{1+\alpha_1}\le 
		\langle \nu_1\rangle_Q^{1+\alpha_1} \langle \sigma_1\rangle_Q^{\frac 1{p_1'k_1}}.
		\]
		Combining the above we get   
		\begin{align}
			\frac{1}{\nu_1(Q_0)^{1+\alpha_1}}\int_{Q_0}\big|b-\langle b \rangle_{Q_0}\big|\lesssim\mathcal{N}^{\frac{1}{k_1}}\frac{2}{{\nu_1}(Q_0)^{1+\alpha_1}}\sum_{Q\in \mathcal{S}(Q_0)} |Q|^{1+\alpha_1}\langle \nu_1\rangle_Q^{1+\alpha_1}.\nonumber
		\end{align}
	Since $\alpha_1\geq0$,
		\begin{align}
		\sum_{Q\in \mathcal{S}(Q_0)}& |Q|^{1+\alpha_1}\langle \nu_1 \rangle_Q^{1+\alpha_1}\leq\Big(\sum_{Q\in \mathcal{S}(Q_0)}\nu_1(Q)\Big)^{1+\alpha_1}\nonumber\\
		&\lesssim\Big(\sum_{Q\in \mathcal{S}(Q_0)}\nu_1(E(Q))\Big)^{1+\alpha_1}\lesssim\nu_1(Q_0)^{1+\alpha_1},\nonumber
		\end{align}
		and we finish the proof.
	\end{proof}
	Next we turn to the second part of Theorem \ref{thm 2}. We begin with a sparse domination principle for the weighted dyadic sharp maximal function. 
	
			\begin{prop}\label{prop}
			Let $f\in L^\infty_{\rm{c}}(\mathbb{R}^n)$. If $\nu\in A_{\infty}$, then the dyadic sharp maximal function $$M_{\nu,d}^{\#}(f):=\sup\limits_{Q\in \mathscr{D}}\frac{1}{\nu(Q)}\big( \int_{Q}|f-\langle f \rangle _Q|\big) \chi_{Q},$$ satisfies 
			\begin{equation}
				M_{\nu,d}^{\#}(f)(x)\lesssim \sum_{Q\in \mathcal{S}} \frac{\chi_{Q}(x)}{\nu(Q)}\int_{Q}\big |f- \langle f \rangle_Q\big|,\nonumber
			\end{equation}
			where $\mathcal{S}\subset \mathscr{D}$ is sparse collection. 
		\end{prop}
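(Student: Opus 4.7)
The plan is to construct $\mathcal{S}$ via a principal-cube (stopping-time) argument adapted to the weighted oscillation
$a(Q) := \nu(Q)^{-1}\int_Q |f - \langle f \rangle_Q|$, and then to convert the $\nu$-measure sparseness that falls out of the stopping time into genuine Lebesgue sparseness using the $A_\infty$ hypothesis on $\nu$.

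Concretely, I will fix a disjoint collection of top cubes $\{Q^0_j\} \subset \mathscr{D}$ large enough to absorb the supremum defining $M_{\nu,d}^{\#}(f)$; because $f \in L^\infty_{\rm c}(\mathbb{R}^n)$ and $\nu(Q) \to \infty$ as $\ell(Q)\to\infty$ for a nontrivial $A_\infty$ weight, any dyadic cube $Q$ strictly containing some $Q^0_j$ will satisfy $a(Q) \lesssim a(Q^0_j)$ by direct computation plus doubling of $\nu$. Within each $Q^0_j$ I will build a family iteratively: starting from $\{Q^0_j\}$, for each $P$ already selected I adjoin the maximal dyadic subcubes $P' \subsetneq P$ with $a(P') > C\,a(P)$, where $C$ is a large constant to be fixed. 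Setting $E_P$ to be the union of these stopping children, the stopping condition together with the elementary inequality $\int_{P'}|f - \langle f \rangle_{P'}| \leqslant 2\int_{P'}|f - \langle f \rangle_P|$ and disjointness will give
\[
C\, a(P)\, \nu(E_P) \;<\; \sum_{P'} \int_{P'}|f - \langle f\rangle_{P'}| \;\leqslant\; 2\nu(P)\, a(P),
\]
so $\nu(E_P) \leqslant (2/C)\nu(P)$. Taking $C$ large ensures $\nu(E_P) \leqslant \nu(P)/2$, and the hypothesis $\nu\in A_\infty$ upgrades this to $|E_P| \leqslant \eta |P|$ for some $\eta \in (0,1)$ depending only on $[\nu]_{A_\infty}$; the complements $P\setminus E_P$ are then the required pairwise disjoint major subsets witnessing sparseness.

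For the pointwise inequality, I will fix $x$ and, for any dyadic $Q\ni x$ contained in some $Q^0_j$, let $\pi(Q)$ denote the smallest $P \in \mathcal{S}$ with $Q \subseteq P$. By maximality of the stopping children, $Q$ is not strictly inside any stopping child of $\pi(Q)$, so $a(Q) \leqslant C\, a(\pi(Q))$ while $x \in \pi(Q)$; dyadic cubes $Q$ that strictly contain some $Q^0_j$ are already handled by the top-cube reduction. Taking the supremum over $Q\ni x$ then yields $M_{\nu,d}^{\#}(f)(x) \lesssim \sup_{P \in \mathcal{S},\,x\in P} a(P)$, which is bounded by the full sparse sum.

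The main obstacle is the passage from $\nu$-sparseness to Lebesgue sparseness in the construction: the stopping-time argument naturally produces control of $\nu(E_P)/\nu(P)$ rather than $|E_P|/|P|$, and only the quantitative equivalence of these two measure ratios on subsets of a cube for $A_\infty$ weights makes the entire setup legitimate; this is precisely where the hypothesis $\nu \in A_\infty$ enters in an essential way. Everything else (the choice of initial top cubes, the stopping-time selection, the oscillation triangle inequality $\int_{P'}|f - \langle f \rangle_{P'}| \leqslant 2\int_{P'}|f - \langle f \rangle_P|$, and the maximality-based pointwise bound) follows the classical Calder\'on--Zygmund pattern.
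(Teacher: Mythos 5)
Your argument is correct, but it constructs the sparse family by a genuinely different mechanism than the paper does. The paper runs a Calder\'on--Zygmund-style selection at geometric thresholds: it sets $\Omega_k=\{M_{\nu,d}^{\#}f>C_0^k\}$, takes the maximal dyadic cubes $Q_j^k$ with $\nu(Q_j^k)^{-1}\int_{Q_j^k}|f-\langle f\rangle_{Q_j^k}|>C_0^k$, declares $\mathcal{S}=\{Q_j^k\}_{j,k}$, and obtains the pointwise bound from the layer-cake identity $M_{\nu,d}^\#f\lesssim\sum_k(\chi_{\Omega_k}-\chi_{\Omega_{k+1}})C_0^k$. Its sparseness estimate compares a level-$(k+1)$ cube to the dyadic \emph{parent} of the level-$k$ cube, which forces an appeal to the doubling of $\nu$ before the $A_\infty$ measure comparison can be applied. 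You instead run a principal-cube stopping time directly on the functional $a(Q)=\nu(Q)^{-1}\int_Q|f-\langle f\rangle_Q|$, stopping at the children where $a$ jumps by a fixed factor $C$; this yields $\nu(E_P)\leqslant(2/C)\nu(P)$ with no parent-cube detour and hence without invoking doubling in the sparseness step, and the pointwise bound falls out of the minimality of $\pi(Q)$ rather than a layer-cake sum. The $A_\infty$ hypothesis enters both proofs at exactly the same point --- upgrading a $\nu$-measure ratio to a Lebesgue-measure ratio via $|A|/|Q|\leqslant C_2(\nu(A)/\nu(Q))^\theta$ --- so the two arguments are close cousins, but yours is marginally cleaner in that one fewer property of $\nu$ is used in the core step. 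One remark on the top-cube reduction: the claim that $a(Q)\lesssim a(Q^0_j)$ for all dyadic $Q\supsetneq Q^0_j$ is correct, but it rests on choosing $Q^0_j$ not merely to contain $\operatorname{spt} f$ but to be so large that $|Q^0_j|\,|\langle f\rangle_{Q^0_j}|\leqslant\tfrac12\int|f|$ (which in turn forces $\int_{Q^0_j}|f-\langle f\rangle_{Q^0_j}|\gtrsim\int|f|$, while $\int_Q|f-\langle f\rangle_Q|\leqslant2\int|f|$ and $\nu(Q)\geqslant\nu(Q^0_j)$ for any larger $Q$); this deserves to be made explicit, since it is exactly the place where compact support is used, and it does not in fact require doubling of $\nu$ as your sketch suggests.
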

		\begin{proof}
			Fixed $C_0>1$ is a constant and chosen later. For $k\in \mathbb{Z}$, we define the set $\Omega_k(x):=\big\{x\in \mathbb{R}^n: M_{\nu,d}^{\#}f(x)>C_0^k\big\}$ and we can select $\{Q_j^k\}$, a mutually disjoint collection of maximal dyadic cubes, such that
			\begin{equation}
				\frac{1}{\nu(Q_j^k)}\int_{Q_j^k}\big|f-\langle f \rangle _{Q_j^k}\big| >C_0^k,\nonumber
			\end{equation}
			then $\Omega_k=\bigcup\limits_{j} Q_j^k$. Let $\mathcal{S}=\bigcup\limits_{j,k}\{Q_j^k\}$, we have
			\begin{align}
				M_{\nu,d}^{\#}f(x) & \lesssim \sum_{k\in \mathbb{Z}}\big[\chi_{\Omega_k}(x)-\chi_{\Omega_{k+1}}(x)\big]C_0^k \nonumber	\\
				& < \sum_{k\in \mathbb{Z}}\chi_{\Omega_k}(x)C_0^k\nonumber\nonumber\\
				&\lesssim \sum_{k\in \mathbb{Z}} \sum_{j} \frac{\chi_{Q_j^k}(x)}{\nu(Q_j^k)} \int_{Q_j^k} \big|f-\langle f \rangle _{Q_j^k}\big|\nonumber\\
				&= \sum_{Q\in \mathcal{S}}\frac{\chi_{Q}(x)}{\nu(Q)} \int_{Q} \big|f-\langle f \rangle _{Q}\big|.\nonumber
			\end{align}
			Now, we will prove that $\mathcal{S}$ is sparse collection. By the maximality, the collection $\mathcal S$ is nested. So we may define $$E(Q_j^k):=Q_j^k\setminus \bigcup_{\substack{Q_l^{k+1}\in \mathcal{S}\\ Q_l^{k+1}\subset Q_j^k}}Q_l^{k+1}.$$ Denote by $(Q_j^k)^{(1)}$  the dyadic  parent of $Q_j^k$, we obtain
			\begin{align}
				\sum_{\substack{l:Q_l^{k+1}\in \mathcal{S}\\ Q_l^{k+1}\subset Q_j^k}} \nu(Q_l^{k+1})&\leq \sum_{l} \frac{1}{C_0^{k+1}}\int_{Q_l^{k+1}}\big|f-\langle f \rangle_{Q_l^{k+1}}\big|\nonumber\\
				&\leq\sum_{l} \frac{1}{C_0^{k+1}}\Big(\int_{Q_l^{k+1}}\big|f-\langle f \rangle_{(Q_j^{k})^{(1)}}\big|+\int_{Q_l^{k+1}}\big|f-\langle f \rangle_{(Q_j^{k})^{(1)}}\big|\Big)\nonumber\\
				&\leq \frac{2C_0^k}{C_0^{k+1}}\nu\big((Q_j^k)^{(1)}\big)
				\leq  \frac{2C_1}{C_0} \nu(Q_j^k),\nonumber
			\end{align}
			where we set  $C_1$ to be the doubling constant of $\nu$. Since $\nu \in A_\infty$, there exist $C_2,\theta>0$ such that for all cubes $Q$ and all measurable subsets $A$ of $Q$ we have \[
			\frac{|A|}{|Q|} \le C_2 \left(\frac{\nu(A)}{\nu(Q)} \right)^{\theta}.
			\] Apply this property to 
			\[
			A= \bigcup_{\substack{Q_l^{k+1}\in \mathcal{S}\\ Q_l^{k+1}\subset Q_j^k}}Q_l^{k+1},
			\]
			 we can choose a sufficiently large $C_0$ so that  $|E(Q_j^k)|\geq \frac{1}{2} |Q_j^k|$.
			We obtain the sparse domination. 
		\end{proof}
		
			\begin{proof}[Proof of Theorem \ref{thm 2}, (2)] 

We first show that under the condition of Theorem \ref{thm 2}, we have the following 
	\begin{align}
		\sum_{i=1}^{N}\big|\big\langle C_{b}^{k_1}(T)(f_{1,i},f_{2,i}),g_i\big\rangle\big|&\lesssim\Big\|\sum_{i=1}^{N}\big\|f_{1,i}\big\|_{\infty}\chi_{Q_i}\Big\|_{L^{p_1}(\omega_1^{p_1})}\Big\|\sum_{i=1}^{N}\big\|f_{2,i}\big\|_{\infty}\chi_{Q_i}\Big\|_{L^{p_2}(\omega_2^{p_2})}\nonumber\\
		&\times \Big\|\sum_{i=1}^{N}\big\|g_i\big\|_{\infty}\chi_{\tilde{Q}_i}\Big\|_{L^{q^{'}}\big((\lambda_1\omega_2)^{-q^{'}}\big)},\nonumber
	\end{align}
	whenever, for $i=1,\dots, N$, we have $f_{1,i},f_{2,i}\in L^{\infty}(Q_i)$ and $g_i\in L^{\infty}(\tilde{Q}_i)$. Indeed, 
	for certain fixed signs $\sigma_i$ and random sign $\varepsilon_i, \varepsilon^{'}_i$ on probability space with expectation denoted by $\mathbb{E}_{\varepsilon}$ and $\mathbb{E}_{\varepsilon'}$, we have
	\begin{align}
		\sum_{i=1}^{N}\big|\big\langle C_{b}^{k_1}(T)&(f_{1,i},f_{2,i}),g_i\big\rangle\big|
		=
		\sum_{i=1}^{N}\sigma_i\big\langle C_{b}^{k_1}(T)f_{1,i},f_{2,i},g_i\big\rangle\nonumber\nonumber\\
		=&\mathbb{E}_{\varepsilon}\mathbb{E}_{\varepsilon'}\Big\langle C_{b}^{k_1}(T)\Big(\sum_{i=1}^{N}\varepsilon_i \varepsilon^{'}_i f_{1,i},\sum_{j=1}^{N}\varepsilon^{'}_j f_{2,j}\Big),\sum_{k=1}^{N}\varepsilon_k\sigma_kg_k\Big\rangle\nonumber\\
		\le &\mathcal {N}\mathbb{E}_{\varepsilon}\mathbb{E}_{\varepsilon'}\Big\|\sum_{i=1}^{N}\varepsilon_i \varepsilon^{'}_if_{1,i}\Big\|_{L^{p_1}(\omega_1^{p_1})}\nonumber  \Big\|\sum_{i=1}^{N}\varepsilon^{'}_if_{2,i}\Big\|_{L^{p_2}(\omega_2^{p_2})}\Big\|\sum_{i=1}^{N}\varepsilon_i\sigma_ig_i\Big\|_{L^{q^\prime}((\lambda_1\omega_2)^{-q^{'}})}\nonumber\\
		\leq&\mathcal{N}\Big\|\sum_{i=1}^{N}\big|f_{1,i}\big|\Big\|_{L^{p_1}(\omega_1^{p_1})}
		\Big\|\sum_{i=1}^{N}\big|f_{2,i}\big|\Big\|_{L^{p_2}(\omega_2^{p_2})}\Big\|\sum_{i=1}^{N}\big|g_i\big|\Big\|_{L^{q^{'}}((\lambda_1\omega_2)^{-q^{'}})},\nonumber
	\end{align}
	where $\mathcal{N}=\big\|C_{b}^{k_1}(T)\big\|_{L^{p_1}(\omega_1^{p_1})\times L^{p_2}(\omega_2^{p_2})\to L^{q}((\lambda_1\omega_2)^q)}$.
			Assume $v=(\lambda_1\omega_2)^q,  \sigma_1=\omega_1^{-p_1^\prime} ,\sigma_2=\omega_2^{-p_2^\prime}$. 
			For arbitrary $\beta \in \mathbb{R}$ and $x\in \tilde{Q}_0\cap \{b\geq\beta\}$, we have
		\begin{align}
				\int_{Q_0}&(\beta-b)_{+}^{k_1}\sigma_1dy_{1}\nonumber\\
				&\leq\frac{\abs{Q_0}^{2}}{\sigma_2(Q_0)v(\tilde{Q_0}\cap \{b\geq\beta\})}\times\frac{v(\tilde{Q}_0\cap \{b\geq\beta\})\sigma_2(Q_0)}{\abs{Q_0}^2}\int_{Q_0\cap \{b\leqslant \beta\}}|\beta-b|^{k_1}\sigma_1\nonumber\\
			&\lesssim \frac{\abs{Q_0}^{2}}{\sigma_2(Q_0)v(\tilde{Q}_0\cap \{b\geq\beta\})}\big|\big\langle  v\chi_{\tilde{Q}_0\cap\{b\geq \beta\}}, \mathcal{C}_b^{k_1}(T)(\sigma_1\chi_{Q_0\cap\{b\leq\beta\}},\sigma_2\chi_{Q_0})\big\rangle\big|
			\nonumber,
		\end{align}
		In a completely analogous way, we also show that
		\begin{align}
		\int_{Q_0}(b-\beta)_{+}^{k_1}\sigma_1dy_{1}\lesssim\frac{\abs{Q_0}^{2}}{\sigma_2(Q_0)v(\tilde{Q}_0\cap \{b\leq\beta\})}\big|\big\langle  v\chi_{\tilde{Q}_0\cap\{b\leq \beta\}}, \mathcal{C}_b^{k_1}(T)(\sigma_1\chi_{Q_0\cap\{b\geq\beta\}},\sigma_2\chi_{Q_0})\big\rangle\big|\nonumber.
		\end{align}
			Denote set $E_1:=Q_0\cap\{b\leq\beta\},  E_2:=Q_0\cap\{b\geq\beta\}, \tilde{E}_1:= \tilde{Q}_0\cap\{b\geq\beta\}, \tilde{E}_2:= \tilde{Q}_0\cap\{b\leq\beta\}$.
		Let $\beta$ be a median of $b$ in $\tilde{Q}$, we get
		\begin{equation}
			\int_{Q_0}|\beta-b|^{k_1}\sigma_1\lesssim\sum_{i=1}^{2}\frac{\abs{Q_0}^{2}}{\sigma_2(Q_0)v(\tilde{E}_i)}\big|\big \langle v\chi_{\tilde{E}_i}, \mathcal{C}_b^{k_1}(T)(\sigma_1\chi_{{E}_i},\sigma_2\chi_{Q_0})\big\rangle\big|.\nonumber
		\end{equation}
		By triangle inequality and Lemm \ref{key lem3} we see that
		\begin{align}
			\int_{Q}|b-\langle b \rangle_Q|&\lesssim \int_{Q}|b-\langle b \rangle_Q^{\sigma_1^{1/{k_1}}}|\nonumber\\
			&\lesssim\sum_{Q_0\in\mathcal{S}(Q)}\frac{|Q_0|}{\sigma_1^{\frac{1}{k_1}}(Q_0)}\int_{Q_0}|b-\langle b \rangle_{Q_0}^{\sigma_1^{1/{k_1}}}|\sigma_1^{\frac{1}{k_1}}\nonumber\\
			&\lesssim\sum_{Q_0\in\mathcal{S}(Q)}\frac{|Q_0|}{\sigma_1^{\frac{1}{k_1}}(Q_0)}\int_{Q_0}|b-\beta|\sigma_1^{\frac{1}{k_1}}\nonumber\\
			&\lesssim\sum_{Q_0\in\mathcal{S}(Q)}\frac{|Q_0|^{2-\frac{1}{k_1}}}{\sigma_1^{\frac{1}{k_1}}(Q_0)}\Big(\int_{Q_0}|b-\beta|^{k_1}\sigma_1\Big)^{\frac{1}{k_1}}\nonumber.
		\end{align}
		By H\"older inequality we then get
		\begin{align}
			\sum_{Q_0\in\mathcal{S}(Q)}&\frac{|Q_0|^{2-\frac{1}{k_1}}}{\sigma_1^{\frac{1}{k_1}}(Q_0)}\Big(\int_{Q_0}|b-\beta|^{k_1}\sigma_1\Big)^{\frac{1}{k_1}}\nonumber\\
			&\leq\Big(\sum_{Q_0\in\mathcal{S}(Q)}\nu_1(Q_0)\Big)^{\frac{1}{t_1^\prime}}\Bigg(\sum_{Q_0\in\mathcal{S}(Q)}\bigg(\frac{|Q_0|^{2-\frac{1}{k_1}}}{\sigma_1^{\frac{1}{k_1}}(Q_0)\nu_1(Q_0)^{\frac{1}{t_1^\prime}}}\Big(\int_{Q_0}|b-\beta|^{k_1}\sigma_1\Big)^{\frac{1}{k_1}}\bigg)^{t_1}\Bigg)^{\frac{1}{t_1}}\nonumber\\
			&\lesssim \nu_1(Q)^{\frac{1}{t_1^\prime}}\Bigg(\sum_{Q_0\in\mathcal{S}(Q)}\bigg(\frac{|Q_0|^{2-\frac{1}{k_1}}}{\sigma_1^{\frac{1}{k_1}}(Q_0)\nu_1(Q_0)^{\frac{1}{t_1^\prime}}}\Big(\int_{Q_0}|b-\beta|^{k_1}\sigma_1\Big)^{\frac{1}{k_1}}\bigg)^{t_1}\Bigg)^{\frac{1}{t_1}}\nonumber\\
			&=:\nu_1(Q)^{\frac{1}{t_1^\prime}}h^{\frac{1}{t_1}}\nonumber.
		\end{align}
		According to the Proposition \ref{prop} and introduce an enumeration of the sparse collection $\mathcal{S}$, we have
		\begin{align}
			\Big\|&	\sum_{Q\in\mathcal{S}} \frac{\chi_{Q}(x)}{\nu_1(Q)}\int_{Q} \big|b- \langle b \rangle_{Q}\big|\Big\|_{L^{t_1}(\nu_1)}\nonumber\\
			&=\sup\limits_{\|g\|_{L^{t_1^{'}}(\nu_1)}=1} \sum_{Q\in\mathcal{S}}\frac{1}{\nu_1(Q)}\Big(\int_{Q} \big|b-\langle b \rangle _{Q}\big|\Big)  \nu_1(Q) \big\langle \abs{g} \big\rangle_{Q}^{\nu_1}\nonumber\\
			&\lesssim \sup\limits_{\|g\|_{L^{t_1^{'}}(\nu_1)}=1}	\sum_{Q\in\mathcal{S}} h^{\frac{1}{t_1}}\nu_1(Q)^{\frac{1}{t_1^\prime}}\big\langle \abs{g} \big\rangle_{Q}^{\nu_1} \nonumber\\
			&\leq\Big(\sum_{Q\in\mathcal{S}} \big(\langle \abs{g} \rangle_{Q_j}^{\nu_1}\big)^{t_1^{'}}\nu_1(Q_j)\Big)^{\frac{1}{t_1^{'}}}\Big(\sum_{Q\in\mathcal{S}}h\Big)^{\frac{1}{t_1}}\nonumber.
		\end{align}	
		 We can estimate the first factor by
		\begin{align}
			\sum_{Q\in\mathcal{S}} \Big(\big\langle g \big\rangle_{Q}^{\nu_1}\Big)^{t_1^{'}}\nu_1(Q)\lesssim	\sum_{Q\in\mathcal{S}} \inf\limits_{z\in Q}M_{\nu_1}g(z)^{t_1^{'}}\nu_1(E(Q))\lesssim \int \big(M_{\nu_1}g(z)\big)^{t_1^{'}}\nu_1 \lesssim \int g^{t_1^{'}}\nu_1.\nonumber
		\end{align}
		For the second factor, choose $r={t_1}/{k_1}={-1}/{(\alpha_1k_1)}$
		\begin{align}
			\Big(\sum_{Q\in\mathcal{S}}h\Big)^{\frac{1}{t_1}}
			&\lesssim\Big(\sum_{Q\in\mathcal{S}}\sum_{Q_0\in\mathcal{S}(Q)}\sum_{i=1}^{2}\Big( \frac{|Q_0|^{2k_1+1}\big|\big \langle v\chi_{\tilde{E}_i}, \mathcal{C}_b^{k_1}(T)(\sigma_1\chi_{{E}_i},\sigma_2\chi_{Q_0})\big\rangle\big|}{\sigma_1^{\frac{1}{k_1}}(Q_0)^{k_1}\nu_1(Q_0)^{k_1+k_1\alpha_1}\sigma_2(Q_0)v(\tilde{E}_i)}\Big)^{\frac{t_1}{k_1}}\Big)^{\frac{1}{t_1}}\nonumber\\
			&\lesssim \Big(\sum_{j=1}^{\infty}\Big( \frac{|Q_j|^{2k_1+1}\big|\big \langle v\chi_{\tilde{E}^j_i}, \mathcal{C}_b^{k_1}(T)(\sigma_1\chi_{{E}^j_i},\sigma_2\chi_{Q_j})\big\rangle\big|}{\sigma_1^{\frac{1}{k_1}}(Q_j)^{k_1}\nu_1(Q_j)^{k_1+k_1\alpha_1}\sigma_2(Q_j)v(\tilde{E}^j_i)}\Big)^{r}\Big)^{\frac{1}{rk_1}}\nonumber.
		\end{align}
		It is enough to give a uniform bound for the finite sum. We dualise this term with $\sum_{j=1}^{N} \lambda_{Q_j}^{r^{'}}\leq1$, then we have
		\begin{align}
			\Big(\sum_{j=1}^{N}&\big(\frac{|Q_j|^{2k_1+1} |\langle v\chi_{\tilde{E}^j_i}, \mathcal{C}_b^{k_1}(T)(\sigma_1\chi_{{E}^j_i},\sigma_2\chi_{Q_j})\big\rangle\big|}{\nu_1(Q_j)^{k_1+\alpha_1k_1}\sigma_1^{\frac{1}{k_1}}(Q_j)^{k_1}\sigma_2(Q_j)v(\tilde{E}^j_i)}\big)^{r} \Big)^{\frac{1}{rk_1}}\nonumber\\
			=&\Big(\sum_{j=1}^{N}\frac{|Q_j|^{2k_1+1} |\langle v\chi_{\tilde{E}^j_i}, \mathcal{C}_b^{k_1}(T)(\sigma_1\chi_{{E}^j_i},\sigma_2\chi_{Q_j})\big\rangle\big|}{\nu_1(Q_j)^{k_1+\alpha_1k_1}\sigma_1^{\frac{1}{k_1}}(Q_j)^{k_1}\sigma_2(Q_j)v(\tilde{E}^j_i)}\lambda_{Q_j} \Big)^{\frac{1}{k_1}}\nonumber\\
			=& \Big(\sum_{j=1}^{N}\frac{|Q_j|^{2k_1+1} \big|\big\langle v\chi_{\tilde{E}^j_i}\lambda_{Q_j}^{\frac{r^\prime}{q^\prime}}, \mathcal{C}_b^{k_1}(T)(\sigma_1\chi_{{E}^j_i}\lambda_{Q_j}^{\frac{r^\prime}{p_1}},\sigma_2\chi_{Q_j}\lambda_{Q_j}^{\frac{r^\prime}{p_2}})\big\rangle\big|}{\nu_1(Q_j)^{k_1\alpha_1+k_1}\sigma_1^{\frac{1}{k_1}}(Q_j)^{k_1}\sigma_2(Q_j)v(\tilde{E}^j_i)}\Big)^{\frac{1}{k_1}}\nonumber \\
			=&\Big(\sum_{j=1}^{N}\big|\big\langle v\chi_{\tilde{E}^j_i}\lambda_{Q_j}^{\frac{r^\prime}{q^\prime}}v(Q_j)^{-\frac{1}{q^\prime}}, \mathcal{C}_b^{k_1}(T)(\sigma_1\chi_{{E}^j_i}\lambda_{Q_j}^{\frac{r^\prime}{p_1}}\sigma_1(Q_j)^{-\frac{1}{p_1}},\sigma_2\chi_{Q_j}\lambda_{Q_j}^{\frac{r^\prime}{p_2}}\sigma_2(Q_j)^{-\frac{1}{p_2}})\big\rangle\big|\nonumber\\
			&\quad\quad\times\frac{|Q_j|^{2k_1+1} v(Q_j)^{\frac{1}{q^\prime}}\sigma_1(Q_j)^{\frac{1}{p_1}}\sigma_2(Q_j)^{\frac{1}{p_2}}}{\nu_1(Q_j)^{k_1\alpha_1+k_1}\sigma_1^{\frac{1}{k_1}}(Q_j)^{k_1}\sigma_2(Q_j)v(\tilde{E}^j_i)}\Big)^{\frac{1}{k_1}},\nonumber
		\end{align}
		since $v, \sigma_1$ are $A_\infty$ weight and $\beta$ is median of $b$, we have
		$$v(\tilde{E}^j_i)\thicksim v(\tilde{Q}_j)\thicksim_A v(Q_j), \sigma_1(E^j_i)\thicksim\sigma_1(Q_j).$$
		Hence
		\begin{align*}
			\frac{|Q_j|^{2k_1+1} v(Q_j)^{\frac{1}{q^\prime}}\sigma_1(Q_j)^{\frac{1}{p_1}}\sigma_2(Q_j)^{\frac{1}{p_2}}}{\nu_1(Q_j)^{k_1\alpha_1+k_1}\sigma_1^{\frac{1}{k_1}}(Q_j)^{k_1}\sigma_2(Q_j)v(\tilde{E}^j_i)} 
			\simeq\frac{ \langle v \rangle_{Q_j}^{\frac{1}{q^\prime}} \langle \sigma_1 \rangle_{Q_j}^{\frac{1}{p_1}} \langle \sigma_2 \rangle_{Q_j}^{\frac{1}{p_2}}}{ \langle \nu_1 \rangle_{Q_j}^{k_1\alpha_1+k_1} \langle \sigma_1^{\frac{1}{k_1}} \rangle_{Q_j}^{k_1} \langle \sigma_2 \rangle_{Q_j} \langle v \rangle_{Q_j}}\nonumber
		\end{align*}
		Since $1+\alpha_1<1$, we apply H\"older's inequality to estimate $\langle \nu_1^{1+\alpha_1} \rangle_{Q_j}$. Furthermore, since $\sigma_1\in A_{2p_1^\prime}$, we   use Lemma \ref{key lem2} for $\langle \sigma_1\rangle_{Q_j}$. Then,
		\begin{equation}
			\frac{ \langle v \rangle_{Q_j}^{\frac{1}{q^\prime}} \langle \sigma_1 \rangle_{Q_j}^{\frac{1}{p_1}} \langle \sigma_2 \rangle_{Q_j}^{\frac{1}{p_2}}}{ \langle \nu_1 \rangle_{Q_j}^{k_1\alpha_1+k_1} \langle \sigma_1^{\frac{1}{k_1}} \rangle_{Q_j}^{k_1} \langle \sigma_2 \rangle_{Q_j} \langle v \rangle_{Q_j}}
			\lesssim\frac{ \langle v \rangle_{Q_j}^{\frac{1}{q^\prime}} \langle \sigma_1 \rangle_{Q_j}^{\frac{1}{p_1}} \langle \sigma_2 \rangle_{Q_j}^{\frac{1}{p_2}}}{ \langle \nu_1^{1+\alpha_1} \rangle_{Q_j}^{k_1} \langle \sigma_1 \rangle_{Q_j} \langle \sigma_2 \rangle_{Q_j} \langle v \rangle_{Q_j}}.\nonumber
		\end{equation}
		 Lemma \ref{key lem2} can be applied under the condition that  $1\in A_{\infty}$,
		 \begin{equation}
		 \frac{ \langle v \rangle_{Q_j}^{\frac{1}{q^\prime}} \langle \sigma_1 \rangle_{Q_j}^{\frac{1}{p_1}} \langle \sigma_2 \rangle_{Q_j}^{\frac{1}{p_2}}}{ \langle \nu_1^{1+\alpha_1} \rangle_{Q_j}^{k_1} \langle \sigma_1 \rangle_{Q_j} \langle \sigma_2 \rangle_{Q_j} \langle v \rangle_{Q_j}}
		 = \frac{ 1}{ \langle \nu_1^{1+\alpha_1} \rangle_{Q_j}^{k_1} \langle \sigma_1 \rangle_{Q_j}^{\frac{1}{p_1^\prime}} \langle \sigma_2 \rangle_{Q_j}^{\frac{1}{p_2^\prime}} \langle v \rangle_{Q_j}^{\frac{1}{q}}}
		 \lesssim 1. \nonumber
		 \end{equation}
		  Now, we arrive at
		\begin{align}
			\sum_{j=1}^{N}&\frac{|Q_j|^{2k_1+1} \big|\big\langle v\chi_{\tilde{E}^j_i}\lambda_{Q_j}^{\frac{r^\prime}{q^\prime}}, \mathcal{C}_b^{k_1}(T)(\sigma_1\chi_{{E}^j_i}\lambda_{Q_j}^{\frac{r^\prime}{p_1}},\sigma_2\chi_{Q_j}\lambda_{Q_j}^{\frac{r^\prime}{p_2}})\big\rangle\big|}{\nu_1(Q_j)^{k_1\alpha_1+k_1}\sigma_1^{\frac{1}{k_1}}(Q_j)^{k_1}\sigma_2(Q_j)v(\tilde{E}^j_i)}\nonumber\\
			&\lesssim\sum_{j=1}^{N}\big|\big\langle v\chi_{\tilde{E}^j_i}\lambda_{Q_j}^{\frac{r^\prime}{q^\prime}}v(Q_j)^{-\frac{1}{q^\prime}}, \mathcal{C}_b^{k_1}(T)(\sigma_1\chi_{{E}^j_i}\lambda_{Q_j}^{\frac{r^\prime}{p_1}}\sigma_1(Q_j)^{-\frac{1}{p_1}},\sigma_2\chi_{Q_j}\lambda_{Q_j}^{\frac{r^\prime}{p_2}}\sigma_2(Q_j)^{-\frac{1}{p_2}})\big\rangle\big|\nonumber\\
			&\leq\mathcal{N} \Big\|\sum_{j=1}^{N}v\chi_{\tilde{E}^j_i}\lambda_{Q_j}^{\frac{r^\prime}{q^\prime}}v(Q_j)^{-\frac{1}{q^\prime}}\Big\|_{L^{q^{'}}\big((\lambda_1\omega_2)^{-q^{\prime}}\big)}\times\Big\|\sum_{j=1}^{N}\sigma_1\chi_{{E}^j_i}\lambda_{Q_j}^{\frac{r^\prime}{p_1}}\sigma_1(Q_j)^{-\frac{1}{p_1}}\Big\|_{L^{p_1}(\omega_1^{p_1})}\nonumber\\
			&\quad\times\Big\|\sum_{j=1}^{N}\sigma_2\chi_{Q_j}\lambda_{Q_j}^{\frac{r^\prime}{p_2}}\sigma_2(Q_j)^{-\frac{1}{p_2}}\Big\|_{L^{p_2}(\omega_2^{p_2})}.\nonumber
		\end{align}
		Hence, we use the disjoint major subsets $E(Q_j)\subset Q_j$,
		\begin{equation}
			\Big\|\sum_{j=1}^{N}\sigma_2\chi_{Q_j}\lambda_{Q_j}^{\frac{r^\prime}{p_2}}\sigma_2(Q_j)^{-\frac{1}{p_2}}\Big\|_{L^{p_2}(\omega_2^{p_2})}\lesssim \Big(\sum_{j=1}^{N}\lambda_{Q_j}^{r^{'}}\sigma_2(Q_j)^{-1}\sigma_2\big(E(Q_j)\big)\Big)^{\frac{1}{p_2}}\lesssim 1, \nonumber
		\end{equation}   
		the other two factors are similar and  then finishing the prove.
\end{proof}
	Next, we will prove $\mathcal{C}_{b}^{k_1}(T)$ is $L^{p_1}(\omega_1^{p_1})\times L^{p_2}(\omega_2^{p_2})\rightarrow L^q((\lambda_1\omega_2)^{q})$ bounded with bloom weight. First in Theorem \ref{mmain thm}, we consider one of the sparse forms in the conclusion of Theorem \ref{main thm}.
	Following lemmas will be needed to prove Theorem \ref{mmain thm}.
\begin{lem}\emph{(}\cite[(2.4)]{MR2086703}\emph{)}\label{lem 11}
	Let $p\in [1,\infty)$ and $\lambda_Q\geq0$ for all $Q\in \mathscr{D}$. Then we have
	$$\Big(\sum_{Q\in\mathscr{D}}\lambda_Q\chi_Q\Big)^p\leq p\sum_{Q\in\mathscr{D}}\lambda_Q\chi_Q\Big(\sum_{Q^\prime\in\mathscr{D}:Q^\prime\subset Q}\lambda_{Q^\prime}\chi_{Q^\prime}\Big)^{p-1}.$$
\end{lem}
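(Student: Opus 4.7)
The plan is to reduce the inequality to a pointwise statement along the nested chain of dyadic cubes containing a fixed point, and then invoke a one-variable telescoping argument based on the convexity of $t \mapsto t^p$.

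\textbf{Step 1: Pointwise reduction.} Fix $x \in \mathbb{R}^n$. The dyadic cubes of $\mathscr{D}$ containing $x$ form a chain, which I enumerate in decreasing order as $\cdots \supsetneq Q_{-1} \supsetneq Q_0 \supsetneq Q_1 \supsetneq \cdots$. Set $a_i := \lambda_{Q_i}$. Then at the point $x$,
\[
\Big(\sum_{Q \in \mathscr{D}} \lambda_Q \chi_Q\Big)(x) = \sum_{i} a_i =: T,
\]
while for each $Q = Q_i$, since $\{Q' \in \mathscr{D} : Q' \subset Q_i,\ Q' \ni x\} = \{Q_j : j \geq i\}$,
\[
\Big(\sum_{Q' \subset Q_i} \lambda_{Q'} \chi_{Q'}\Big)(x) = \sum_{j \geq i} a_j =: S_i.
\]
Thus the inequality at $x$ becomes $T^p \leq p \sum_i a_i S_i^{p-1}$, where $S_i$ is monotonically nonincreasing in $i$, $S_i \to T$ as $i \to -\infty$, and $S_i \to 0$ as $i \to +\infty$ (assuming $T < \infty$; otherwise both sides of the claim are $+\infty$ and there is nothing to prove).

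\textbf{Step 2: Elementary convexity inequality.} Since $p \geq 1$, the function $t \mapsto t^p$ is convex on $[0,\infty)$, so for any $0 \leq a \leq b$ one has
\[
b^p - a^p \leq p\, b^{p-1}(b-a).
\]
Applying this with $b = S_i$ and $a = S_{i+1}$ (noting $a_i = S_i - S_{i+1}$), I obtain
\[
S_i^p - S_{i+1}^p \leq p\, a_i\, S_i^{p-1}.
\]

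\textbf{Step 3: Telescoping.} Summing the previous estimate over $i \in \mathbb{Z}$, the left side telescopes to $\lim_{i\to-\infty} S_i^p - \lim_{i\to+\infty} S_i^p = T^p$, giving
\[
T^p \leq p \sum_i a_i\, S_i^{p-1},
\]
which is exactly the pointwise form of the desired inequality. Integrating (or rather, since the estimate holds at every $x$, simply unraveling) gives the stated lemma.

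The only subtlety is bookkeeping: the chain of ancestors of $x$ in $\mathscr{D}$ may be doubly infinite, so I must justify that the telescoping sum converges to $T^p$. This follows from the standing assumption $\lambda_Q \geq 0$, monotone convergence for the tails $S_i$, and the observation that if $T = \infty$ the right-hand side is trivially infinite. No other steps present any real difficulty; the proof is essentially the discrete analogue of $\int_0^T p\, t^{p-1}\, dt = T^p$.
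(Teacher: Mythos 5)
Your proof is correct, and since the paper merely cites this inequality from the reference rather than proving it, you have supplied a complete argument from scratch. The pointwise reduction along the chain of dyadic ancestors of a fixed point $x$, followed by the convexity bound $b^p - a^p \le p\,b^{p-1}(b-a)$ and telescoping, is exactly the standard route to this discrete integration-by-parts inequality; your observation that the estimate is the discrete analogue of $\int_0^T p\,t^{p-1}\,dt = T^p$ captures the essence. One remark worth making explicit: the argument requires that $\subset$ in the inner sum be read as $\subseteq$ (i.e.\ $Q'=Q$ is allowed), which is the convention in the cited source and is necessary for the inequality to hold — with strict inclusion the term $\sum_i a_i^2$ would be lost and already the case $p=2$ fails. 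You implicitly adopt the correct convention (your tails are $S_i = \sum_{j \ge i} a_j$, so $a_i = S_i - S_{i+1}$), but stating it would head off a possible misreading. The handling of the $T = \infty$ case and of the doubly infinite chain via finite partial sums and the limits $S_{-N}\to T$, $S_{M+1}\to 0$ is also correct.
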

\begin{lem}\emph{(}\cite[Proposition 2.2]{MR2086703}\emph{)}\label{lem 12}
	Let $p\in(1,\infty]$, let $\omega$ be a weight and let $\lambda_Q\geqslant0$ for all $Q\in\mathscr{D}$. Then
	$$\Big\|\sum_{Q\in\mathscr{D}}\lambda_Q\chi_Q\Big\|_{L^p(\omega)}\simeq\Big(\sum_{Q\in\mathscr{D}}\lambda_Q\big(\frac{1}{\omega(Q)}\sum_{Q^\prime\in\mathscr{D},Q^\prime\subset Q}\lambda_{Q^\prime}\omega(Q^\prime)\big)^{p-1}\omega(Q)\Big)^{\frac{1}{p}}.$$
\end{lem}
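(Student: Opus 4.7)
The plan is to write $f:=\sum_{Q\in\mathscr{D}}\lambda_Q\chi_Q$, to set $A_Q:=\frac{1}{\omega(Q)}\sum_{Q'\subset Q}\lambda_{Q'}\omega(Q')$ and $g_Q:=\sum_{Q'\subset Q}\lambda_{Q'}\chi_{Q'}$, so that $A_Q=\langle g_Q\rangle_Q^{\omega}$ and the target equivalence reads $\|f\|_{L^p(\omega)}^p\simeq \sum_Q\lambda_Q A_Q^{p-1}\omega(Q)$, and to prove the two directions separately.

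For the direction $\sum_Q\lambda_Q A_Q^{p-1}\omega(Q)\lesssim \|f\|_{L^p(\omega)}^p$ I would exploit the pointwise bound $g_Q\leq f$ on $Q$. This yields $A_Q=\langle g_Q\rangle_Q^{\omega}\leq \langle f\rangle_Q^{\omega}\leq M_{1,\omega}^{\mathscr{D}}f(x)$ for every $x\in Q$, where $M_{1,\omega}^{\mathscr{D}}$ denotes the weighted dyadic maximal operator. Hence $A_Q^{p-1}\omega(Q)\leq \int_Q (M_{1,\omega}^{\mathscr{D}}f)^{p-1}\omega$, and multiplying by $\lambda_Q$, summing, and swapping the order of summation yields
$$\sum_Q\lambda_Q A_Q^{p-1}\omega(Q)\leq \int f\cdot (M_{1,\omega}^{\mathscr{D}}f)^{p-1}\omega.$$
H\"older's inequality in the pair $(p,p')$ combined with Doob's weighted maximal inequality $\|M_{1,\omega}^{\mathscr{D}}f\|_{L^p(\omega)}\lesssim\|f\|_{L^p(\omega)}$ closes this direction.

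For the reverse direction I would start from the pointwise inequality furnished by Lemma \ref{lem 11}, namely $f^p\leq p\sum_Q\lambda_Q\chi_Q g_Q^{p-1}$, integrate against $\omega$, and reduce the problem to the local estimate $\int_Q g_Q^{p-1}\omega\lesssim_p A_Q^{p-1}\omega(Q)$. When $p\in(1,2]$ the exponent $p-1\in(0,1]$ makes $t\mapsto t^{p-1}$ concave, and Jensen's inequality applied to the probability measure $\omega\ud x/\omega(Q)$ delivers the bound immediately since $\langle g_Q\rangle_Q^{\omega}=A_Q$. When $p>2$, however, the map $t\mapsto t^{p-1}$ becomes convex and Jensen is pointing the wrong way; in that range I would instead run a stopping-time decomposition of the descendants of $Q$ based on doublings of the averages $A_{Q'}$, observe that on each stopping layer $A_{Q'}$ is comparable to the parent principal value so that $g_{Q'}^{p-1}$ can be replaced by a constant comparable to $A_{Q'}^{p-1}$, and then sum the geometric series produced by the sparseness of the stopping family.

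The main obstacle is exactly the convex regime $p>2$: a purely iterative application of Lemma \ref{lem 11} spawns nested sums whose total mass cannot be folded back into the target form without doubling control on the $A_{Q'}$. The stopping-time device is the essential new ingredient there, and the proof has to verify that the stopping cubes exhaust $Q$ with the required sparseness and that the geometric decay between consecutive stopping layers compensates for the convexity loss.
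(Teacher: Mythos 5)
The paper does not give a proof of this lemma; it is quoted directly from Cascante--Ortega--Verbitsky \cite{MR2086703}, so there is no in-paper argument to compare against. Your lower bound direction is correct and complete: from $g_Q\le f$ on $Q$ you get $A_Q=\langle g_Q\rangle_Q^{\omega}\le\langle f\rangle_Q^{\omega}$, hence $A_Q^{p-1}\omega(Q)\le\int_Q(M^{\mathscr{D}}_\omega f)^{p-1}\omega$ for the weighted dyadic maximal function; summing, swapping the order, and finishing with H\"older and the weighted Doob inequality is the standard route. The upper bound direction via Lemma~\ref{lem 11} plus Jensen applied to the probability measure $\omega\,dx/\omega(Q)$ is likewise complete and correct for $p\in(1,2]$.

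The range $p>2$ contains a genuine gap, and it is not a gap that a stopping-time decomposition inside a single $Q$ can fill, because the local inequality $\int_Q g_Q^{p-1}\omega\lesssim A_Q^{p-1}\omega(Q)$ to which you reduce is actually \emph{false} there. Take $Q$ with dyadic children $Q_1,Q_2$, set $\lambda_{Q_1}=1$ and all other $\lambda_{Q'}=0$, and let $\omega(Q_1)=\delta\,\omega(Q)$. Then $g_Q=\chi_{Q_1}$, so $\int_Q g_Q^{p-1}\omega=\omega(Q_1)$, whereas $A_Q=\delta$ and $A_Q^{p-1}\omega(Q)=\delta^{p-2}\omega(Q_1)$; the ratio $\delta^{2-p}$ blows up as $\delta\to0$ for $p>2$, and adding a small $\lambda_Q>0$ does not rescue it. The inequality $\sum_Q\lambda_Q\int_Q g_Q^{p-1}\omega\lesssim\sum_Q\lambda_Q A_Q^{p-1}\omega(Q)$ is true, but only because the excess in a cube like $Q$ above is absorbed by the term for $Q_1$ on the right; this is a cross-cube cancellation and must be organized on the full sum, not cube by cube. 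Your sketch also asserts ``a geometric series produced by the sparseness of the stopping family,'' but for a generic weight $\omega$ the stopping cubes yield only a Carleson packing condition, not geometric decay of $\omega$-measure between layers (that would need $A_\infty$-type self-improvement, which is not assumed). For $p>2$ one must instead run a global argument --- e.g.\ a principal-cubes construction applied simultaneously over all $Q$, or dualize against $L^{p'}(\omega)$ with $p'\in(1,2)$ and invoke a weighted Carleson embedding --- rather than a local replacement for Jensen.
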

\begin{lem}\emph{(}\cite[Theorem 2.2]{MR3000426}\emph{)}\label{reference lemma1}
	If $0\leqslant \alpha <1, 1<p\leqslant {1}/{\alpha}$, and ${1}/{q}={1}/{p}-\alpha$, then
	\begin{align}
		\|M^{\mathscr{D}}_{\alpha,\mu}(f)\|_{L^q(\mu)}=\Big\|\sup \limits_{Q\in\mathscr{D}} \frac{\chi_{Q}}{\mu(Q)^{1-\alpha}}\int_{Q}|f|d\mu \Big\|_{L^q(\mu)}\lesssim\|f\|_{L^p(\mu)}.\nonumber
	\end{align}
\end{lem}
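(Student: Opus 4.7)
The plan is to derive this via Marcinkiewicz interpolation between two endpoint estimates, which is the standard route for fractional dyadic maximal operators with respect to a measure. The upper endpoint $p=1/\alpha$, $q=\infty$ is immediate from H\"older's inequality inside each dyadic cube: for any $Q\in\mathscr{D}$,
\[
\frac{1}{\mu(Q)^{1-\alpha}}\int_Q|f|\,d\mu\le\mu(Q)^{\alpha-1}\cdot\mu(Q)^{1-\alpha}\|f\|_{L^{1/\alpha}(\mu)}=\|f\|_{L^{1/\alpha}(\mu)},
\]
so $M^{\mathscr{D}}_{\alpha,\mu}:L^{1/\alpha}(\mu)\to L^\infty(\mu)$ trivially.

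For the lower endpoint I would establish the weak-type bound $L^1(\mu)\to L^{1/(1-\alpha),\infty}(\mu)$ by a Calder\'on--Zygmund stopping-time argument adapted to $\mu$. Fix $\lambda>0$ and select the maximal family $\{Q_j\}\subset\mathscr{D}$ satisfying $\mu(Q_j)^{-(1-\alpha)}\int_{Q_j}|f|\,d\mu>\lambda$. By maximality these cubes are pairwise disjoint, and their union coincides with the superlevel set $\{M^{\mathscr{D}}_{\alpha,\mu}(f)>\lambda\}$. Rearranging the stopping inequality gives $\mu(Q_j)\le(\lambda^{-1}\int_{Q_j}|f|\,d\mu)^{1/(1-\alpha)}$, and since $r:=1/(1-\alpha)\ge 1$ the $\ell^1\hookrightarrow\ell^r$ embedding $\sum a_j^r\le(\sum a_j)^r$ combined with disjointness yields
\[
\mu\bigl(\{M^{\mathscr{D}}_{\alpha,\mu}(f)>\lambda\}\bigr)\le\sum_j\mu(Q_j)\le\lambda^{-1/(1-\alpha)}\Bigl(\sum_j\int_{Q_j}|f|\,d\mu\Bigr)^{1/(1-\alpha)}\le\lambda^{-1/(1-\alpha)}\|f\|_{L^1(\mu)}^{1/(1-\alpha)},
\]
which is the desired weak-type bound.

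With both endpoint estimates in hand, Marcinkiewicz interpolation between $L^1(\mu)\to L^{1/(1-\alpha),\infty}(\mu)$ and $L^{1/\alpha}(\mu)\to L^\infty(\mu)$ produces the strong $(p,q)$ bound for every $1<p<1/\alpha$ with $1/q=1/p-\alpha$, while the endpoints themselves cover $p=1/\alpha$. The case $\alpha=0$ collapses to the usual $L^p(\mu)$ boundedness of the dyadic Hardy--Littlewood maximal operator, handled by the same template (weak $(1,1)$ plus the trivial $L^\infty$ bound). The one place where care is needed is the direction of the $\ell^r$ embedding in the weak endpoint; this works precisely because $\alpha\ge 0$ forces $r\ge 1$. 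Notably, no doubling or regularity hypothesis on $\mu$ intervenes, since the argument uses only disjointness of the stopping cubes together with their defining inequality.
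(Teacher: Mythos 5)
The paper invokes this lemma as a cited result from the literature and does not reproduce a proof, so there is no internal argument to compare against; your proposal supplies one, and it is correct. Your route is the standard one: the weak endpoint $L^1(\mu)\to L^{1/(1-\alpha),\infty}(\mu)$ via the maximal stopping-cube decomposition (rearranging the stopping inequality, using disjointness and $1/(1-\alpha)\ge 1$ to pass from $\sum_j\mu(Q_j)^{1-\alpha}$-type control to a bound on $\sum_j\mu(Q_j)$), the strong endpoint $L^{1/\alpha}(\mu)\to L^\infty(\mu)$ by H\"older on a single cube, and Marcinkiewicz interpolation to fill in $1<p<1/\alpha$. The exponent arithmetic checks out: for the segment between the endpoints one has $1/p-1/q=\alpha$, which is precisely the stated relation, and the case $\alpha=0$ collapses to the ordinary $L^p(\mu)$-boundedness of the dyadic maximal operator. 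One small point you pass over silently is the existence of the maximal stopping cubes when $\mu(\mathbb{R}^n)<\infty$: an increasing chain of dyadic cubes all satisfying the stopping condition need not have a maximal element. In that situation the superlevel set is all of $\mathbb{R}^n$ and the weak-type bound $\mu(\mathbb{R}^n)\le\bigl(\lambda^{-1}\|f\|_{L^1(\mu)}\bigr)^{1/(1-\alpha)}$ follows by applying the defining inequality to a large enough cube and taking limits, so the conclusion is unaffected; when $\mu(\mathbb{R}^n)=\infty$ the stopping quantity decays along increasing chains and the maximal cubes exist outright.
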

\begin{lem}\label{lem 13}
	Let $1<p\leqslant q <\infty$, set $\alpha={1}/{p}-{1}/{q}$ and let $\zeta\in A_{\infty}$. For any sparse family of $\mathcal{S} \subset \mathscr{D}$ and $f\in L^p(\zeta)$ we have
	$$\big\|\sum_{Q\in \mathcal{S}} \langle |f| \rangle ^{\zeta}_{\frac{1}{1+\alpha},Q}\zeta(Q)^{\alpha}\chi_{Q}\big\|_{L^q(\zeta)}\lesssim\|f\|_{L^p(\zeta)} .$$
\end{lem}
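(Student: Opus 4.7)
The plan is to dualize against $L^{q'}(\zeta)$, exploit the fact that $\zeta\in A_\infty$ upgrades sparseness with respect to Lebesgue measure to sparseness with respect to $\zeta$, and then reduce to the standard dyadic fractional maximal inequality of Lemma \ref{reference lemma1} via the substitution $g:=|f|^{1/(1+\alpha)}$.

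Set $F:=\sum_{Q\in\mathcal S}\lambda_Q\chi_Q$ with $\lambda_Q:=\zeta(Q)^\alpha\langle|f|\rangle^\zeta_{1/(1+\alpha),Q}$, and let $h\geqslant 0$ with $\|h\|_{L^{q'}(\zeta)}=1$; by duality it suffices to control $\int F h\,d\zeta = \sum_Q\lambda_Q\zeta(Q)\langle h\rangle_Q^\zeta$. The sparseness of $\mathcal S$ gives pairwise disjoint $E_Q\subset Q$ with $|E_Q|\geqslant\eta|Q|$, and since $\zeta\in A_\infty$ the reverse $A_\infty$ property yields $\zeta(E_Q)\gtrsim\zeta(Q)$ uniformly in $Q\in\mathcal S$.

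Introduce the weighted fractional maximal operator $M^{\mathscr D}_{\alpha,\zeta,r}(f)(x):=\sup_{Q\ni x,\,Q\in\mathscr D}\zeta(Q)^\alpha\langle|f|\rangle^\zeta_{r,Q}$ and the dyadic $\zeta$-maximal operator $M^\mathscr{D}_\zeta$. For $x\in E_Q$ one has $\lambda_Q\leqslant M^{\mathscr D}_{\alpha,\zeta,1/(1+\alpha)}(f)(x)$ and $\langle h\rangle_Q^\zeta\leqslant M^{\mathscr D}_\zeta(h)(x)$, so the disjointness of $\{E_Q\}$ combined with H\"older and the $L^{q'}(\zeta)$-boundedness of $M^{\mathscr D}_\zeta$ (since $q'>1$) gives
$$\sum_{Q\in\mathcal S}\lambda_Q\zeta(Q)\langle h\rangle^\zeta_Q \lesssim \int M^{\mathscr D}_{\alpha,\zeta,1/(1+\alpha)}(f)\cdot M^\mathscr{D}_\zeta(h)\,d\zeta \lesssim \big\|M^{\mathscr D}_{\alpha,\zeta,1/(1+\alpha)}(f)\big\|_{L^q(\zeta)}.$$

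The final step is to control the maximal-function norm on the right. Writing $g:=|f|^{1/(1+\alpha)}$, one has $\langle|f|\rangle^\zeta_{1/(1+\alpha),Q}=(\langle g\rangle_Q^\zeta)^{1+\alpha}$, so
$$\zeta(Q)^\alpha\langle|f|\rangle^\zeta_{1/(1+\alpha),Q}=\big(\zeta(Q)^{\alpha/(1+\alpha)}\langle g\rangle_Q^\zeta\big)^{1+\alpha},$$
and therefore $M^{\mathscr D}_{\alpha,\zeta,1/(1+\alpha)}(f)=\big(M^{\mathscr D}_{\alpha/(1+\alpha),\zeta}(g)\big)^{1+\alpha}$. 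Setting $\tilde\alpha:=\alpha/(1+\alpha)$, $\tilde p:=p(1+\alpha)$ and $\tilde q:=q(1+\alpha)$, one checks $\tilde\alpha<1$, $1<\tilde p\leqslant 1/\tilde\alpha$ and $1/\tilde p-\tilde\alpha=1/\tilde q$, so Lemma \ref{reference lemma1} (applied in the measure space $(\mathbb{R}^n,\zeta)$) yields
$$\big\|M^{\mathscr D}_{\alpha,\zeta,1/(1+\alpha)}(f)\big\|_{L^q(\zeta)}=\big\|M^{\mathscr D}_{\tilde\alpha,\zeta}(g)\big\|_{L^{\tilde q}(\zeta)}^{1+\alpha}\lesssim\|g\|_{L^{\tilde p}(\zeta)}^{1+\alpha}=\|f\|_{L^p(\zeta)},$$
which finishes the proof. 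The main obstacle is recognising the right substitution $g=|f|^{1/(1+\alpha)}$ that converts the $L^{1/(1+\alpha)}$-average inside the fractional maximal function into a standard $L^1$-average; once this is seen, the argument is a direct combination of duality, sparseness with respect to $\zeta$, and Lemma \ref{reference lemma1}.
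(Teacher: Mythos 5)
Your proof is correct. It follows the same skeleton as the paper's: dualize against $L^{q'}(\zeta)$, use $\zeta\in A_\infty$ to pass from $\zeta(Q)$ to $\zeta(E_Q)$, insert pointwise maximal-function majorants on the disjoint sets $E_Q$, apply H\"older, and close with Lemma \ref{reference lemma1} plus the standard weighted maximal inequality. The only real difference is where the fractional weight $\zeta(Q)^\alpha$ is placed in the H\"older split: the paper groups it with the dual function $g$, splits $L^p(\zeta)\times L^{p'}(\zeta)$, and invokes Lemma \ref{reference lemma1} directly for $M^{\mathscr D}_{\alpha,\zeta}(g)\colon L^{q'}(\zeta)\to L^{p'}(\zeta)$, handling the $f$-side by the (non-fractional) maximal operator $M_{1/(1+\alpha),\zeta}$ on $L^p(\zeta)$; you group $\zeta(Q)^\alpha$ with $f$, split $L^q(\zeta)\times L^{q'}(\zeta)$, handle the dual function by $M^{\mathscr D}_\zeta$ on $L^{q'}(\zeta)$, and reduce the $f$-side to Lemma \ref{reference lemma1} via the substitution $g=|f|^{1/(1+\alpha)}$ (with exponents $\tilde\alpha=\alpha/(1+\alpha)$, $\tilde p=p(1+\alpha)$, $\tilde q=q(1+\alpha)$). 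Both arrangements are valid and of comparable effort; the paper's avoids the power-substitution by exploiting that $M_{1/(1+\alpha),\zeta}$ is already $L^p(\zeta)$-bounded since $p>1/(1+\alpha)$, while yours trades that for a clean reduction to the $L^1$-average version of the fractional maximal function.
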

\begin{proof}
	\begin{align}
		\big\|\sum_{Q\in \mathcal{S}} \langle |f| \rangle ^{\zeta}_{\frac{1}{1+\alpha},Q}\zeta(Q)^{\alpha}\chi_{Q}\big\|_{L^q(\zeta)}&=\sup\limits_{\|g\|_{L^{q^\prime}(\zeta)}\leqslant1}\sum_{Q\in \mathcal{S}} \langle |f| \rangle ^{\zeta}_{\frac{1}{1+\alpha},Q}\zeta(Q)^{\alpha}\langle g \rangle ^{\zeta}_Q \zeta(Q)\nonumber\\
		&\lesssim \sup\limits_{\|g\|_{L^{q^\prime}(\zeta)}\leqslant1}\sum_{Q\in \mathcal{S}} \int_{E(Q)}\langle M_{\frac{1}{1+\alpha},\zeta}(f) \rangle ^{\zeta}_{Q}\zeta(Q)^{\alpha}\langle g \rangle ^{\zeta}_Q \zeta\nonumber\\
		&\leq \|M_{\frac{1}{1+\alpha},\zeta}(f)\|_{L^p(\zeta)}\cdot\|M^{\mathscr{D}}_{\alpha,\zeta}(g)\|_{L^{p^\prime}(\zeta)}\nonumber\\
		&\lesssim \|f\|_{L^p(\zeta)}\nonumber.
	\end{align}
\end{proof}
	\begin{thm}\label{mmain thm}Let $1\leqslant r_1<p_1,q_1\leqslant \infty, 1\leqslant r_2 <p_2<\infty, 1<p,q <s \leqslant \infty, {1}/{p_1}+{1}/{p_2}={1}/{p} , {1}/{q_1}+{1}/{p_2}={1}/{q}, k_1\in \mathbb{N} $ and $b\in L_{\rm{loc}}^1(\mathbb{R}^n) $. 
	Let $\vec{q}=(q_1,p_2) , \vec{p}=(p_1, p_2 ), \vec{r}=(r_1,r_2,s^\prime)$. Assume that $(\lambda_1, \omega_2 )\in A_{\vec{q},\vec{r}}, (\omega_1, \omega_2 )\in A_{\vec{p},\vec{r}} $. For any sparse family $\mathcal{S} \subset \mathscr{D}$ , $f_1\in L^{p_1}(\omega_1^{p_1})$, $f_2  \in L^{p_2}(\omega_2^{p_2}) $ and $g\in L^{q^{\prime}}\big((\lambda_1 \omega_2)^{-q^{\prime}}\big)$, for Bloom weight $\nu_1$ defined by (\ref{ fomulater1}), $\nu_1\in A_{\infty}$ and $\alpha_1:=  -{1}/{t_1}:={1}/{p_1k_1}-{1}/{q_1k_1}$,  we have
		\begin{align}
			\sum_{Q\in \mathcal{S}}&\big\langle {|b-\langle{b}\rangle_{Q}|^{k_1}|f_1|}\big\rangle_{r_1,Q}\langle |f_2|\rangle_{r_2,Q}\langle |g|\rangle_{s^\prime,Q}|Q|\leqslant  C(\omega_1,\omega_2,\lambda_1) \nonumber\\
			&\quad\quad\quad\times \|f_1\omega_1\|_{L^{p_1}}\|f_2\omega_2\|_{L^{p_2}}\big\|g(\lambda_1 \omega_2)^{-1}\big\|_{L^{q^{\prime}}}
			\begin{cases}
				\|b\|_{BMO_{\nu_1}^{\alpha_1}}^{k_1},\quad& p_1\leqslant q_1,\nonumber\\
				\big\|M_{\nu_1} ^\#(b)\big\|_{L^{t_1}(\nu_1)}^{k_1},\quad &q_1< p_1,
			\end{cases}
		\end{align}
	\end{thm}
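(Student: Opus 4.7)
My plan is to reduce both cases to the weighted sparse form estimate of Theorem \ref{mmmain thm} by using Hölder's inequality to separate the $|b-\langle b\rangle_Q|^{k_1}$ factor from $|f_1|$, and then trading the resulting Bloom-type factor back into the weight system via the identity $\nu_1^{k_1(1+\alpha_1)} = \omega_1\lambda_1^{-1}$ coming from \eqref{ fomulater1}. This identity is the bridge that will allow us to replace $\omega_1$ by $\lambda_1$ on $f_1$'s weight, so that the hypothesis $(\lambda_1, \omega_2) \in A_{\vec{q}, \vec{r}}$ of Theorem \ref{mmmain thm} can be invoked.

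For case (1), where $\alpha_1 \geq 0$, I first apply Hölder's inequality with a conjugate pair $(a, a')$ chosen so that $r_1 < r_1 a' < p_1$ (possible since $r_1 < p_1$), giving
\[
\langle |b-\langle b\rangle_Q|^{k_1}|f_1|\rangle_{r_1,Q} \leq \langle |b-\langle b\rangle_Q|\rangle_{k_1 r_1 a,Q}^{k_1}\, \langle |f_1|\rangle_{r_1 a',Q}.
\]
Since $\nu_1 \in A_{\infty}$, a weighted John–Nirenberg inequality for $BMO_{\nu_1}^{\alpha_1}$ controls the first factor by $\|b\|_{BMO_{\nu_1}^{\alpha_1}}^{k_1}\,\langle \nu_1\rangle_Q^{k_1(1+\alpha_1)}$. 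Substituting back produces a modified sparse form with an extra factor $\langle \nu_1\rangle_Q^{k_1(1+\alpha_1)}$. Using Lemmas \ref{key lem1} and \ref{key lem2} combined with the identity above, this extra factor is reabsorbed so that the resulting sparse form fits the hypothesis of Theorem \ref{mmmain thm} with weights $(\lambda_1, \omega_2)$, and applying that theorem delivers the desired bound.

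For case (2), where $\alpha_1 < 0$, the class $BMO_{\nu_1}^{\alpha_1}$ is no longer the right space. Instead I invoke Lemma \ref{key lem3} (sparse domination of $|b - \langle b\rangle_Q^{\sigma}|$ with respect to a suitable $A_\infty$ weight $\sigma$) to replace $|b - \langle b\rangle_Q|$ by a sparse sum whose atoms are essentially $M_{\nu_1}^\#(b)$ averages. A Hölder inequality in $\ell^{t_1}$ with $t_1 = -1/\alpha_1 > 0$ then extracts the factor $\|M_{\nu_1}^\#(b)\|_{L^{t_1}(\nu_1)}^{k_1}$, and the remaining sparse form is handled exactly as in case (1) via Theorem \ref{mmmain thm}.

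The main obstacle is the careful bookkeeping of exponents in the Hölder steps. One must arrange simultaneously that the exponent $k_1 r_1 a$ on the $b$-factor is admissible for the weighted John–Nirenberg inequality, that the exponent $r_1 a'$ on $f_1$ stays strictly below $p_1$ so that Theorem \ref{mmmain thm} continues to apply, and that the intermediate weight factor $\langle \nu_1\rangle_Q^{k_1(1+\alpha_1)}$ is reabsorbed via the reverse-Hölder lemmas without worsening the final constant beyond $C(\omega_1, \omega_2, \lambda_1)$. In case (2) there is the further subtlety of combining the sparse domination of $|b - \langle b\rangle|$ with the $L^{t_1}(\nu_1)$ norm of $M_{\nu_1}^\#(b)$ through a Hölder inequality in $\ell^{t_1}$, mirroring the technique already used in the proof of part (2) of Theorem \ref{thm 2}.
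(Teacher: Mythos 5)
Your plan diverges from the paper's proof at the very first step, and the divergence creates a genuine gap. In case (1) you apply H\"older to split the average $\langle |b-\langle b\rangle_Q|^{k_1}|f_1|\rangle_{r_1,Q}$ into a $b$-factor and an $f_1$-factor, then invoke a ``weighted John--Nirenberg inequality'' for $BMO_{\nu_1}^{\alpha_1}$ to bound the $b$-factor by $\|b\|_{BMO_{\nu_1}^{\alpha_1}}^{k_1}\langle \nu_1\rangle_Q^{k_1(1+\alpha_1)}$. This inequality, at the exponent $k_1 r_1 a$ that your H\"older step creates, is not established in the paper and is not elementary: for a general $\nu_1\in A_\infty$ (let alone for the fractional class $BMO_\nu^\alpha$) the $L^m$ John--Nirenberg for weighted BMO at arbitrary $m$ is precisely the kind of self-improvement whose proof rests on the same sparse domination of $|b-\langle b\rangle_Q|$ that the paper deploys directly. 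Invoking it as a black box begs the question rather than answers it, and you have not supplied a proof or a citation.

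There is a second, independent problem with the H\"older separation. After your steps the $f_1$-average becomes $\langle |f_1|\rangle_{r_1 a',Q}$ with $a'>1$, i.e.\ a \emph{larger} exponent than $r_1$, whereas Theorem~\ref{mmmain thm} bounds a sparse form whose $f_1$-average is at the \emph{smaller} exponent $r_1/(1+\beta_1)$. By Jensen's inequality $\langle |f_1|\rangle_{r_1/(1+\beta_1),Q}\le \langle |f_1|\rangle_{r_1 a',Q}$, so the form you are left with majorizes the one that Theorem~\ref{mmmain thm} controls, and the theorem cannot simply be applied; you would need a strictly stronger sparse-form estimate. Relatedly, the surviving weight factor $\langle \nu_1\rangle_Q^{k_1(1+\alpha_1)}\simeq \langle \omega_1\lambda_1^{-1}\rangle_Q$ cannot just be ``reabsorbed'' by Lemmas~\ref{key lem1}--\ref{key lem2}: it needs to replace $\omega_1$ by $\lambda_1$ on $f_1$ \emph{inside} the $L^{r_1 a'}$-average, and that exchange is exactly what fails once $b$ and $f_1$ have been decoupled.

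The paper's proof avoids both problems by never separating $b-\langle b\rangle_Q$ from $f_1$. It first applies Lemma~\ref{key lem3} to dominate $|b-\langle b\rangle_Q|$ (to the first power, not $L^m$) by a sparse sum, then raises that sparse sum to the $r_1k_1$-th power and untangles it with the Carleson-type identities of Lemmas~\ref{lem 11} and~\ref{lem 12}, iterating $k-1=\lfloor k_1 r_1\rfloor-1$ times. This builds a chain of auxiliary weights $\zeta_i,\eta_i$ and exponents $u_i$ interpolating between $p_1$ and $q_1$, which is precisely the mechanism by which the weight on $f_1$ is gradually shifted from $\omega_1$ to $\lambda_1$ so that Theorem~\ref{mmmain thm} becomes applicable at the end. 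Your sketch contains none of this iterative bookkeeping; the claim that the remaining form ``fits the hypothesis of Theorem~\ref{mmmain thm}'' is asserted, not verified, and as explained above it does not fit as stated. Case~(2) in your write-up is vaguer still (``a H\"older inequality in $\ell^{t_1}$\dots the remaining sparse form is handled exactly as in case~(1)''), and inherits the same unresolved issues.
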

	\begin{proof}[Proof of Theorem \ref{mmain thm}]
			\textbf{The case  $p_1\leqslant q_1$.} By Lemma \ref{key lem3}, there exists a sparse collection $\mathcal{S}\subset \mathcal{S}^{\prime}\subset  \mathscr{D}$ such that for any $Q\in \mathcal{S}$,
		\begin{align}
			\big\langle {|b-\langle{b}\rangle_{Q}|^{k_1}|f_1|}\big\rangle_{r_1,Q}^{r_1} &\lesssim \frac{1}{|Q|}\int_{Q} \Big(\sum_{P\in \mathcal{S}^\prime:P\subseteq Q}\frac{1}{|P|}\int_{P}|b-\langle b \rangle_{P}|\chi_{P}\Big)^{r_1k_1}|f_1|^{r_1}\nonumber\\
			&\leq \|b\|^{r_1k_1}_{BMO_{\nu_1}^{\alpha_1 }}\frac{1}{|Q|}\int_{Q} \Big(\sum_{P\in \mathcal{S}^\prime:P\subseteq Q}\frac{\nu_1(P)^{1+\alpha_1}}{|P|}\chi_P\Big)^{r_1k_1}|f_1|^{r_1}\nonumber.
		\end{align}
		Let $k:=\lfloor k_1r_1 \rfloor $ and $\gamma:=r_1k_1-(k-1)$. Applying Lemma \ref{lem 11} $(k-1)$ times, we have
		\begin{align}
			\int_{Q} \Big(\sum_{P\in \mathcal{S}^\prime:P\subseteq Q}\frac{\nu_1(P)^{1+\alpha_1}}{|P|}\chi_P\Big)^{r_1k_1}|f_1|^{r_1}
			\lesssim&\sum_{\substack{P_{k-1}\subseteq \cdots \subseteq P_1\subseteq Q\\P_1,\dots,P_{k}\in\mathcal{S}^\prime}}\frac{\nu_1(P_1)^{1+\alpha_1}}{|P_1|}\cdots\frac{\nu_1(P_{k-1})^{1+\alpha_1}}{|P_{k-1}|}\nonumber\\
			&\quad\times\int_{P_{k-1}}\Big(\sum_{P_{k}\subseteq P_{k-1}} \frac{\nu_1(P_{k})^{1+\alpha_1}}{|P_{k}|}\chi_{P_{k}}\Big)^{\gamma}|f_1|^{r_1}\nonumber.
		\end{align}
		Then, we denote
			$$\mathcal{A}^{\zeta}_{\alpha,\mathcal{S}^\prime}(\varphi):=\sum_{Q \in \mathcal{S}^\prime}\langle |\varphi| \rangle^{\zeta}_{\frac{1}{1+\alpha},Q}\zeta(Q)^{\alpha}\chi_{Q}.$$
		 We omit $\alpha$, when $\alpha=0$. Let $\zeta_1,\eta_1,\eta_2\in A_{\infty}$, which will be determined later. Using Lemma  \ref{lem 12} and Lemma \ref{key lem1}, we obtain
		\begin{align}
			\int_{P_{k-1}}&\Big(\sum_{P_{k}\subseteq P_{k-1}} \frac{\nu_1(P_{k})^{1+\alpha_1}}{|P_{k}|}\chi_{P_{k}}\Big)^{\gamma}|f_1|^{r_1}\nonumber\\
			&\simeq\sum_{P_{k}\subseteq P_{k-1}}\frac{\nu_1(P_{k})^{1+\alpha_1}}{|P_{k}|}\Big(\int_{P_k}|f_1|^{r_1}\Big)^{2-\gamma}\Big(\sum_{P\subseteq P_{k}}\frac{\nu_1(P)^{1+\alpha_1}}{|P|}\int_{P}|f_1|^{r_1}\Big)^{\gamma-1}\nonumber\\
			&\lesssim \sum_{P_{k}\subseteq P_{k-1}}\frac{\nu_1(P_{k})^{1+\alpha_1}}{|P_{k}|}\Big(\int_{P_k}|f_1|^{r_1}\Big)^{2-\gamma}\Big(\sum_{P\subseteq P_k}\nu_1\zeta_1^{\frac{1}{1+\alpha_1}}(P)^{1+\alpha_1}\langle|f_1|^{r_1}\zeta_1^{-1}\rangle_{P}^{\zeta_1}\Big)^{\gamma-1}\nonumber,
		\end{align}
		using Minkowski's inequality,
		\begin{align}
		\sum_{P\subseteq P_k}\nu_1\zeta_1^{\frac{1}{1+\alpha_1}}(P)&^{1+\alpha_1}\langle|f_1|^{r_1}\zeta_1^{-1}\rangle_{P}^{\zeta_1}=	\sum_{P\subseteq P_k}\Big(\int_{P}\big(\langle|f_1|^{r_1}\zeta_1^{-1}\rangle_{P}^{\zeta_1}\Big)^{\frac{1}{1+\alpha_1}}\nu_1\zeta_1^{\frac{1}{1+\alpha_1}}\Big)^{1+\alpha_1}\nonumber\\
		&\leq \Big(\int_{P_k}\Big(	\sum_{P\subseteq P_k}(\langle|f_1|^{r_1}\zeta_1^{-1}\rangle_{P}^{\zeta_1})\chi_{P}\nu_1^{1+\alpha_1}\zeta_1\Big)^{\frac{1}{1+\alpha_1}}\Big)^{1+\alpha_1}\nonumber\\
		&=\Big(\int_{P_k} \big(\mathcal{A}^{\zeta_1}_{\mathcal{S}^{\prime}}(f_1^{r_1}\cdot\zeta_1^{-1})\nu_1^{1+\alpha_1}\zeta_1\big)^{\frac{1}{1+\alpha_1}}\Big)^{(1+\alpha_1)}
		:=\Big(\int_{P_k} h_0^{\frac{1}{1+\alpha_1}}\Big)^{(1+\alpha_1)}\nonumber.
		\end{align}
		By a similar method as above
		\begin{align}
			\sum_{P_{k}\subseteq P_{k-1}}&\frac{\nu_1(P_{k})^{1+\alpha_1}}{|P_{k}|}\Big(\int_{P_k}|f_1|^{r_1}\Big)^{2-\gamma}\Big(\int_{P_k} h_0^{\frac{1}{1+\alpha_1}}\Big)^{(1+\alpha_1)(\gamma-1)}\nonumber\\
			&=\sum_{P_{k}\subseteq P_{k-1}}\frac{\nu_1(P_k)^{1+\alpha_1}\eta_1(P_k)^{2-\gamma}\eta_2(P_k)^{\gamma-1}}{|P_k|}\Big(\langle |f_1|^{r_1}\eta_1^{-1}\rangle_{P_k}^{\eta_1}\Big)^{2-\gamma}\nonumber\\
			&\quad\quad\quad\quad\quad\cdot\Big(\langle h_0\eta_2^{-(1+\alpha_1)}\rangle^{\eta_2}_{\frac{1}{1+\alpha_1},P_k}\Big)^{\gamma-1}|\eta_2(P_k)|^{\alpha_1(\gamma-1)}\nonumber\\
			&\lesssim \sum_{P_{k}\subseteq P_{k-1}}\nu_1\eta_1^{\frac{2-\gamma}{1+\alpha_1}}\eta_2^{\frac{\gamma-1}{1+\alpha_1}}(P_k)^{1+\alpha_1}\Big(\langle |f_1|^{r_1}\eta_1^{-1}\rangle_{P_k}^{\eta_1}\Big)^{2-\gamma}\nonumber\\
			&\quad\quad\quad\quad\quad\cdot\Big(\langle h_0\eta_2^{-(1+\alpha_1)}\rangle^{\eta_2}_{\frac{1}{1+\alpha_1},P_k}\Big)^{\gamma-1}|\eta_2(P_k)|^{\alpha_1(\gamma-1)}\nonumber\\
			&\lesssim \bigg(\int_{P_{k-1}}\Big(\sum_{P_k\subseteq P_{k-1}}\chi_{P_k}(\langle |f_1|^{r_1}\eta_1^{-1}\rangle_{P_k}^{\eta_1})^{2-\gamma}(\langle h_0\eta_2^{-(1+\alpha_1)}\rangle^{\eta_2}_{\frac{1}{1+\alpha_1},P_k})^{\gamma-1}\nonumber\\
			&\quad\quad\quad\quad\quad\cdot|\eta_2(P_k)|^{\alpha_1(\gamma-1)}\nu_1^{1+\alpha_1}\eta_1^{2-\gamma}\eta_2^{\gamma-1}\Big)^{\frac{1}{1+\alpha_1}}\bigg)^{1+\alpha_1}\nonumber.
		\end{align}
			 According to H\"older's inequality,
			 \begin{align}
			 	\Big(\int_{P_{k-1}}&\big(\sum_{P_k\subseteq P_{k-1}}\chi_{P_k}(\langle |f_1|^{r_1}\eta_1^{-1}\rangle_{P_k}^{\eta_1})^{2-\gamma}(\langle h_0\eta_2^{-(1+\alpha_1)}\rangle^{\eta_2}_{\frac{1}{1+\alpha_1},P_k})^{\gamma-1}\nonumber\\
			 	&\quad\quad\cdot|\eta_2(P_k)|^{\alpha_1(\gamma-1)}\nu_1^{1+\alpha_1}\eta_1^{2-\gamma}\eta_2^{\gamma-1}\big)^{\frac{1}{1+\alpha_1}}\Big)^{1+\alpha_1}\nonumber\\
			 	\leq&\Big(\int_{P_{k-1}}\big(\mathcal{A}^{\eta_1}_{\mathcal{S}^\prime}(f_1^{r_1}\cdot\eta_1^{-1})^{2-\gamma}\mathcal{A}^{\eta_2}_{\alpha_1,\mathcal{S}^\prime}(h_0\cdot\eta_2^{-(1+\alpha_1)})^{\gamma-1}\nu_1^{1+\alpha_1}\eta_1^{2-\gamma}\eta_2^{\gamma-1}\big)^{\frac{1}{1+\alpha_1}}\Big)^{1+\alpha_1}\nonumber\\
			 	:=&\Big(\int_{P_{k-1}}h_1^{\frac{1}{1+\alpha_1}}\Big)^{1+\alpha_1}\nonumber,
			 \end{align}
			 Let $\zeta_2,\dots,\zeta_{k}\in A_{\infty}$ which will be determined later. Using Lemma \ref{key lem1} and Minkowski's inequality, we have
		\begin{align}
			\sum_{P_{k-1}\subseteq P_{k-2}}&\frac{\nu_1(P_{k-1})^{1+\alpha_1}}{|P_{k-1}|} \Big(\int_{P_{k-1}}h_1^{\frac{1}{1+\alpha_1}}\Big)^{1+\alpha_1}\nonumber\\
			=&\sum_{P_{k-1}\subseteq P_{k-2}}\frac{\nu_1(P_{k-1})^{1+\alpha_1}\zeta_2(P_{k-1})^{1+\alpha_1}}{|P_{k-1}|} \Big(\frac{1}{\zeta_2(P_{k-1})}\int_{P_{k-1}}h_1^{\frac{1}{1+\alpha_1}}\zeta_2^{-1}\zeta_2\Big)^{1+\alpha_1}\nonumber\\
			\lesssim&\sum_{P_{k-1}\subseteq P_{k-2}} \nu_1\zeta_2^{\frac{1}{1+\alpha_1}}(P_{k-1})^{1+\alpha_1}\langle h_1 \zeta_2^{-(1+\alpha_1)}\rangle^{\zeta_2}_{\frac{1}{1+\alpha_1},P_{k-1}}\zeta_2(P_{k-1})^{\alpha_1}\nonumber\\
			\leq &\Big( \int_{P_{k-2}}\big(\mathcal{A}^{\zeta_2}_{\alpha_1,\mathcal{S}^\prime}(h_1\zeta_2^{-(1+\alpha_1)})\nu_1^{1+\alpha_1}\zeta_2\big)^{\frac{1}{1+\alpha_1}}\Big)^{1+\alpha_1}
			:=\Big(\int_{P_{k-2}}h_2^{\frac{1}{1+\alpha_1}}\Big)^{1+\alpha_1}\nonumber,
		\end{align}
	    denote
		$$h_i:=\mathcal{A}^{\zeta_i}_{\alpha_1,\mathcal{S}^\prime}(h_{i-1}\zeta_i^{-(1+\alpha_1)})\nu_1^{1+\alpha_1}\zeta_i,\quad\quad i=2,\dots,k.$$
	    By iterating this process we arrive at
		$$\int_{Q} \Big(\sum_{P\in \mathcal{S}^\prime:P\subseteq Q}\frac{\nu_1(P)^{1+\alpha_1}}{|P|}\chi_P\Big)^{r_1k_1}|f_1|^{r_1}\lesssim \Big(\int_Q h_{k}^{\frac{1}{1+\alpha_1}}\Big)^{1+\alpha_1}.$$
		Now, we conclude
		\begin{align}
			&\sum_{Q\in \mathcal{S}}\big\langle {|b-\langle{b}\rangle_{Q}|^{k_1}|f_1|}\big\rangle_{r_1,Q}\langle |f_2|\rangle_{r_2,Q}\langle |g|\rangle_{s^\prime,Q}|Q|\nonumber\\
			&\quad\quad\quad\quad\lesssim\|b\|^{k_1}_{BMO_{\nu_1}^{\alpha_1 }}\sum_{Q\in \mathcal{S}} \langle h_{k}^{\frac{1}{r_1}}\rangle_{\frac{r_1}{1+\alpha_1},Q}\langle f_2 \rangle_{r_2,Q} \langle g\rangle_{s^\prime,Q} |Q|^{1+\frac{\alpha_1}{r_1}},\nonumber
		\end{align}
		by Theorem \ref{mmmain thm}, let ${1}/{u_1}={1}/{q_1}+{\alpha_1}/{r_1}$, implies
		\begin{align}
			\sum_{Q\in \mathcal{S}}&\big\langle {|b-\langle{b}\rangle_{Q}|^{k_1}|f_1|}\big\rangle_{Q}\langle |f_2|\rangle_{Q}\langle |g|\rangle_{Q}|Q|\nonumber\\
			&\quad\quad\lesssim\|b\|^{k_1}_{BMO_{\nu_1}^{\alpha_1 }}\|h_{k}^{\frac{1}{r_1}}\lambda_1\|_{L^{u_1}}\|f_2\omega_2\|_{L^{p_2}}\|g(\lambda_1\omega_2)^-1{}\|_{L^{q^\prime}}\nonumber,
		\end{align}
		For term $\|h_{k}\lambda_1^{r_1}\|_{L^{u_1/r_1}}$,  define 
		$$\frac{1}{u_i}:=\frac{1}{q_1}+i\frac{\alpha_1}{r_1}=\frac{i}{r_1k_1p_1}+(1-\frac{i}{r_1k_1})\frac{1}{q_1},\quad i=1,\dots,k,$$
		and we need
		\begin{enumerate}
			\item  $(\nu_1^{1+\alpha_1}\zeta_k)^{\frac{u_1}{r_1}}\lambda_1^{u_1}=\zeta_k$;
			\item 
			$(\nu_1^{1+\alpha_1}\zeta_{i-1}\zeta_i^{-(1+\alpha_1)})^{\frac{u_{k+2-i}}{r_1}}\zeta_i=\zeta_{i-1},\quad i=3,\dots,k$.
		\end{enumerate}
		These give us
		\begin{align}
		\zeta_{k}&=\Big(\nu_1^{(1+\alpha_1)}\lambda_1^{r_1}\Big)^{\frac{u_1}{r_1-u_1}}=\Big(\lambda_1^{\frac{r_1k_1-1}{k_1}}\omega_1^{\frac{1}{k_1}}\Big)^{\frac{u_1}{r_1-u_1}}\nonumber\\
		&=\Big(\lambda_1^{\frac{r_1q_1}{r_1-q_1}\frac{(r_1-q_1)(r_1k_1-1)}{k_1r_1q_1}}\omega_1^{\frac{r_1p_1}{r_1-p_1}\frac{r_1-p_1}{k_1r_1p_1}}\Big)^{\frac{u_1}{r_1-u_1}}\nonumber,\nonumber
		\end{align}
		since $$\frac{(r_1-q_1)(r_1k_1-1)}{k_1r_1q_1}+\frac{r_1-p_1}{k_1r_1p_1}=\frac{r_1}{q_1}-1+\alpha_1=\frac{r_1-u_1}{u_1},$$
		we have $\zeta_k\in A_\infty$. Formula (2) gives us  
		\begin{align}
			\zeta_{k-1}&=(\nu_1^{1+\alpha_1})^{\frac{u_2}{r_1-u_2}}(\zeta_k^{1-(1+\alpha_1)\frac{u_2}{r_1}})^{\frac{r_1}{r_1-u_2}}\nonumber\\
			&=(\lambda_1^{-\frac{r_1q_1}{r_1-q_1}\frac{r_1-q_1}{r_1q_1k_1}}\omega_1^{\frac{r_1p_1}{r_1-p_1}\frac{r_1-p_1}{r_1p_1k_1}})^{\frac{u_2}{r_1-u_2}}(\zeta_k^{1-(1+\alpha_1)\frac{u_2}{r_1}})^{\frac{r_1}{r_1-u_2}}\nonumber,
		\end{align}
		since 
		$$-\frac{r_1-q_1}{r_1q_1k_1}+\frac{r_1-p_1}{r_1p_1k_1}=\frac{1}{p_1k_1}-\frac{1}{q_1k_1}=\alpha_1,$$
		and
		$$\frac{\alpha_1u_2}{r_1-u_2}+\frac{r_1}{r_1-u_2}-(1+\alpha_1)\frac{u_2}{r_1-u_2}=1,$$
		we have $\zeta_{k-1}\in A_\infty$. Using the same method, we get $\zeta_2,\dots,\zeta_{k-2}\in A_{\infty}$. 
		Now, we use Lemma \ref{lem 13} $(k-1)$ times to obtain
		\begin{align}
			\|h_{k}&\|_{L^{u_1/r_1}(\lambda_1^{u_1})}=\|\mathcal{A}^{\zeta_{k}}_{\alpha_1,\mathcal{S}^\prime}(h_{k-1}\zeta_{k}^{-(1+\alpha_1)})\nu_1^{1+\alpha_1}\zeta_{k}\|_{L^{u_1/r_1}(\lambda_1^{u_1})}\nonumber\\
			&=\|\mathcal{A}^{\zeta_{k}}_{\alpha_1,\mathcal{S}^\prime}(h_{k-1}\zeta_{k}^{-(1+\alpha_1)})\|_{L^{u_1/r_1}(\zeta_{k})}
			\lesssim \|h_{k-1}\zeta_{k}^{-(1+\alpha_1)}\|_{L^{u_2/r_1}(\zeta_{k})}\nonumber\\
			&\lesssim\cdots
			\lesssim\|h_1\zeta_2^{-(1+\alpha_1)}\|_{L^{u_{{k_1}}}(\zeta_2)}\nonumber\\
			&=\|\big(\mathcal{A}^{\eta_1}_{\mathcal{S}^\prime}(f_1^{r_1}\cdot\eta_1^{-1})^{2-\gamma}\mathcal{A}^{\eta_2}_{\alpha_1,\mathcal{S}^\prime}(h_0\cdot\eta_2^{-(1+\alpha_1)})^{\gamma-1}\nu_1^{1+\alpha_1}\eta_1^{2-\gamma}\eta_2^{\gamma-1}\zeta_2^{-(1+\alpha_1)}\|_{L^{u_k/r_1}(\zeta_2)}\nonumber.
		\end{align}
		For $$\frac{1}{u_k}=\frac{1}{q_1}+(r_1k_1-(\gamma-1))\frac{\alpha_1}{r_1}=(2-\gamma)\frac{1}{p_1}+(\gamma-1)\frac{1}{v},$$ 
		where ${1}/{v}:={1}/{p_1}-{\alpha_1}/{r_1}$, we use H\"older's inequality
		\begin{align}
			\|&\mathcal{A}^{\eta_1}_{\mathcal{S}^\prime}(f_1^{r_1}\cdot\eta_1^{-1})^{2-\gamma}\mathcal{A}^{\eta_2}_{\alpha_1,\mathcal{S}^\prime}(h_0\cdot\eta_2^{-(1+\alpha_1)})^{\gamma-1}\nu_1^{1+\alpha_1}\eta_1^{2-\gamma}\eta_2^{\gamma-1}\zeta_2^{-(1+\alpha_1)}\|_{L^{u_k/r_1}(\zeta_2)}\nonumber\\
			\leqslant&\|\mathcal{A}^{\eta_1}_{\mathcal{S}^\prime}(f_1^{r_1}\cdot\eta_1^{-1})\|_{L^{p_1/r_1}(\eta_1)}^{2-\gamma}\|\mathcal{A}^{\eta_2}_{\alpha_1,\mathcal{S}^\prime}(h_0\cdot\eta_2^{-(1+\alpha_1)})\lambda_1^{\frac{1}{k_1}}\omega_1^{\frac{k}{k_1(\gamma-1)}}\eta_2\eta_1^{(\frac{2-\gamma}{\gamma-1})(1-\frac{r_1}{p_1})}\|_{L^{\nu/r_1}}^{\gamma-1}.\nonumber
		\end{align}
		For the first term, let $\eta_1=\omega_1^{\frac{p_1r_1}{r_1-p_1}}\in A_{\infty}$, and use the $L^{p_1}(\eta_1)$ boundedness of $\mathcal{A}^{\eta_1}_{\mathcal{S}^\prime}$ we have
		$$\|\mathcal{A}^{\eta_1}_{\mathcal{S}^\prime}(f_1^{r_1}\cdot\eta_1^{-1})\|_{L^{p_1/r_1}(\eta_1)}^{2-\gamma}\leqslant\|f_1\omega_1\|_{L^{p_1}}^{r_1(2-\gamma)}.$$
		For the second term, let 
		$$\Big(\lambda_1^{\frac{1}{k_1}}\omega_1^{\frac{k}{k_1(\gamma-1)}}\eta_2\eta_1^{(\frac{2-\gamma}{\gamma-1})(1-\frac{r_1}{p_1})}\Big)^{\frac{\nu}{r_1}}=\eta_2,$$
		we have
		\begin{align}
			\eta_2=\lambda_1^{\frac{\nu}{k_1(r_1-\nu)}}\omega_1^{(r_1-\frac{1}{k_1})\frac{\nu}{r_1-\nu}}=\Big(\lambda_1^{\frac{r_1q_1}{r_1-q_1}(\frac{r_1-q_1}{r_1q_1k_1})}\omega_1^{\frac{r_1p_1}{r_1-p_1}\frac{r_1-p_1}{r_1p_1}(r_1-\frac{1}{k_1})}\Big)^{\frac{\nu}{r_1-\nu}},\nonumber
		\end{align}
		since
		$$\frac{1}{q_1k_1}-\frac{1}{r_1k_1}+\frac{r_1-p_1}{p_1}-\frac{1}{p_1k_1}+\frac{1}{k_1r_1}=\frac{r_1}{p_1}-1-\alpha_1=\frac{r_1-\nu}{\nu},$$
		 then $\eta_2\in A_{\infty}$.
		Let 
		$$(\eta_2^{1+\alpha_1}\nu_1^{1+\alpha_1}\zeta_1)^{\frac{p_1}{r_1}}\eta_2=\zeta_1,$$
		we have $\zeta_1=\omega_1^{\frac{p_1r_1}{r_1-p_1}}\in A_{\infty}$. Thus
		\begin{align}
		\|\mathcal{A}&^{\eta_2}_{\alpha_1,\mathcal{S}^\prime}(h_0\cdot\eta_2^{-(1+\alpha_1)})\lambda_1^{\frac{1}{k_1}}\omega_1^{\frac{k}{k_1(\gamma-1)}}\eta_2\eta_1^{(\frac{2-\gamma}{\gamma-1})(1-\frac{r_1}{p_1})}\|_{L^{\nu/r_1}}^{\gamma-1}\nonumber\\
		=&\|\mathcal{A}^{\eta_2}_{\alpha_1,\mathcal{S}^\prime}(h_0\cdot\eta_2^{-(1+\alpha_1)})\|_{L^{\nu/r_1}(\eta_2)}^{\gamma-1}
		\lesssim \|h_0\cdot\eta_2^{-(1+\alpha_1)}\|_{L^{p_1/r_1}(\eta_2)}^{\gamma-1}\nonumber\\
		=&\|\mathcal{A}^{\zeta_1}_{\mathcal{S}^\prime}(f_1^{r_1}\cdot \zeta_1^{-1})\|_{L^{p_1/r_1}(\zeta_1)}^{\gamma-1}
		\lesssim\|f_1\omega_1\|_{L^{p_1}}^{r_1(\gamma-1)}.\nonumber
		\end{align}
		Combining the arguments in the above
		$$\|h_k^{\frac{1}{r_1}}\|_{L^{u_1}(\lambda_1^{u_1})}^{r_1}=\|h_{k}\|_{L^{u_1/r_1}(\lambda_1^{u_1})}\lesssim \|f_1\omega_1\|^{r_1}_{L^{p_1}}.$$
			\textbf{The case  $q_1<p_1$.} By Lemma \ref{key lem3}, there exists a sparse collection $\mathcal{S}\subset \mathcal{S}^{\prime}\subset  \mathscr{D}$ such that for any $Q\in \mathcal{S}$,
		\begin{align}
			\big\langle {|b-\langle{b}\rangle_{Q}|^{k_1}|f_1|}\big\rangle_{r_1,Q}^{r_1} &\lesssim \frac{1}{|Q|}\int_{Q} \Big(\sum_{P\in \mathcal{S}^\prime:P\subseteq Q}\frac{1}{|P|}\int_{P}|b-\langle b \rangle_{P}|\chi_{P}\Big)^{r_1k_1}|f_1|^{r_1}\nonumber\\
			&= \frac{1}{|Q|}\int_{Q} \Big(\sum_{P\in \mathcal{S}^\prime:P\subseteq Q}\frac{\nu_1(P)}{|P|}\Omega_{\nu_1}(b,P)\chi_P\Big)^{r_1k_1}|f_1|^{r_1}\nonumber.
		\end{align}
	  Let $k:=\lfloor k_1r_1 \rfloor $ and $\gamma:=r_1k_1-(k-1)$. Applying Lemma \ref{lem 11} $(k-1)$ times, we have
		\begin{align}
			\int_{Q}& \Big(\sum_{P\in \mathcal{S}^\prime:P\subseteq Q}\frac{\nu_1(P)}{|P|}\Omega_{\nu_1}(b,P)\chi_P\Big)^{r_1k_1}|f_1|^{r_1}\nonumber\\
			&\lesssim\sum_{\substack{P_{k-1}\subseteq \cdots \subseteq P_1\subseteq Q\\P_1,\dots,P_{k}\in\mathcal{S}^\prime}}\frac{\nu_1(P_1)}{|P_1|}\Omega_{\nu_1}(b,P_1)\cdots\frac{\nu_1(P_{k-1})}{|P_{k-1}|}\Omega_{\nu_1}(b,P_{k-1})\nonumber\\
			&\quad\quad\quad\cdot\int_{P_{k-1}}\Big(\sum_{P_{k}\subseteq P_{k-1}} \frac{\nu_1(P_{k})}{|P_{k}|}\Omega_{\nu_1}(b,P_{k})\chi_{P_{k}}\Big)^{\gamma}|f_1|^{r_1}\nonumber.
		\end{align}
			Then, we denote
		$$\mathcal{A}^{\zeta}_{\mathcal{S}^\prime}(\varphi):=\sum_{Q \in \mathcal{S}^\prime}\langle |\varphi| \rangle^{\zeta}_{Q}\chi_{Q},\quad \mathcal{A}^{\zeta}_{\mathcal{S}^\prime,b}(\varphi):=\mathcal{A}^{\zeta}_{\mathcal{S}^\prime}(\varphi)M_{\nu_1}^\#(b).$$
	   Similar to the previous method. Let $\zeta_1,\eta_1,\eta_2\in A_{\infty}$, which will be determined later. Using Lemma \ref{lem 12}, Lemma \ref{key lem1} and Minkowski's inequality, we obtain
		\begin{align}
			\int_{P_{k-1}}&\Big(\sum_{P_{k}\subseteq P_{k-1}} \frac{\nu_1(P_{k})}{|P_{k}|}\Omega_{\nu_1}(b,P_k)\chi_{P_{k}}\Big)^{\gamma}|f_1|^{r_1}\nonumber\\
			\simeq&\sum_{P_{k}\subseteq P_{k-1}}\frac{\nu_1(P_{k})}{|P_{k}|}\Omega_{\nu_1}(b,P_k)\Big(\int_{P_k}|f_1|^{r_1}\Big)^{2-\gamma}\Big(\sum_{P\subseteq P_{k}}\frac{\nu_1(P)}{|P|}\Omega_{\nu_1}(b,P)\int_{P}|f_1|^{r_1}\Big)^{\gamma-1}\nonumber\\
			\lesssim& \sum_{P_{k}\subseteq P_{k-1}}\frac{\nu_1(P_{k})}{|P_{k}|}\Omega_{\nu_1}(b,P_k)\Big(\int_{P_k}|f_1|^{r_1}\Big)^{2-\gamma}\Big(\int_{P_k}\mathcal{A}_{\mathcal{S}^{\prime},b}^{\zeta_1}(f^{r_1}\zeta_1^{-1})\zeta_1\nu_1\Big)^{\gamma-1}\nonumber\\
			:=& \sum_{P_{k}\subseteq P_{k-1}}\frac{\nu_1(P_{k})}{|P_{k}|}\Omega_{\nu_1}(b,P_k)\Big(\int_{P_k}|f_1|^{r_1}\Big)^{2-\gamma}\Big(\int_{P_k}h_0\Big)^{\gamma-1}\nonumber.
			\end{align}
	    	By a similar method as above and H\"older's inequality	
	     \begin{align}
	     	& \sum_{P_{k}\subseteq P_{k-1}}\frac{\nu_1(P_{k})}{|P_{k}|}\Omega_{\nu_1}(b,P_k)\Big(\int_{P_k}|f_1|^{r_1}\Big)^{2-\gamma}\Big(\int_{P_k}h_0\Big)^{\gamma-1}\nonumber\\
	     	&\quad=\sum_{P_{k}\subseteq P_{k-1}}\frac{\nu_1(P_{k})\eta_1(P_k)^{2-\gamma}\eta_2(P_k)^{\gamma-1}}{|P_{k}|}\Omega_{\nu_1}(b,P_k)\Big(\Big\langle|f_1|^{r_1}\eta_1^{-1}\Big\rangle^{\eta_1}_{P_k}\Big)^{2-\gamma}\Big(\Big\langle|h_0|\eta_2^{-1}\Big\rangle^{\eta_1}_{P_k}\Big)^{\gamma-1}\nonumber\\
	     	&\quad\lesssim\sum_{P_{k}\subseteq P_{k-1}}\Omega_{\nu_1}(b,P_k)\Big(\Big\langle|f_1|^{r_1}\eta_1^{-1}\Big\rangle^{\eta_1}_{P_k}\Big)^{2-\gamma}\Big(\Big\langle|h_0|\eta_2^{-1}\Big\rangle^{\eta_1}_{P_k}\Big)^{\gamma-1}\nu_1\eta_1^{2-\gamma}\eta_2^{\gamma-1}(P_k)\nonumber\\
	     	&\quad\leq\int_{P_{k-1}}\sum_{P_{k}\subseteq P_{k-1}}\Omega_{\nu_1}(b,P_k)\Big(\Big\langle|f_1|^{r_1}\eta_1^{-1}\Big\rangle^{\eta_1}_{P_k}\Big)^{2-\gamma}\Big(\Big\langle|h_0|\eta_2^{-1}\Big\rangle^{\eta_1}_{P_k}\Big)^{\gamma-1}\chi_{P_k}\nu_1\eta_1^{2-\gamma}\eta_2^{\gamma-1}\nonumber\\
	     	&\quad\leq\int_{P_{k-1}}\mathcal{A}_{\mathcal{S}^\prime}^{\eta_1}(f_1^{r_1}\cdot\eta_1^{-1})^{2-\gamma}\mathcal{A}_{\mathcal{S}^\prime}^{\eta_2}(h_0\cdot\eta_2^{-1})^{\gamma-1}M_{\nu_1}^\#(b)\nu_1\eta_1^{2-\gamma}\eta_2^{\gamma-1}
	     	:=\int_{P_{k-1}}h_1\nonumber.	
	     \end{align}
	     for $i=2,\dots,k$, define
		$$h_i:=\mathcal{A}^{\zeta_i}_{\mathcal{S}^\prime}(h_{i-1}\zeta_i^{-1})M_{\nu_1} ^\#(b)\nu_1\zeta_i,$$
		and let $\zeta_2,\dots,\zeta_{k}\in A_{\infty}$ which will be determined later.
		\begin{align}
			\sum_{P_{k-1}\subseteq P_{k-2}}&\frac{\nu_1(P_{k-1})}{|P_{k-1}|}\Omega_{\nu_1}(b,P_{k-1})\int_{P_{k-1}}h_1\nonumber\\
			\lesssim&\int_{P_{k-2}} \sum_{P_{k-1}\subseteq P_{k-2}}\langle h_1\zeta_2^{-1}\rangle_{P_{k-1}}^{\zeta_2}\chi_{P_{k-1}}M_{\nu_1}^\#(b)\nu_1\zeta_2:=\int_{P_{k-1}}h_2\nonumber.
		\end{align}
		 By iterating this process we arrive at
		$$\int_{Q} \Big(\sum_{P\in \mathcal{S}^\prime:P\subseteq Q}\frac{\nu_1(P)}{|P|}\Omega_{\nu_1}(b,P)\chi_P\Big)^{\gamma}|f_1|^{r_1}=\frac{1}{|Q|}\int_{Q}h_{k}.$$
		Now, we conclude
		$$\sum_{Q\in \mathcal{S}}\big\langle {|b-\langle{b}\rangle_{Q}|^{k_1}|f_1|}\big\rangle_{r_1,Q}\langle |f_2|\rangle_{r_2,Q}\langle |g|\rangle_{s^\prime,Q}|Q|
		\lesssim\sum_{Q\in \mathcal{S}} \langle h_{k}^{\frac{1}{r_1}}\rangle_{r_1,Q}\langle f_2 \rangle_{r_2,Q} \langle g\rangle_{s^\prime,Q} |Q|,$$
		by Theorem \ref{mmmain thm}, we have
		\begin{align}
			\sum_{Q\in \mathcal{S}}&\big\langle {|b-\langle{b}\rangle_{Q}|^{k_1}|f_1|}\big\rangle_{r_1,Q}\langle |f_2|\rangle_{r_2,Q}\langle |g|\rangle_{s^\prime,Q}|Q|\nonumber\\
			&\quad\quad\lesssim\|h_{k}^{\frac{1}{r_1}}\lambda_1\|_{L^{q_1}}\|f_2\omega_2\|_{L^{p_2}}\|g(\lambda_1\omega_2)^{-1}\|_{L^{q^\prime}}\nonumber,
		\end{align}
	For term $\|h_{k}\|_{L^{q_1/r_1}(\lambda_1^{q_1})}$,  define 
	$$\frac{1}{u_i}:=\frac{1}{q_1}-\frac{i}{t_1r_1}=\frac{i}{r_1k_1p_1}+(1-\frac{i}{r_1k_1})\frac{1}{q_1},\quad i=1,\dots,k,$$
	and we need
		\begin{enumerate}
		\item  $(\nu_1^{1+\alpha_1}\zeta_k)^{\frac{u_1}{r_1}}\lambda_1^{u_1}=\zeta_k$;
		\item 
		$(\nu_1^{1+\alpha_1}\zeta_{i-1}\zeta_i^{-(1+\alpha_1)})^{\frac{u_{k+2-i}}{r_1}}\zeta_i=\zeta_{i-1},\quad i=3,\dots,k$.
	\end{enumerate}
	The same as the previous case. These give us $\zeta_i\in A_{\infty}$, 
	for $i=2,\dots,k$. Applying H\"older's inequality and $L^{u_i}(\zeta_k)$ boundedness of $\mathcal{A}^{\zeta_k}_{\mathcal{S}^\prime}$, we get
		\begin{align}
			\|h_{k}\|_{L^{q_1/r_1}(\lambda_1^{q_1})}&=\|\mathcal{A}^{\zeta_{k}}_{\mathcal{S}^\prime}(h_{k-1}\zeta_{k}^{-1})M_{\nu_1} ^\#(b)\nu_1\zeta_{k}\|_{L^{q_1/r_1}(\lambda_1^{q_1})}\nonumber\\
			&\leq \|M_{\nu_1} ^\#(b)\|_{L^{t_1}(\nu_1)}\|\mathcal{A}^{\zeta_{k}}_{\mathcal{S}^\prime}(h_{k-1}\zeta_{k}^{-1})\|_{L^{u_1/r_1}(\zeta_{k})}\nonumber\\
			&\leq \cdots
			\leq\|M_{\nu_1} ^\#(b)\|_{L^{t_1}(\nu_1)}^{k-1}\|h_1\zeta_2^{-1}\|_{L^{u_{k-1}}(\zeta_2)}\nonumber.
		\end{align}
		Now, define ${1}/{v}={1}/{p_1}+{1}/{r_1t_1}$ and
		$$\frac{1}{u_{k-1}}=\frac{1}{q_1}-\frac{k-1}{r_1t_1}=\frac{1}{r_1t_1}+(2-\gamma)\frac{1}{p_1}+(\gamma-1)\frac{
		1}{v},$$
		we can estimate by H\"older's inequality,
		\begin{align}
		\|h_1&\zeta_2^{-1}\|_{L^{u_{k-1}}(\zeta_2)}\nonumber\\
		=&\|\mathcal{A}_{\mathcal{S}^\prime}^{\eta_1}(f_1^{r_1}\cdot\eta_1^{-1})^{2-\gamma}\mathcal{A}_{\mathcal{S}^\prime}^{\eta_2}(h_0\cdot\eta_2^{-1})^{\gamma-1}M_{\nu_1}^\#(b)\nu_1\eta_1^{2-\gamma}\eta_2^{\gamma-1}\zeta_2^{-1}\zeta_2^{\frac{r_1}{u_{k-1}}}\|_{L^{u_{k-1}/r_1}}\nonumber\\
		\leqslant& \|M_{\nu_1}^\#(b)\|_{L^{t_1}(\nu_1)}\|\mathcal{A}_{\mathcal{S}^\prime}^{\eta_1}(f_1^{r_1}\cdot\eta_1^{-1})\|^{2-\gamma}_{L^{p_1/r_1}(\eta_1)}\nonumber\\
		&\quad\quad\quad\quad\quad\cdot\|\mathcal{A}_{\mathcal{S}^\prime}^{\eta_2}(h_0\cdot\eta_2^{-1})\nu_1^{(1+\alpha_1)\frac{1}{\gamma-1}}\eta_1^{(\frac{2-\gamma}{\gamma-1})(1-\frac{r_1}{p_1})}\eta_2\zeta_2^{(\frac{r_1}{u_{k-1}}-1)\frac{1}{\gamma-1}}\|_{L^{v/r_1}}^{\gamma-1}\nonumber.
		\end{align}
		For the second term, let $\eta_1=\omega_1^{\frac{p_1r_1}{r_1-p_1}}\in A_{\infty}$, we have
		$$\|\mathcal{A}^{\eta_1}_{\mathcal{S}^\prime}(f_1^{r_1}\cdot\eta_1^{-1})\|_{L^{p_1/r_1}(\eta_1)}^{2-\gamma}\leqslant\|f_1\omega_1\|_{L^{p_1}}^{r_1(2-\gamma)}.$$
		For the third term, let 
		$$\Big(\nu_1^{(1+\alpha_1)\frac{1}{\gamma-1}}\eta_1^{(\frac{2-\gamma}{\gamma-1})(1-\frac{r_1}{p_1})}\eta_2\zeta_2^{(\frac{r_1}{u_{k-1}}-1)\frac{1}{\gamma-1}}\Big)^{\frac{\nu}{r_1}}=\eta_2,$$
		that is
		$$\eta_2=\lambda_1^{\frac{\nu}{k_1(r_1-\nu)}}\omega_1^{(r_1-\frac{1}{k_1})\frac{\nu}{r_1-\nu}}\in A_{\infty}.$$
		Let $\zeta_1=\omega_1^{\frac{p_1r_1}{r_1-p_1}}\in A_{\infty}$, we have
		\begin{align}
			\|\mathcal{A}&_{\mathcal{S}^\prime}^{\eta_2}(h_0\cdot\eta_2^{-1})\nu_1^{(1+\alpha_1)\frac{1}{\gamma-1}}\eta_1^{(\frac{2-\gamma}{\gamma-1})(1-\frac{r_1}{p_1})}\eta_2\zeta_2^{(\frac{r_1}{u_{k-1}}-1)\frac{1}{\gamma-1}}\|_{L^{v/r_1}}^{\gamma-1}\nonumber\\
			=&\|\mathcal{A}_{\mathcal{S}^\prime}^{\eta_2}(h_0\cdot\eta_2^{-1})\|_{L^{v/r_1}(\eta_2)}^{\gamma-1}
			\lesssim \|\mathcal{A}_{\mathcal{S}^{\prime},b}^{\zeta_1}(f^{r_1}\zeta_1^{-1})\zeta_1\nu_1\eta_2^{-1}\|_{L^{v/r_1}(\eta_2)}^{\gamma-1}\nonumber\\
			\leqslant& \|M_{\nu_1}^\#(b)\|_{L^{t_1}(\nu_1)}^{\gamma-1}\|\mathcal{A}_{\mathcal{S}^\prime}^{\zeta_1}(f_1^{r_1}\cdot\zeta_1^{-1})\omega_1^{r_1}\zeta_1\|_{L^{p_1/r_1}}^{\gamma-1}	\leqslant\|M_{\nu_1}^\#(b)\|_{L^{t_1}(\nu_1)}^{\gamma-1}\|f_1\omega_1\|_{L^{p_1}}^{r_1(\gamma-1)}.\nonumber
		\end{align}
		Combining the arguments in the above
		\begin{align}
			\|h_{k}^{\frac{1}{r_1}}\|_{L^{q_1}(\lambda_1^{q_1})}^{r_1}=&\|h_k\|_{L^{q_1/r_1}(\lambda_1^{q_1})}\nonumber\\
			\lesssim&\|M_{\nu_1}^\#(b)\|_{L^{t_1}(\nu_1)}^{r_1k_1}\|f_1\omega_1\|^{r_1}_{L^{p_1}}\nonumber,
		\end{align}
		and we have done.
	\end{proof}
	\begin{proof}[Proof of Theorem \ref{thm 10}] 
		Combining Theorem \ref{main thm} and Theorem \ref{mmain thm}, we arrive at Theorem \ref{thm 10}. The second summation term in (\ref{key formular6})
		$$\sum_{Q\in \mathcal{S}} \langle |f_1|\big\rangle_{r_1,Q}\big\langle |f_2|\big\rangle_{r_2,Q}\big\langle {|b-\langle{b}\rangle_Q|^{k_1}|g|}\big\rangle_{s^{\prime},Q}\big|Q|,$$
		can be handled similarly to Theorem \ref{mmain thm}. 
    \end{proof}
	
	\textbf{Acknowledgment.} 
	The author is supported by the National Key R\&D Program of China (Grant No. 2021YFA1002500). The author thanks Kangwei Li for his valuable  suggestions and Linfei Zheng  for his helpful discussion
	\bibliographystyle{plain}
	\bibliography{ref}
\end{document}